\documentclass[final,1p,times]{elsarticle}

\usepackage{amssymb}
\usepackage{amsmath} \usepackage{amsthm}
\usepackage{xcolor}
\usepackage{soul}

\usepackage[utf8]{inputenc}
\usepackage[english]{babel}
\usepackage{tikz}
\usetikzlibrary{shapes.geometric}
\usetikzlibrary{arrows.meta,positioning,fit,calc}

\usepackage{graphicx}
\usepackage{caption, subcaption, float}
\usepackage[colorlinks=true, allcolors=blue]{hyperref}
\usepackage{verbatim}
\usepackage{algpseudocode}
\usepackage[linesnumbered,ruled,vlined]{algorithm2e} % Options for style
\usepackage{soul}
\usepackage{enumitem}

\theoremstyle{plain}
\newtheorem{theorem}{Theorem}

\theoremstyle{definition}
\newtheorem{definition}{Definition}

\theoremstyle{remark}
\newtheorem{remark}{Remark}          % numbered
\newtheorem*{remark*}{Remark}        % unnumbered
\usepackage[colorlinks=true, allcolors=blue]{hyperref}
\usepackage{algorithmicx}
\usepackage{algcompatible}
\usepackage{multirow}
\usepackage{booktabs}

\newlist{computesteps}{description}{1}
\setlist[computesteps]{
    style=sameline,
    leftmargin=!,
    widest=Step 10.,
    labelsep=0.75em,
    font=\normalfont\ttfamily
}
\newcommand{\Step}[2]{\item[Step #1.] #2}

\newcommand{\bx}{\mathbf{x}}

\newcommand{\bv}{\mathbf{v}}
\newcommand{\bK}{\mathbf{K}}
\newcommand{\bc}{\mathbf{c}}

\journal{journal name}

\begin{document}
\begin{frontmatter}

\title{A Parallel and Adaptive Mesh-Free Method for Discontinuous Coefficient Fields in Heterogeneous Porous Media}

\author[fsu]{Kapil Chawla}
\author[fsu]{Sanghyun Lee}
\author[ncsu]{Yeonjong Shin}

\affiliation[fsu]{organization={Department of Mathematics, Florida State University},
            addressline={1017 Academic Way}, 
            city={Tallahassee},
            postcode={32306}, 
            state={FL},
            country={USA}}

\affiliation[ncsu]{organization={Department of Mathematics, North Carolina State University},
             addressline={2311 Stinson Dr},
             city={Raleigh},
             postcode={27607},
             state={NC},
             country={USA}}

\begin{abstract}

Discontinuous coefficient fields arise in many computational physics problems and are often represented as cellwise constant data tied to a given spatial discretization. Such representations are inherently mesh-dependent, requiring interpolation or projection whenever they are transferred to a different discretization. 
In this work, we develop \emph{Parallel and Adaptive Mesh-Free Approximation (PAM)}, a mesh-independent framework that approximates discontinuous data by a continuous, closed-form function. The resulting approximation can be evaluated consistently across different geometries and numerical discretizations, while preserving sharp interface features.

The proposed PAM framework employs radial basis functions (RBFs) to construct continuous approximations of discontinuous data. To accurately capture discontinuities, we incorporate Shepard-normalization, which stabilizes the approximation near sharp interfaces. The coefficients of the RBF expansion are determined via sparse regression, enabling automatic selection of the most relevant basis functions and promoting robust representations. In addition, we develop a novel adaptive refinement approach which further enriches the approximation in regions of rapid spatial variation. We provide a theoretical analysis showing that the proposed normalized RBF framework achieves arbitrarily small $L^1$ error in approximating discontinuous step functions.

To enhance computational efficiency, the domain is partitioned into subdomains, and the reconstruction problem is solved independently on each subdomain in parallel.
Numerical experiments demonstrate the accuracy, adaptivity, scalability, and downstream impact of the proposed method on Darcy flow simulations, including tests on heterogeneous permeability fields, mesh-transfer settings, and the SPE10 benchmark.
\end{abstract}

%%Research highlights
%\begin{highlights}
%\item Research highlight 1
%\item Research highlight 2
%\end{highlights}

%% Keywords
\begin{keyword}
 Permeability reconstruction \sep Mesh-independent representation \sep Adaptive RBF approximation \sep Parallel computation \sep Regression 
\end{keyword}

\end{frontmatter}

%\linenumbers

\section{Introduction and Background}

Field-scale numerical simulation in heterogeneous porous media is central to many applications in environmental, civil, and hydraulic engineering, including groundwater management, contaminant remediation, oil and gas production, geothermal energy, and rare earth mining \cite{bear1972dynamics,dagan1989flow,durlofsky1991numerical}. A key challenge in these applications is the representation of highly heterogeneous subsurface material properties (e.g., permeability, porosity, hydraulic conductivity, etc.), which may vary by several orders of magnitude and contain sharp discontinuities \cite{hou1997multiscale,efendiev2009multiscale,lee2016pressure,choo2018enriched,lee2026}. In practice, permeability fields are often described as discontinuous, piecewise constant data tied to a given spatial discretization. When performing simulations on different meshes or discretizations, the
permeability field must be interpolated or projected each time to fit the new
mesh. Such situations arise, for example, in mesh refinement studies, adaptive
local refinement, multiscale simulations, or when transferring material data
between different numerical solvers; however, interpolation or projection may
smear sharp interfaces and introduce mesh-dependent artifacts for discontinuous
piecewise constant fields.
Thus, in this work, we propose a novel approach for approximating discontinuous
data by a closed-form continuous function using parallel and adaptive radial
basis functions. Once learned, this function can be utilized on any given
discretization.

Radial basis function (RBF) approximation provides a natural mesh-free framework for multivariate scattered-data approximation~\cite{powell1987rbf,buhmann2003rbf,wendland2005scattered}. Unlike tensor-product polynomial interpolants defined on structured grids, RBF approximants are constructed from radial kernels centered at scattered locations, offering geometric flexibility and the ability to handle irregularly distributed data. These properties make RBFs particularly attractive for representing fields independently of an underlying mesh. In addition, strictly positive definite kernels, such as the Gaussian, yield well-posed interpolation systems for distinct data sites~\cite{micchelli1986interpolation}, and RBF networks possess universal approximation capabilities on compact domains~\cite{park1991universal}.

However, classical global RBF interpolation constructs a single smooth surrogate over the entire domain, which can be restrictive for highly heterogeneous or discontinuous coefficient fields. Accurately representing sharp transitions within a globally smooth expansion may require a large number of RBF basis functions and can lead to localized oscillations near interfaces~\cite{fornberg2007runge}. Furthermore, global formulations result in dense linear systems whose size scales with the number of data points, limiting scalability for large problems~\cite{wendland2005scattered,majdisova2017biggeo}.

To address these challenges, we propose \emph{Parallel and Adaptive Mesh-Free Approximation} (PAM), a localized and sparse RBF framework for constructing a mesh-independent surrogate of discontinuous permeability fields. This method combines radial basis functions with Shepard normalization to yield stable, locally weighted approximations that remain robust in the presence of sharp transitions.

To improve accuracy, PAM incorporates an adaptive refinement strategy that targets regions of rapid variation in the permeability field. In particular, local approximation quality is assessed through error indicators, and the RBF representation is selectively enriched in regions where the error is large. This enrichment is achieved by increasing the number of RBF centers and adjusting the shape parameters of the basis functions, allowing the method to resolve sharp interfaces while avoiding unnecessary basis enrichment in smooth regions. To improve efficiency, the computational domain is partitioned into nonoverlapping subregions, and independent sparse RBF regressions are performed within each subregion. The resulting local approximations are then combined into a continuous coefficient field that can be evaluated on arbitrary finite element or finite-volume discretizations, while the subdomain structure enables efficient parallel computation.

We also provide theoretical analysis showing that the proposed normalized RBF framework achieves arbitrarily small $L^1$ error in approximating discontinuous step functions. The performance of PAM is then assessed through a sequence of numerical experiments designed to isolate and demonstrate its key components. We first construct a baseline RBF approximation of the permeability field using a uniform distribution of centers across the domain. In this setting, one RBF center is assigned per computational element (e.g., at cell centers), and fixed shape parameters are used. Next, we apply the adaptive refinement strategy to the baseline approximation and quantify its effect on resolving sharp interfaces. We then apply the parallel domain decomposition scheme that solves independent local RBF problems across subregions to highlight its computational speedup and demonstrate its scalability for large-scale problems. To assess the impact of the reconstructed permeability, we solve the Darcy equation using the learned coefficient field and compare the resulting pressure solutions against reference solutions. Finally, we apply PAM to the SPE10 benchmark dataset \cite{christie2001tenth} to demonstrate its performance on realistic, high-contrast heterogeneous permeability fields.

The remainder of this paper is organized as follows. Section~2 introduces the PAM framework, including the problem formulation and main algorithmic components. Section~3 presents the adaptive RBF construction and the associated sparse regression strategy based on the Elastic Net. Section~4 provides the theoretical analysis, including approximation of step functions using Shepard-normalized RBFs and the associated error analysis. Section~5 contains numerical experiments that validate the accuracy, adaptivity, and scalability of the proposed method, including applications to high-contrast heterogeneous permeability fields such as the SPE10 benchmark.

\section{Overview of the proposed PAM algorithm}
\label{sec:2}

The main objective is to construct a continuous approximation of a permeability field from discontinuous piecewise constant data. 
As a motivating example, and for simplicity of presentation, we consider
single-phase flow in a heterogeneous porous medium governed by the Darcy equation
\begin{equation}
-\nabla \cdot \bigl(K(\bx)\nabla p(\bx)\bigr) = f(\bx),
\qquad \bx \in \Omega \subset \mathbb{R}^d,
\end{equation}
where $p:\Omega\to\mathbb{R}$ denotes the pressure field, $f$ is a source term,
and $K$ is a scalar-valued permeability field. In this work, we consider
$d=1,2$.

Let $\mathcal{T}_h=\{T\}$ be a shape-regular partition of $\Omega$. For
$d=1$, the elements are intervals, whereas for $d=2$, the elements are triangles
or quadrilaterals. We denote by
\[
h := \max_{T\in\mathcal{T}_h} h_T,
\]
where $h_T$ is the diameter of the element $T$.

Throughout this paper, the permeability field $K(\bx)$ is assumed to be given
as discontinuous piecewise constant data on the underlying mesh $\mathcal{T}_h$,
allowing jumps across element interfaces. More precisely, we define
\[
K_T := K|_T, \qquad T\in\mathcal{T}_h,
\]
and assume that
\[
0<K_{\min}\le K_T\le K_{\max}<\infty
\qquad \text{for all } T\in\mathcal{T}_h .
\]
Examples of such permeability fields are shown in Figure~\ref{fig:example_of_K}.

\begin{figure}[!h]
    \centering
    \includegraphics[width=0.34\linewidth]{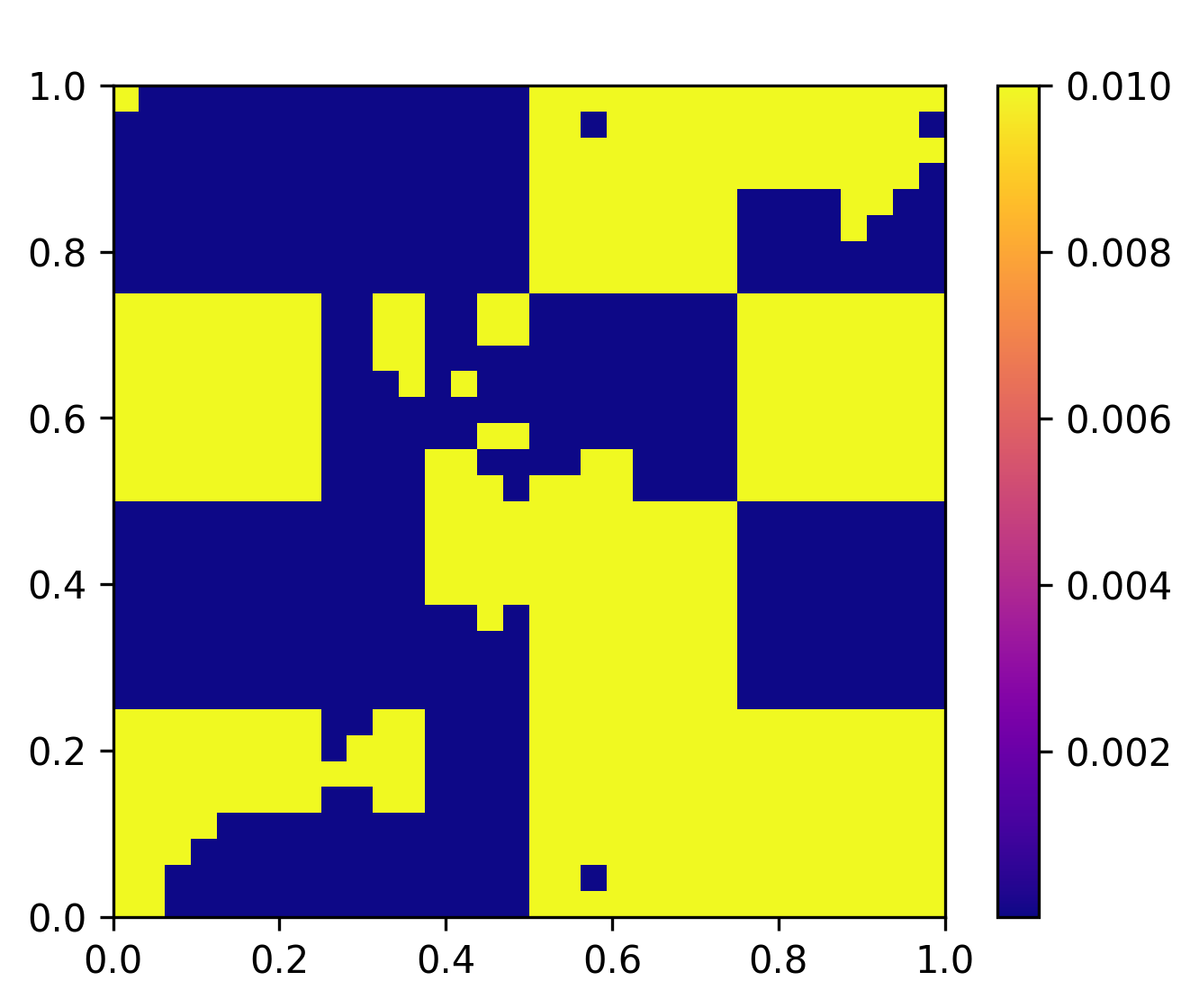}
    \includegraphics[width=0.35\linewidth]{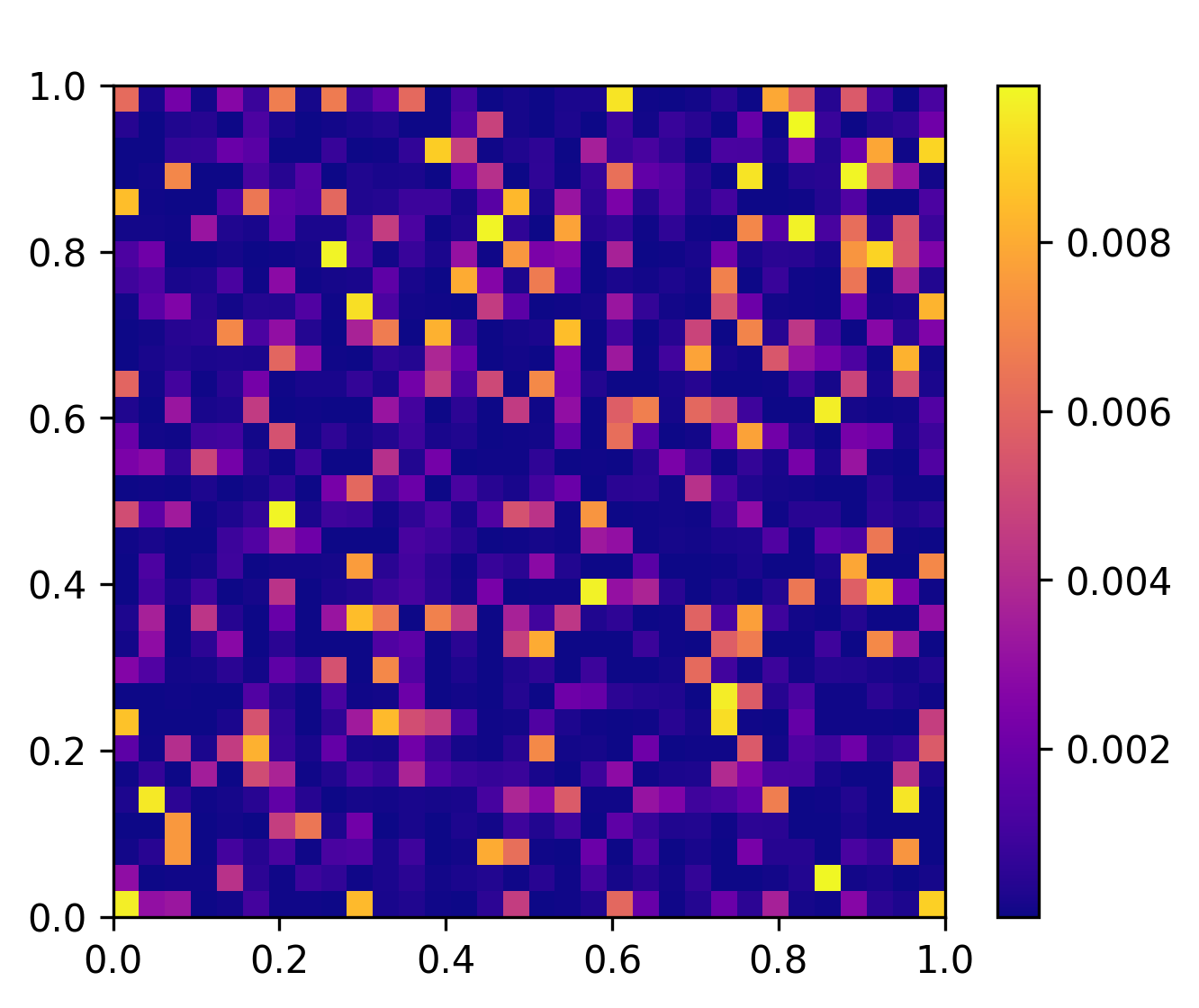}
    \caption{Examples of heterogeneous and discontinuous permeability values ($K(\bx)$) on a $32\times 32$ mesh in a two-dimensional domain.}
    \label{fig:example_of_K}
\end{figure}

The PAM framework consists of four conceptual stages: domain decomposition, local mesh-free regression, adaptive refinement, and global reconstruction.

\begin{computesteps}
\Step{1}{\textit{Subdivide the domain $\Omega$.} The computational domain $\Omega$ is partitioned into $N$ nonoverlapping Lipschitz subdomains $\{\omega_i\}_{i=1}^N$ such that
$$
\overline{\Omega} = \bigcup_{i=1}^N \overline{\omega_i},
\qquad
\omega_i \cap \omega_j = \emptyset \quad (i\neq j).
$$
This decomposition enables independent and parallel computation on each subdomain.}
    
\Step{2}{\textit{Local mesh-free permeability approximation.}
For each subdomain $\omega_i$, we construct a local approximation of the permeability by solving a sparse regression problem based on the underlying piecewise constant data. We assume that each subdomain $\omega_i$ is a union of elements of $\mathcal{T}_h$. Let
$\mathcal{T}_h(\omega_i) := \{ T \in \mathcal{T}_h \;:\; T \subset \overline{\omega_i} \}$
denote the set of mesh elements contained in $\omega_i$, and let $N_i := |\mathcal{T}_h(\omega_i)|$ be the number of such elements.

For each element $T \in \mathcal{T}_h(\omega_i)$, we associate a sampling point $\bx_T \in T$ (e.g., the centroid of $T$). Since the permeability field is piecewise constant, its value on $T$ is denoted by $K_T$.

We define the local data vector
$$
\bK_{\omega_i}
=
\begin{pmatrix}
K_{T_1} \\
\vdots \\
K_{T_{N_i}}
\end{pmatrix}
\in \mathbb{R}^{N_i},
$$
where $\{T_j\}_{j=1}^{N_i}$ are the elements in $\mathcal{T}_h(\omega_i)$. Let $\{\varphi_m^{(i)}\}_{m=1}^{M_i}$ be a set of local basis functions defined on $\omega_i$. The corresponding feature matrix
$
\Phi_{\omega_i} \in \mathbb{R}^{N_i \times M_i}
$
is defined by
$$
(\Phi_{\omega_i})_{j,m} = \varphi_m^{(i)}(\bx_{T_j}),
\qquad
1 \le j \le N_i,\;\; 1 \le m \le M_i.
$$
For simplicity, we choose one RBF basis per element in $\omega_i$, i.e., $M_i = N_i$. We compute the local coefficient vector $\boldsymbol{\beta}^{(i)} \in \mathbb{R}^{M_i}$ by solving the Elastic Net problem~\cite{zou2005elastic}
\begin{equation}\label{elastic}
    \min_{\boldsymbol{\beta}^{(i)}\in\mathbb{R}^{M_i}}
    \frac12\|\bK_{\omega_i}-\Phi_{\omega_i}\boldsymbol{\beta}^{(i)}\|_2^{2}
    +\lambda_1\|\boldsymbol{\beta}^{(i)}\|_1
    +\frac{\lambda_2}{2}\|\boldsymbol{\beta}^{(i)}\|_2^{2},
\end{equation}
where $\lambda_1,\lambda_2>0$ are regularization parameters controlling sparsity and stability. Here, $\lambda_1$ and $\lambda_2$ denote
the parameters used in the local Elastic Net problem on a given subdomain
$\omega_i$. In the parallel implementation, these parameters may be selected
independently for different subdomains, although we keep the notation
$\lambda_1$ and $\lambda_2$ for simplicity. Details of the basis construction and regression procedure are given in Section~\ref{sec:adaptive_rbf}. The resulting coefficients define a continuous local surrogate
\begin{equation}
K^*_{\omega_i}(\bx)
=
\sum_{m=1}^{M_i} \beta_m^{(i)} \,\varphi_m^{(i)}(\bx),
\qquad \bx\in\omega_i,
\label{eq:local_Kstar}
\end{equation}
whose evaluation at the sampling points yields the reconstructed vector
$
\bK^*_{\omega_i} = \Phi_{\omega_i}\boldsymbol{\beta}^{(i)}.
$
}

\Step{3}{\textit{Residual-driven adaptive refinement.}
We assess the local approximation quality on each subdomain $\omega_i$ using an elementwise indicator. For each element $T \in \mathcal{T}_h(\omega_i)$, we define
\begin{equation}
R_T
=
\sum_{\ell=1}^{n_q}
w_T^\ell
\bigl(K^*_{\omega_i}(\bx_T^\ell) - K_T\bigr)^2,
\end{equation}
where $\{\bx_T^\ell, w_T^\ell\}_{\ell=1}^{n_q}$ denote the quadrature points and weights on the element $T$, and $n_q$ is the number of quadrature points associated with the chosen quadrature rule. This quantity provides an approximation of the elementwise $L^2$ error between the reconstructed permeability and the piecewise constant data.

Elements with the largest indicators are marked for refinement, and the local RBF basis is enriched in the corresponding regions. Steps~2 and~3 are repeated until a prescribed stopping criterion is satisfied.
}

\Step{4}{\textit{Global reconstruction.}
Using the learned coefficients from the local regressions, we construct on each subdomain $\omega_i$ a continuous local surrogate $K^*_{\omega_i}(\bx)$ in closed form. The local surrogates $\{K^*_{\omega_i}\}_{i=1}^N$ are then assembled into a global permeability field $K^*(\bx)$ on $\Omega$ by
$$
K^*(\bx) = K^*_{\omega_i}(\bx), \qquad \bx \in \omega_i.
$$
Thus, $K^*$ is a mesh-independent, piecewise continuous, closed-form representation of the permeability field with respect to the subdomain partition. It can therefore be evaluated at arbitrary spatial locations and directly used in the numerical solution of PDEs on arbitrary discretizations and spatial resolutions. Figure~\ref{fig:overall} illustrates the overview of the proposed algorithm.
}

\end{computesteps}

\begin{remark}
When multiple subdomains are used, the piecewise definition above does not in general guarantee continuity of $K^*$ across subdomain interfaces, since each local surrogate $K^*_{\omega_i}$ is constructed independently. If a globally continuous reconstruction is desired, one may introduce an additional interface treatment, such as a gluing or averaging procedure across neighboring subdomains.
\end{remark}

\begin{figure}[!h]
    \centering
    \includegraphics[width=\linewidth]{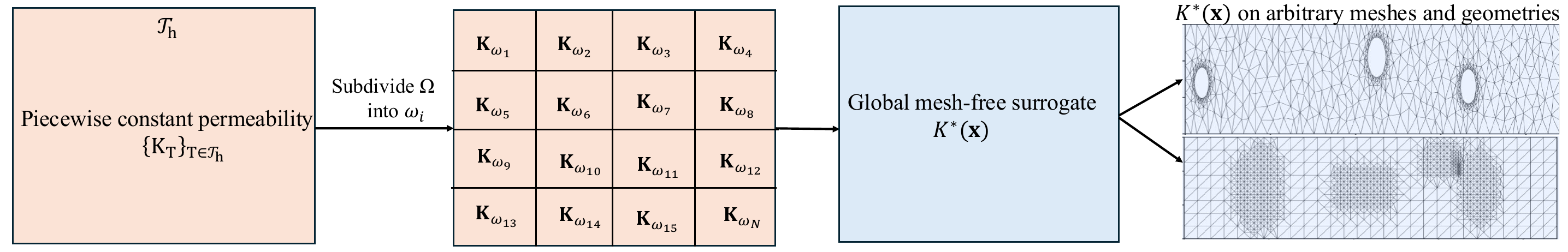}
    \caption{Overview of the PAM framework. Piecewise-constant permeability data on $\mathcal T_h$ are partitioned into subdomains $\{\omega_i\}_{i=1}^N$, where local adaptive RBF approximations are constructed and assembled into a global mesh-free representation $K^*(\mathbf{x})$. The surrogate enables evaluation on arbitrary meshes and geometries.}
    \label{fig:overall}
\end{figure}

%\clearpage
%\newpage

\section{Adaptive Selection of Radial Basis Functions and Elastic Net Algorithm}\label{sec:adaptive_rbf}

This section provides the algorithmic realization of
Step 2 (local mesh-free regression) and Step 3
(residual-driven adaptive refinement) of the PAM framework introduced in Section~\ref{sec:2}.
We specify the construction of the local basis functions, the Elastic Net regression problem \cite{zou2005elastic}, and the adaptive enrichment strategy used to approximate the permeability data on each subdomain.

\subsection{Radial Basis Functions}

On each subdomain $\omega_i$, let $\phi:[0,\infty)\to\mathbb{R}$ denote a radial kernel. A radial basis function (RBF) centered at $\bc\in\mathbb{R}^d$ is defined by
\begin{equation}
\varphi(\bx;\bc) := \phi(\|\bx-\bc\|_2),
\end{equation}
where $\bx\in\mathbb{R}^d$ and $\|\cdot\|_2$ denotes the Euclidean norm. In this work, we employ Gaussian radial kernels, which yield smooth and spatially localized basis functions well suited for approximating heterogeneous permeability fields~\cite{buhmann2003rbf}.

Specifically, the $m$-th local basis function on $\omega_i$ is given by
\begin{equation}
\varphi_m^{(i)}(\bx)
=
\exp\!\left(-\frac{\|\bx-\bc_m^{(i)}\|_2^2}{2(\sigma_m^{(i)})^2}\right),
\label{eq:gaussian_rbf}
\end{equation}
where  {$\bc_m^{(i)}\in\omega_i$} denotes the center and $\sigma_m^{(i)}>0$ is the width parameter controlling spatial localization. The collection $\{\bc_m^{(i)},\sigma_m^{(i)}\}_{m=1}^{M_i}$ defines the local RBF dictionary on $\omega_i$.

The local permeability surrogate $K^*_{\omega_i}(\bx)$, defined in Step~2 of Section~\ref{sec:2}, is given by
\begin{equation}
K^*_{\omega_i}(\bx)
=
\sum_{m=1}^{M_i} \beta_m^{(i)} \,\varphi_m^{(i)}(\bx),
\qquad \bx\in\omega_i,
\label{eq:local_rbf_surrogate}
\end{equation}
where the coefficient vector $\boldsymbol{\beta}^{(i)}\in\mathbb{R}^{M_i}$ is computed by solving the elastic-net problem~\eqref{elastic}.

Evaluating~\eqref{eq:local_rbf_surrogate} at the sampling points $\{\bx_{T_j}\}_{j=1}^{N_i}\subset\omega_i$ yields the discrete relation
\begin{equation}
\bK^*_{\omega_i}
=
\Phi_{\omega_i}\boldsymbol{\beta}^{(i)},
\end{equation}
where $\Phi_{\omega_i}\in\mathbb{R}^{N_i\times M_i}$ is the feature matrix with entries
$$
(\Phi_{\omega_i})_{j,m}=\varphi_m^{(i)}(\bx_{T_j}).
$$

\subsection{Shepard Normalization and Partition of Unity RBFs}
\label{subsec:shepard}

Although standard RBFs provide powerful approximation properties, their raw evaluations can produce ill-conditioned feature matrices, especially when basis functions vary in amplitude.
To mitigate these issues, we employ a Shepard-normalized RBF construction, which forms a smooth partition of unity and is defined as

\begin{definition}[Shepard Normalization~\cite{Shepard1968, MelenkBabuska1996, BabuskaMelenk1997}]
On each subdomain $\omega_i$, given nonnegative local RBFs $\{\varphi_m^{(i)}(\bx)\}_{m=1}^{M_i}$ centered at $\{\bc_m^{(i)}\}_{m=1}^{M_i}$, the normalized weights are defined pointwise as
\begin{equation}
    w_m^{(i)}(\bx)
    \;=\;
    \frac{\varphi_m^{(i)}(\bx)}{\sum_{k=1}^{M_i}\varphi_k^{(i)}(\bx)},
    \qquad m = 1, \dots, M_i.
\end{equation}
Whenever $\sum_{k=1}^{M_i} \varphi_k^{(i)}(\bx) > 0$, the weights satisfy
$$
\sum_{m=1}^{M_i} w_m^{(i)}(\bx) = 1,
$$
and therefore form a partition of unity.
\end{definition}

In practice, Shepard normalization is applied to the sampled evaluations of the local basis functions. Given the feature matrix $\Phi_{\omega_i}\in\mathbb{R}^{N_i\times M_i}$ %with entries
%$$
%(\Phi_{\omega_i})_{j,m} = \varphi_m^{(i)}(\bx_{T_j}),
%$$
evaluated at representative points $\{\bx_{T_j}\}_{j=1}^{N_i}\subset\omega_i$, the normalized matrix $W_{\omega_i}$ is obtained by row-wise scaling,
\begin{equation}\label{eqn:shepard_rbf}
    (W_{\omega_i})_{j,m}
    =
    \frac{{(\Phi_{\omega_i})_{j,m}}}{\sum_{k=1}^{M_i}{(\Phi_{\omega_i})_{j,k}}},
    \qquad j=1,\dots,N_i.
\end{equation}
Each row of $W_{\omega_i}$ sums to unity, which mitigates scaling disparities among features and can improve the numerical conditioning of the Elastic Net regression.

\begin{figure}[!h]
  \centering

  \begin{subfigure}[!h]{0.32\linewidth}
    \centering
    \includegraphics[width=\linewidth]{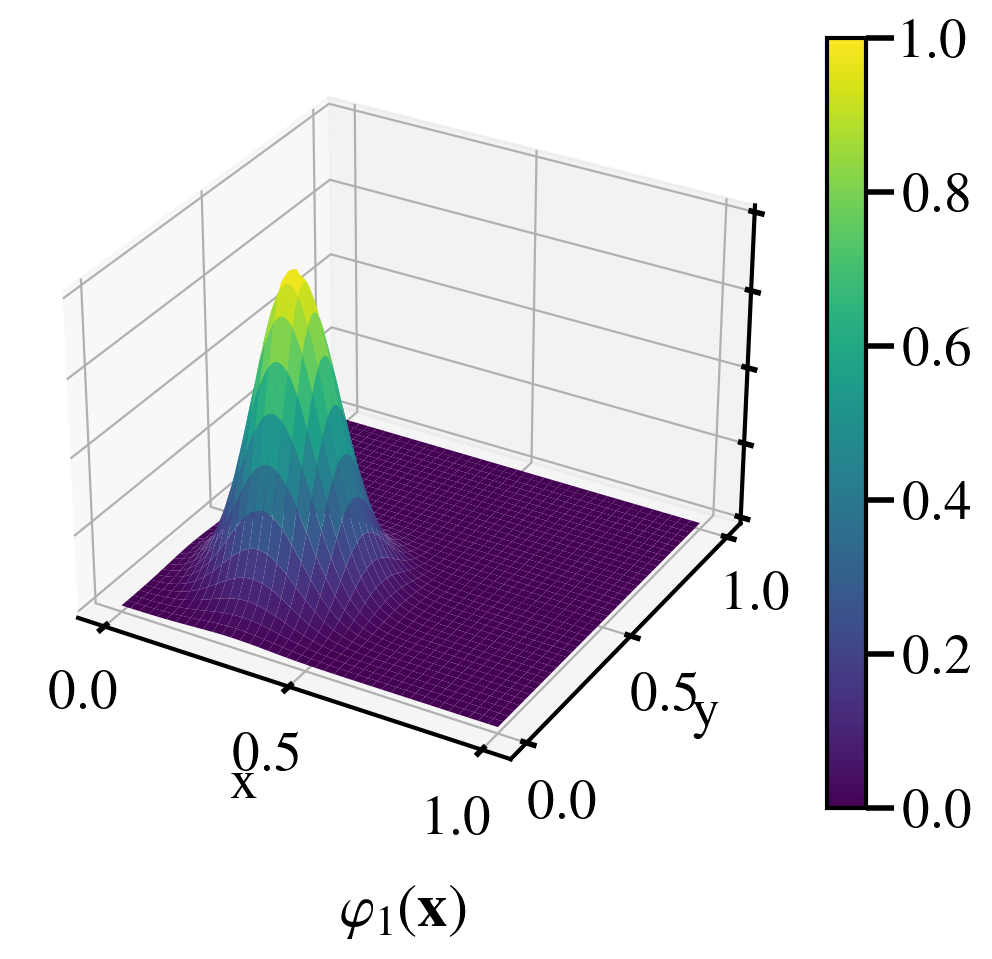}
    \caption{}
  \end{subfigure}
  \hfill
  \begin{subfigure}[!h]{0.32\linewidth}
    \centering
    \includegraphics[width=\linewidth]{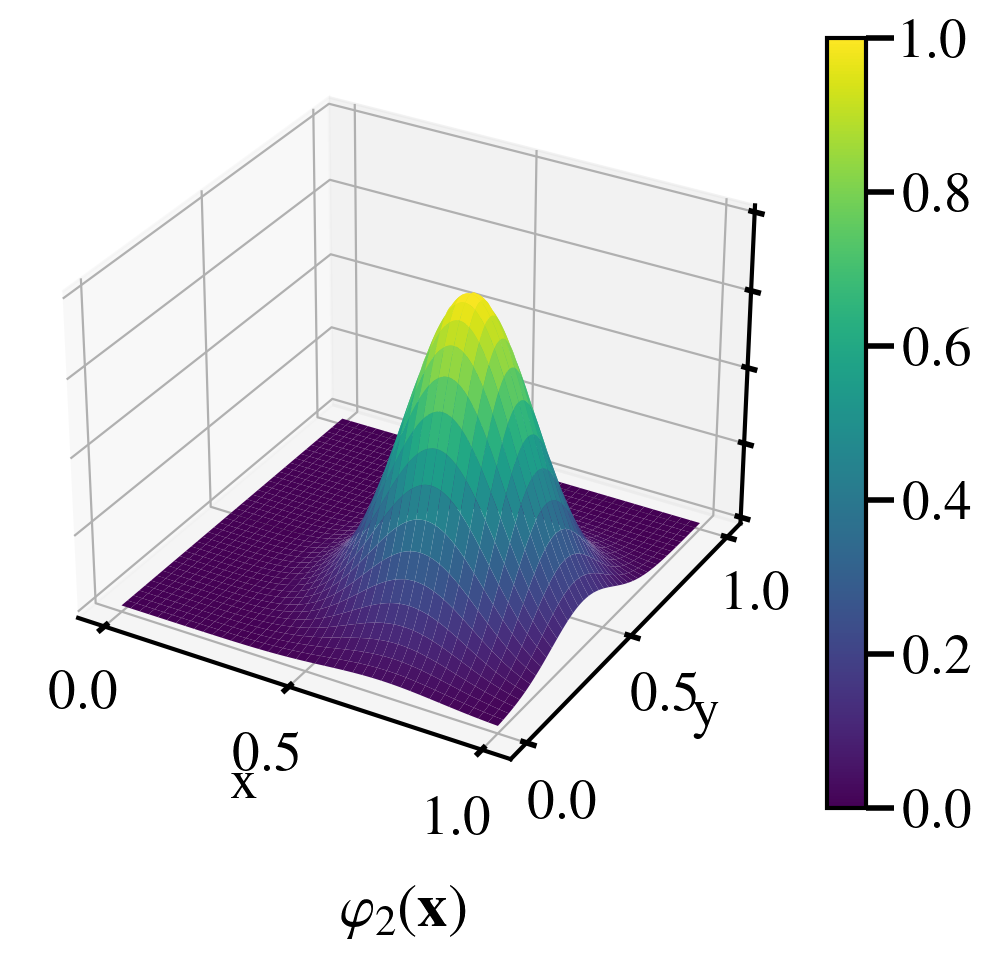}
    \caption{}
  \end{subfigure}
  \hfill
  \begin{subfigure}[!h]{0.32\linewidth}
    \centering
    \includegraphics[width=\linewidth]{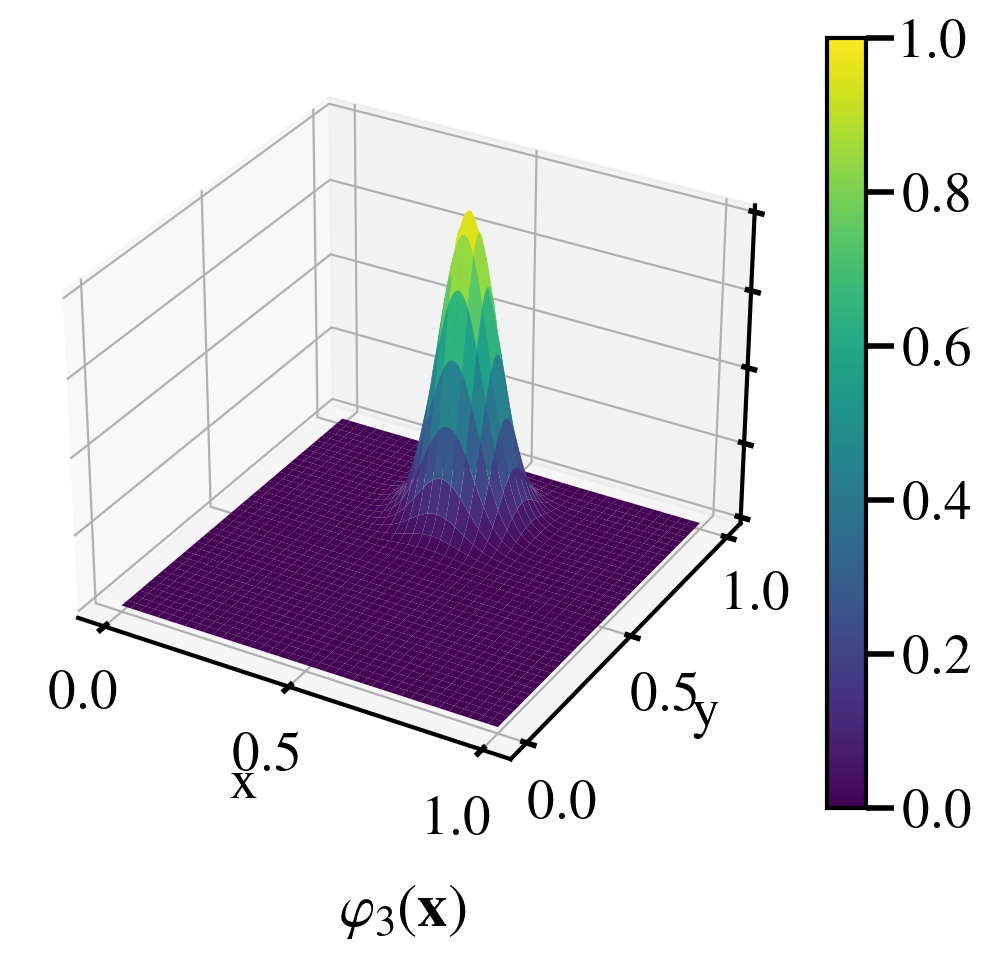}
    \caption{}
  \end{subfigure}

  \caption{Individual Gaussian RBFs: (a) $\varphi_1(\bx)$ centered at $\bc_1$, (b) $\varphi_2(\bx)$ centered at $\bc_2$, and (c) $\varphi_3(\bx)$ centered at $\bc_3$.}
  \label{fig:rbf_example_1}
\end{figure}

To illustrate the effect of Shepard normalization, we consider a simple example on the domain $\Omega = [0,1]^2$ involving three Gaussian RBFs $\varphi_1(\bx)$, $\varphi_2(\bx)$, and $\varphi_3(\bx)$ with centers $\bc_1=(0.3,\,0.3)$, $\bc_2=(0.7,\,0.4)$, and $\bc_3=(0.5,\,0.75)$, and widths $\sigma_1=0.10$, $\sigma_2=0.15$, and $\sigma_3=0.07$, as shown in Figure~\ref{fig:rbf_example_1}. The unnormalized superposition $\sum_{m=1}^3 \beta_{m} \varphi_{m}(\bx)$,
shown in Figure~\ref{fig:rbf_pu_blend}(a-b), is smooth but unbalanced, with regions of overlap exhibiting amplified magnitudes. However, after applying Shepard normalization, the resulting weights
$$
w_{m}(\bx)=\frac{\varphi_{m}(\bx)}{\sum_{k=1}^3 \varphi_{k}(\bx)},
$$
(Figure~\ref{fig:rbf_pu_example}) form a proper partition of unity, and the field
$K^*(\bx)=\sum_{m=1}^3 \beta_{m} w_{m}(\bx)$
(Figure~\ref{fig:rbf_pu_blend}(c)) becomes balanced and bounded between $\min_{m} \beta_{m}$ and $\max_{m} \beta_{m}$.  
\begin{figure}[!h]
  \centering

  \begin{subfigure}[!h]{0.32\linewidth}
    \centering
    \includegraphics[width=\linewidth]{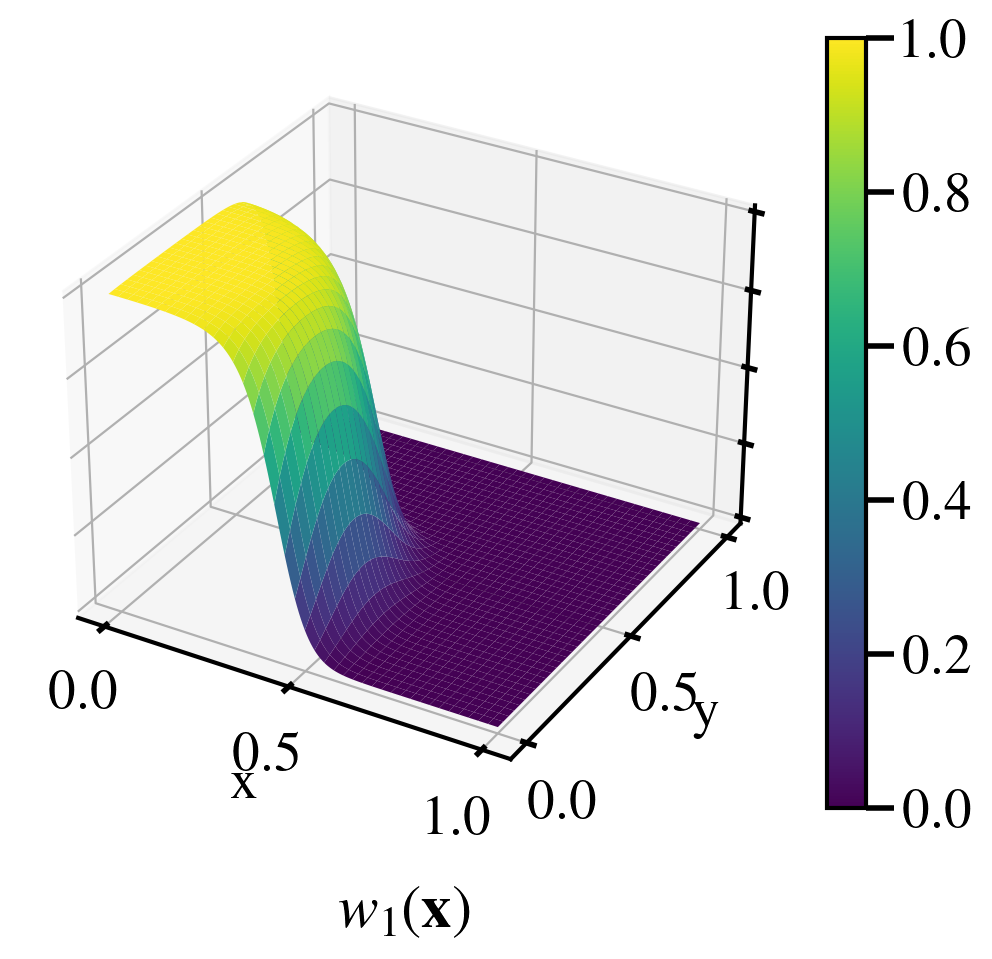}
    \caption{}
  \end{subfigure}
  \hfill
  \begin{subfigure}[!h]{0.32\linewidth}
    \centering
    \includegraphics[width=\linewidth]{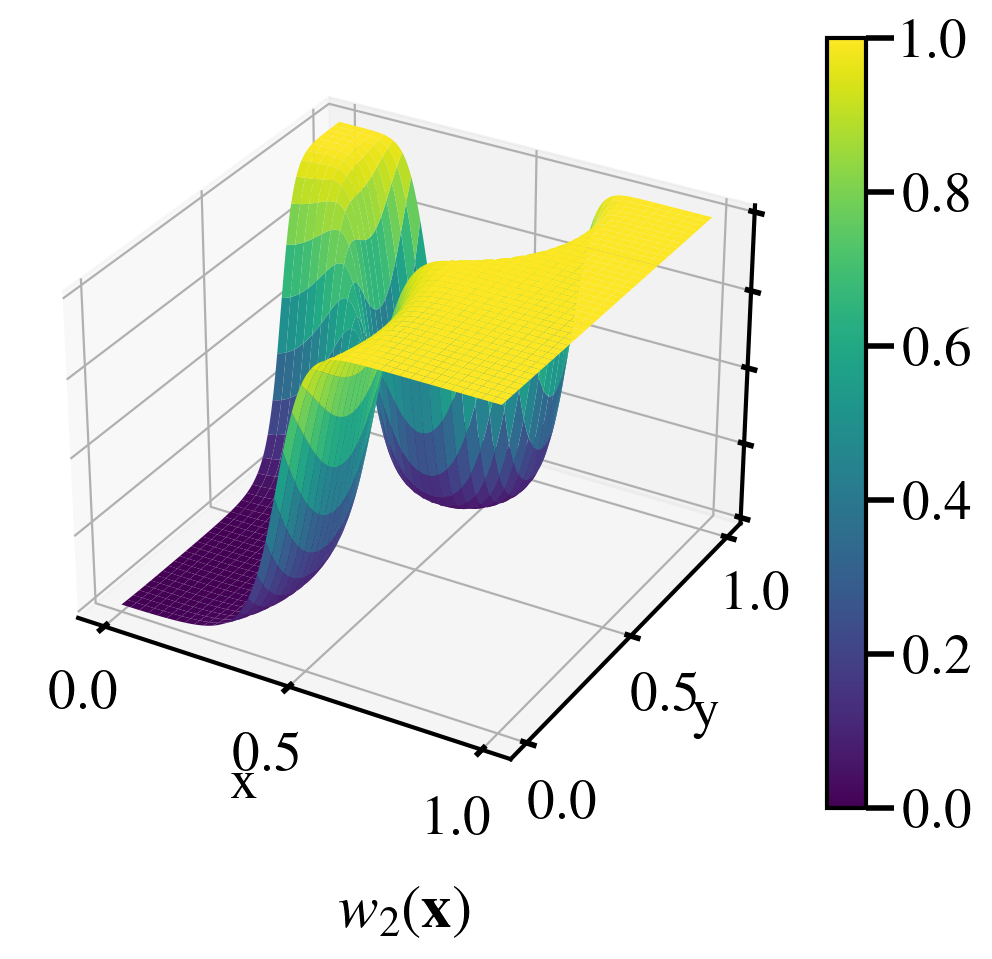}
    \caption{}
  \end{subfigure}
  \hfill
  \begin{subfigure}[!h]{0.32\linewidth}
    \centering
    \includegraphics[width=\linewidth]{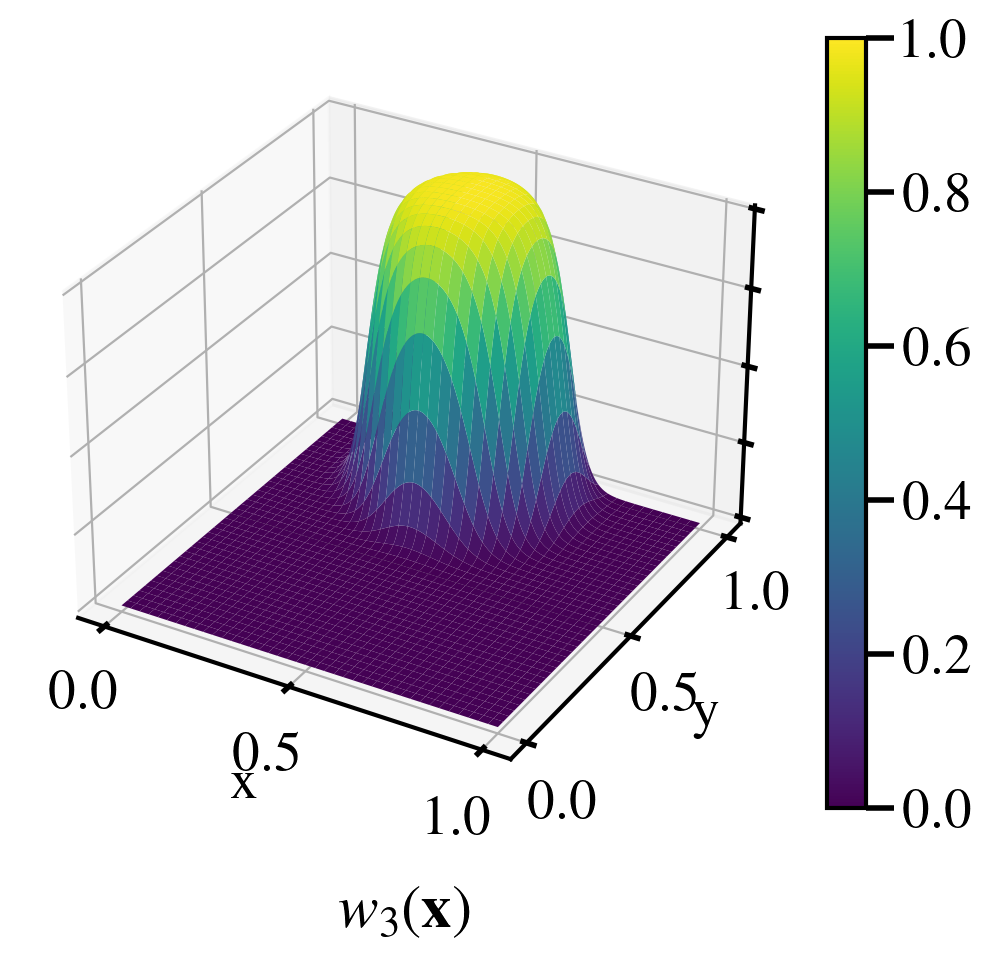}
    \caption{}
  \end{subfigure}

  \caption{Shepard-normalized weights:
(a) $w_1(\bx)$, (b) $w_2(\bx)$, and (c) $w_3(\bx)$,
each satisfying $0 \le w_m(\bx) \le 1$ and $\sum_{m=1}^3 w_m(\bx)=1$.}
  \label{fig:rbf_pu_example}
\end{figure}

\begin{figure}[!h]
  \centering
  \includegraphics[width=\linewidth]{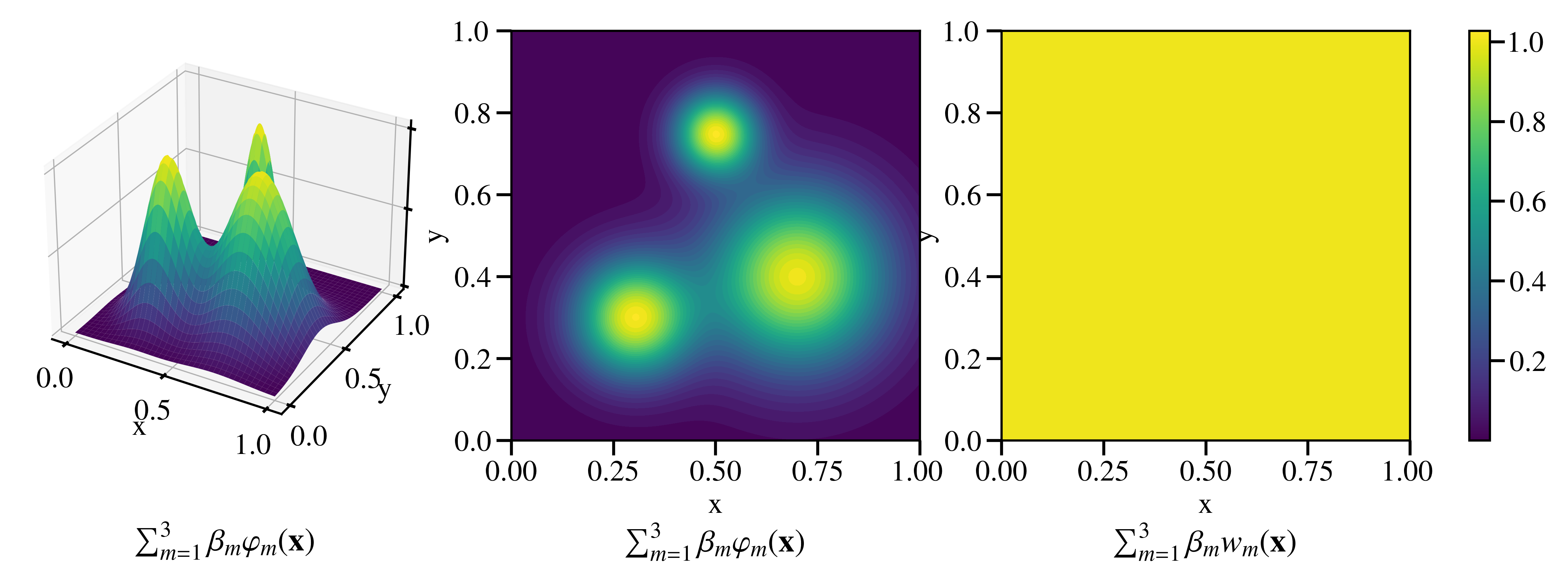}
  \caption{Comparison of RBF reconstructions:
(a)-(b) unnormalized superpositions $\sum_{m=1}^3 \beta_m \varphi_m(\bx)$ and
(c) Shepard-normalized reconstruction $\sum_{m=1}^3 \beta_m w_m(\bx)$. The normalization removes amplitude amplification in overlap regions and yields a smooth, well-balanced surface.}
  \label{fig:rbf_pu_blend}
\end{figure}

\subsection{Elastic Net Regression}
\label{sec:elastic_net}

On each subdomain $\omega_i$, the coefficient vector $\boldsymbol{\beta}^{(i)} \in \mathbb{R}^{M_i}$ determines the weights of the local RBF basis functions in the permeability surrogate. We employ Elastic Net regularization~\cite{zou2005elastic} to compute these coefficients, combining the sparsity-promoting $\ell_1$ penalty of the Lasso~\cite{tibshirani1996lasso} with the stability of ridge regression~\cite{hoerl1970ridge}.

The resulting optimization problem~\eqref{elastic} balances three components: a least-squares data fidelity term enforcing agreement with the sampled permeability values, an $\ell_1$ penalty promoting sparse selection of relevant basis functions, and an $\ell_2$ penalty improving numerical stability. The regularization parameters $\lambda_1,\lambda_2>0$ control the trade-off between sparsity and stability.

Since the objective function is convex, the problem can be solved efficiently using coordinate descent~\cite{friedman2010regularization}, which updates one coefficient at a time while keeping the others fixed. This approach is well suited for moderately large local dictionaries and guarantees convergence to a global minimizer~\cite{friedman2010regularization, wright2015coordinate}.

\begin{remark}
\label{rem:logK}
To enforce positivity of the permeability, all regressions are performed on the logarithm of the data. Since the permeability is given as elementwise constant values $K_T>0$, we define the transformed data $\widetilde{K}_T := \log(K_T)$,
and construct an approximation $\widetilde{K}^*(\bx)$ using the proposed RBF framework. The reconstructed permeability is then recovered as
$K^*(\bx) = \exp\!\big(\widetilde{K}^*(\bx)\big)$.
This transformation guarantees that $K^*(\bx)>0$ and prevents nonphysical negative values that may arise from the regression procedure.
\end{remark}
 
\begin{comment}
\begin{remark}
\label{rem:logK}
To enforce positivity of the permeability, all regressions are performed on the logarithm of the data. Since the permeability is given as element wise constant values $K_T>0$, we define the transformed data $\widetilde{K}_T := \log(K_T)$,
and construct an approximation $\widetilde{K}^*(\bx)$ using the proposed RBF framework. The reconstructed permeability is then recovered as
$K^*(\bx) = \exp\!\big(\widetilde{K}^*(\bx)\big)$.
This transformation guarantees that $K^*(\bx)>0$ and prevents nonphysical negative values that may arise from the regression procedure.
\end{remark}
\end{comment}

\subsection{Adaptive RBF Enrichment with Elastic Net}

We propose an adaptive radial basis function (RBF) framework, coupled with Elastic Net regression, to efficiently capture sharp transitions in heterogeneous permeability fields. On each subdomain $\omega_i$, the method starts from an initial set of RBF basis functions and iteratively enriches the representation in regions where the approximation error is large.

Let $\{\varphi_m^{(i)}\}_{m=1}^{M_i}$ denote the current set of local RBF basis functions on $\omega_i$, with centers $\{\bc_m^{(i)}\}_{m=1}^{M_i}$ and widths $\{\sigma_m^{(i)}\}_{m=1}^{M_i}$. At each adaptive iteration, the coefficients $\boldsymbol{\beta}^{(i)}$ are computed by solving the Elastic Net regression problem described in Section~\ref{sec:elastic_net}. Using the resulting approximation, we evaluate elementwise residual indicators $R_T$ (cf. Step~3) and identify a subset of elements $\mathcal{T}_i^{\mathrm{ref}} {\subset \mathcal{T}_h(\omega_i)}$ corresponding to the largest errors.

For each selected element $T \in \mathcal{T}_i^{\mathrm{ref}}$, the approximation space is locally enriched by introducing additional RBFs in a neighborhood of the associated center locations. Specifically, for each basis function centered near $T$, we define a local neighborhood $\mathcal{N}(\bc_m^{(i)})$ and introduce a finite set of new centers $\{\bc_m^{(i,q)}\}_{q=1}^{M_q} \subset \mathcal{N}(\bc_m^{(i)})$. The widths of the new bases are chosen as
$$
\sigma_m^{(i,q)} = \eta_m^{(q)} \, \sigma_m^{(i)}, \qquad \eta_m^{(q)} \in (0,1),
$$
yielding additional basis functions $\{\varphi_m^{(i,q)}(\bx)\}_{q=1}^{M_q}$ that provide increased resolution near $\bc_m^{(i)}$.

This enrichment increases the local basis density in regions of large approximation error while maintaining a compact representation elsewhere. After enrichment, the Elastic Net regression is solved again on the updated dictionary, resulting in an increased number of basis functions $M_i$. The adaptive process is repeated until a prescribed stopping criterion is satisfied, such as a residual tolerance or a maximum number of basis functions. The complete procedure is summarized in Algorithm~\ref{algo:adaptive}.

To illustrate the effect of adaptive enrichment, we consider a two-dimensional domain $\Omega = [0,1]^2$ with an initially uniform set of Gaussian RBFs. Selected basis functions are refined according to the above strategy, and the resulting
configurations are shown in Figure~\ref{fig:adaptivity_cases}.

\begin{figure}[!h]
    \centering
    \includegraphics[width=\linewidth]{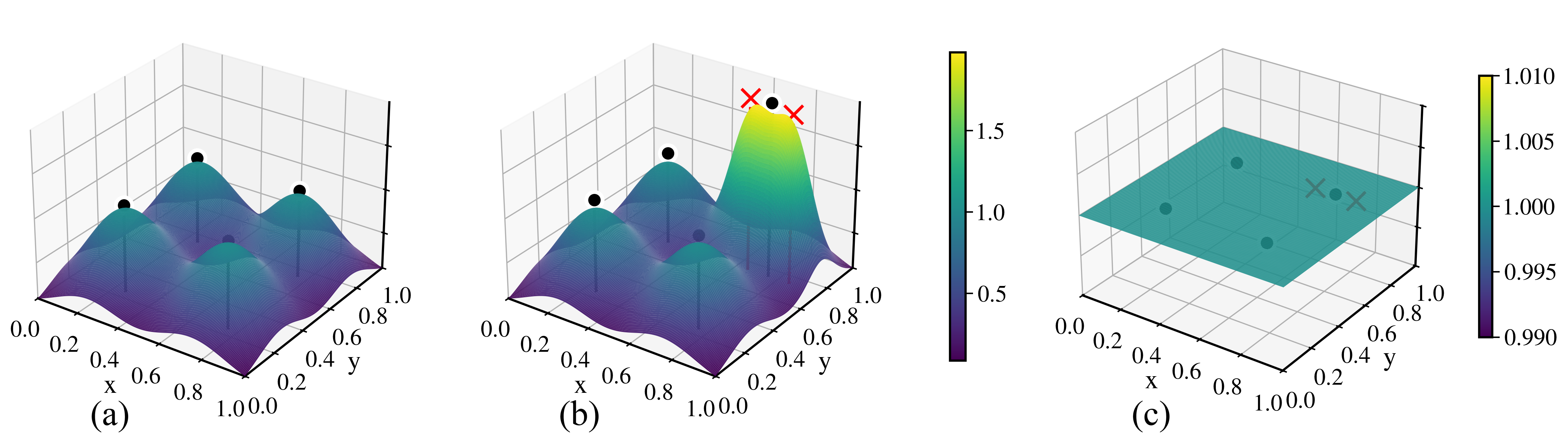}
    \caption{Adaptive refinement and Shepard normalization.Black filled circles denote existing RBF centers, while red crosses indicate newly added neighboring centers.
    (a) Unnormalized RBF superposition before refinement;
    (b) Unnormalized RBF superposition after adaptive refinement;
    (c) verification of the partition-of-unity property under Shepard normalization, showing $\sum_m w_m \approx 1$.
}

    \label{fig:adaptivity_cases}
\end{figure}

\begin{algorithm}[!h]
\small
\caption{Adaptive RBF Enrichment with Elastic Net Regression (local solve on each subdomain)}
\label{algo:adaptive}

\For{each subdomain $\omega_i \subset \Omega$, $i=1,\dots,N$}
{
\KwIn{
mesh partition $\mathcal{T}_h(\omega_i)$ with elementwise permeability values $\{K_{T_j}\}_{j=1}^{N_i}$, and corresponding data vector $\widetilde{\bK}_{\omega_i} = (\log K_{T_j})_{j=1}^{N_i}$\;
initial RBF dictionary $\mathcal{D}_{\omega_i}=\{(\bc_m^{(i)},\sigma_m^{(i)})\}_{m=1}^{M_i}$ with feature matrix $\Phi_{\omega_i}$\;
Elastic Net parameters $\lambda_1,\lambda_2$\;
number of marked cells per adaptive round $K_{\mathrm{top}}$\;
maximum number of added RBFs $M_{\max}$\;
stopping tolerance $\varepsilon_{\mathrm{tol}}$
}

\BlankLine
Initialize $M_i' \gets M_i$ and $\boldsymbol{\beta}^{(i)}\gets\boldsymbol{0}\in\mathbb{R}^{M_i'}$\;

\BlankLine
\Repeat{$\max_{T\in\mathcal{T}_h(\omega_i)} R_T < \varepsilon_{\mathrm{tol}}$ \textbf{or} $M_i' \ge M_i + M_{\max}$}
{
Initialize $\mathcal{B}_{\mathrm{new}}\gets\emptyset$\;

\BlankLine
\textbf{(1) Elastic Net regression.}\;
Solve
$$
\boldsymbol{\beta}^{(i)}
=
\arg\min_{\boldsymbol{\beta}\in\mathbb{R}^{M_i'}}
\frac12\|{\widetilde{\bK}_{\omega_i}}-\Phi_{\omega_i}\boldsymbol{\beta}\|_2^2
+\lambda_1\|\boldsymbol{\beta}\|_1
+\frac{\lambda_2}{2}\|\boldsymbol{\beta}\|_2^2.
$$
Define
$$
{\widetilde{K}^*_{\omega_i}}(\bx)=\sum_{m=1}^{M_i'} \beta_m^{(i)}\,\varphi_m^{(i)}(\bx),
\qquad \bx\in\omega_i,
$$
and recover
$$
{K^*_{\omega_i}(\bx)=\exp\!\bigl(\widetilde{K}^*_{\omega_i}(\bx)\bigr).}
$$

\BlankLine
\textbf{(2) Compute elementwise residual indicators.}\;
\For{each element $T\in\mathcal{T}_h(\omega_i)$}
{
Compute
$$
R_T
=
\sum_{\ell}
w_T^\ell
\bigl(K^*_{\omega_i}(\bx_T^\ell)-K_T\bigr)^2,
$$
where $\{\bx_T^\ell,w_T^\ell\}$ are quadrature points and weights on $T$.\;
}

\BlankLine
\textbf{(3) Mark elements for refinement.}\;
Let $\mathcal{E}_{\mathrm{worst}}\subset\mathcal{T}_h(\omega_i)$ denote the set of the $K_{\mathrm{top}}$ elements with the largest indicators $R_T$\;
If $\mathcal{E}_{\mathrm{worst}}=\emptyset$, \textbf{break}\;

\BlankLine
\textbf{(4) Adaptive enrichment.}\;
\For{each $T\in\mathcal{E}_{\mathrm{worst}}$}
{
Add new RBFs according to the refinement rule in Section~3.4 and append them to $\mathcal{B}_{\mathrm{new}}$\;
}
{If $\mathcal{B}_{\mathrm{new}}=\emptyset$, \textbf{break}\;}

\BlankLine
\textbf{(5) Update dictionary.}\;
Update $\mathcal{D}_{\omega_i} \gets \mathcal{D}_{\omega_i}\cup \mathcal{B}_{\mathrm{new}}$\;
Update $M_i' \gets M_i' + |\mathcal{B}_{\mathrm{new}}|$\;
Recompute the feature matrix $\Phi_{\omega_i}$ for the updated dictionary and extend $\boldsymbol{\beta}^{(i)}$ by zeros\;
}

\KwOut{Final coefficient vector $\boldsymbol{\beta}^{(i)}$ and refined RBF dictionary $\mathcal{D}_{\omega_i}$\;}
}
\end{algorithm}

\clearpage
\newpage

\subsection{One-Dimensional Examples: Uniform and Adaptive RBF Approximation}

Here, we provide several examples to illustrate the details of our approach. First, we present the general performance of RBFs in capturing sharp transitions in a one-dimensional setup. Since this example is one-dimensional, we write the spatial variable as
$x\in\Omega\subset\mathbb{R}$ instead of the vector notation $\bx$.

\subsubsection{Example 1. Baseline RBF approximation of a step function}
Here, we consider the one-dimensional domain $\Omega = [0,0.03125]$ where the permeability is defined as a step function:
\begin{equation}
   K(x) \;=\;
   \begin{cases}
      10^{-4}, & x < x_0, \\[0.3em]
      10^{-1}, & x \ge x_0,
   \end{cases}
   \qquad x \in \Omega,
\label{eqn:2.4_step}
\end{equation}
with a sharp transition at $x_0 = 0.015625$. The function $K(x)$ is approximated using two Gaussian RBFs, one centered in each half of the domain, with common width $\sigma=0.0019$. Coefficients are computed via Elastic Net regression. 

Figure~\ref{fig:base_combined} shows that the unnormalized RBF approximation is unbalanced and produces overshoot near the jump, whereas Shepard normalization enforces $\sum_m w_m(x)=1$ and yields a stable, bounded transition. While normalization improves robustness, two bases are insufficient to resolve the discontinuity; additional RBFs are introduced next to improve accuracy.

\begin{figure}[!h]
    \centering
    \begin{subfigure}[b]{0.48\textwidth}
        \centering
        \includegraphics[width=\linewidth]{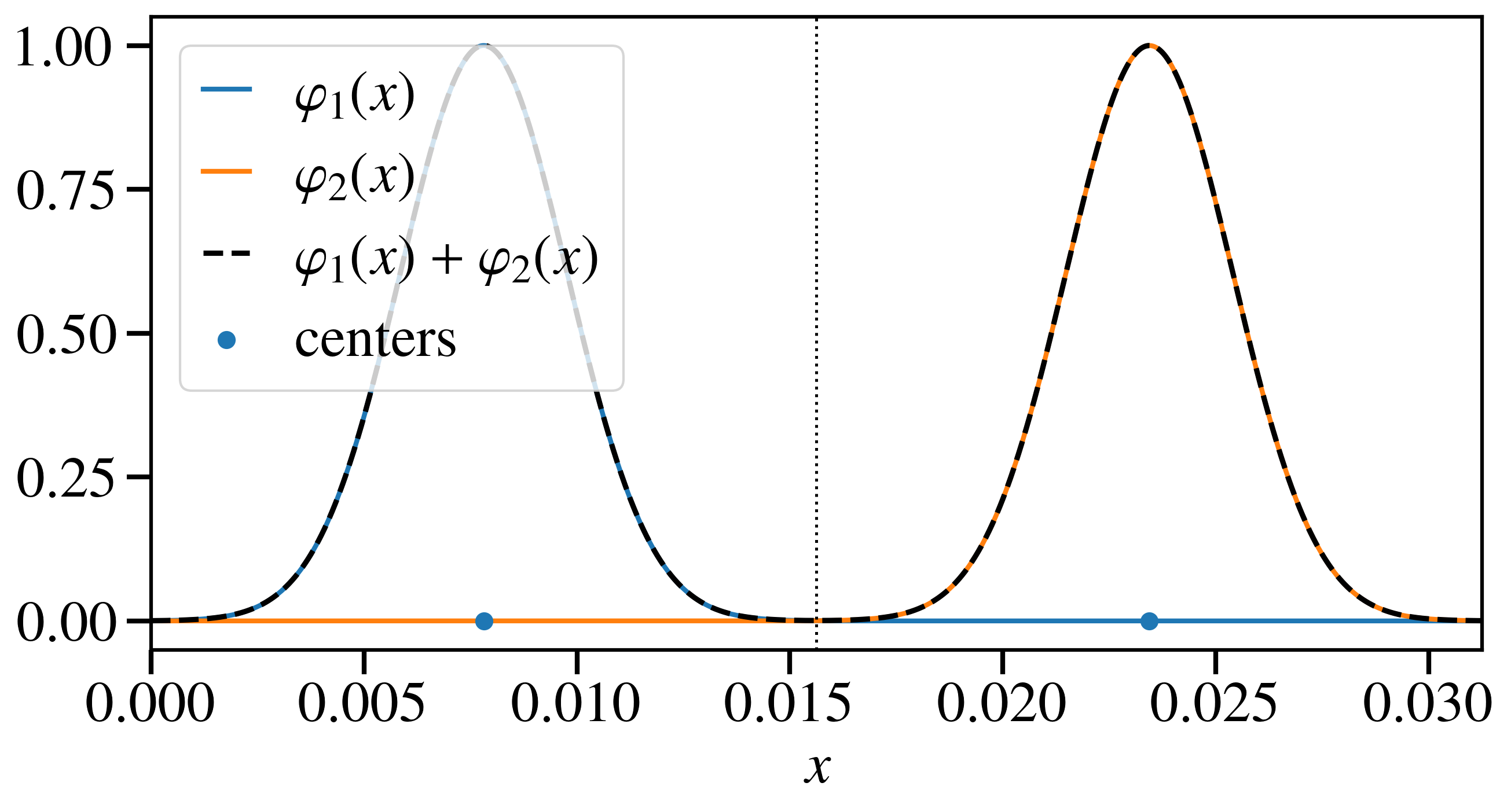}
        \caption{}
        \label{fig:base_phi_raw}
    \end{subfigure}
    \hfill
    \begin{subfigure}[b]{0.48\textwidth}
        \centering
        \includegraphics[width=\linewidth]{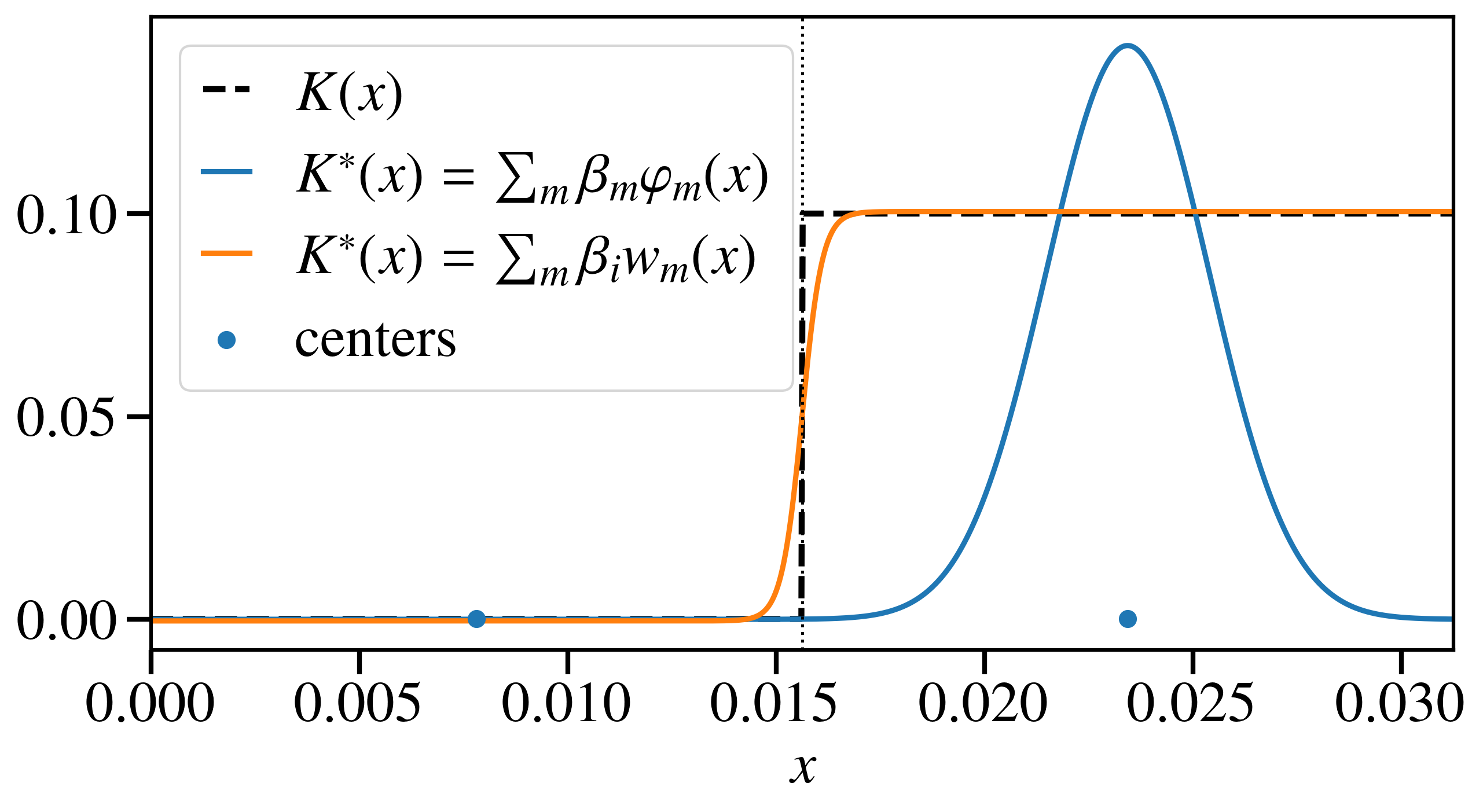}
        \caption{}
        \label{fig:base_rbf_comparison}
    \end{subfigure}
    \caption{Example 1. (a) Unnormalized Gaussian basis functions $\varphi_1(x)$ and $\varphi_2(x)$ and their sum $\varphi_1+\varphi_2$.
    (b)  Comparison of the unnormalized and Shepard-normalized reconstructions of $K(x)$.}
    %Corresponding $K(x)$ reconstructions using raw and Shepard-normalized RBFs.}
    \label{fig:base_combined}
\end{figure}

\subsubsection{Example 2. Uniform refinement with Shepard-normalized RBFs}

Using the same function $K(x)$ defined in \eqref{eqn:2.4_step}, we investigate the effect of uniformly increasing the number of Shepard-normalized RBFs on the reconstruction accuracy. The function $K(x)$ is approximated with different numbers of Shepard-normalized RBFs using the normalized construction described in Section~\ref{subsec:shepard}, where the coefficients $\beta_m \in \mathbb{R}$ are obtained by solving the Elastic Net regression problem described in equation~\eqref{elastic}. The regularization parameters are set to $\lambda_1 = 4.59 \times 10^{-4}$ and $\lambda_2 = 4.64 \times 10^{-6}$. We consider uniform center placement with M=2,4,8, and 16 and examine three kernel widths: $\sigma$, $\sigma/2$, and $\sigma/4$, where $\sigma=0.0019$. 

Figure{\color{red}~}\ref{fig:elasticnet_rbf} shows the resulting Shepard-normalized RBF reconstructions with $\sigma=0.0019$. As M increases, the approximation converges rapidly in the smooth regions away from the interface. However, near the jump at $x_0=0.015625$, uniform refinement produces spurious oscillations on both sides of the jump.    
Figure~\ref{fig:rbf_convergence_uniform_vs_adaptive} presents the convergence behavior of the RBF approximation for the function $K(x)$ under uniform refinement. Results are shown for several fixed kernel widths $\sigma$. The relative $L^2(\Omega)$ error,

\begin{equation}\label{eq:L2error}
\frac{\|K-K^*\|_{L^2(\Omega)}}{\|K\|_{L^2(\Omega)}}
=
\frac{\left(\sum_{T\in\mathcal{T}_h}\int_T \bigl(K_T-K^*(\bx)\bigr)^2\,d\bx\right)^{1/2}}
{\left(\sum_{T\in\mathcal{T}_h}\int_T K_T^2\,d\bx\right)^{1/2}}.
\end{equation}
exhibits oscillatory behavior across all tested values of $\sigma$ and remains in the range $10^{-3}$ to $10^{-2}$.

\begin{figure}[!h]
    \centering

    \begin{subfigure}[!h]{0.48\textwidth}
        \centering
        \includegraphics[width=\linewidth]{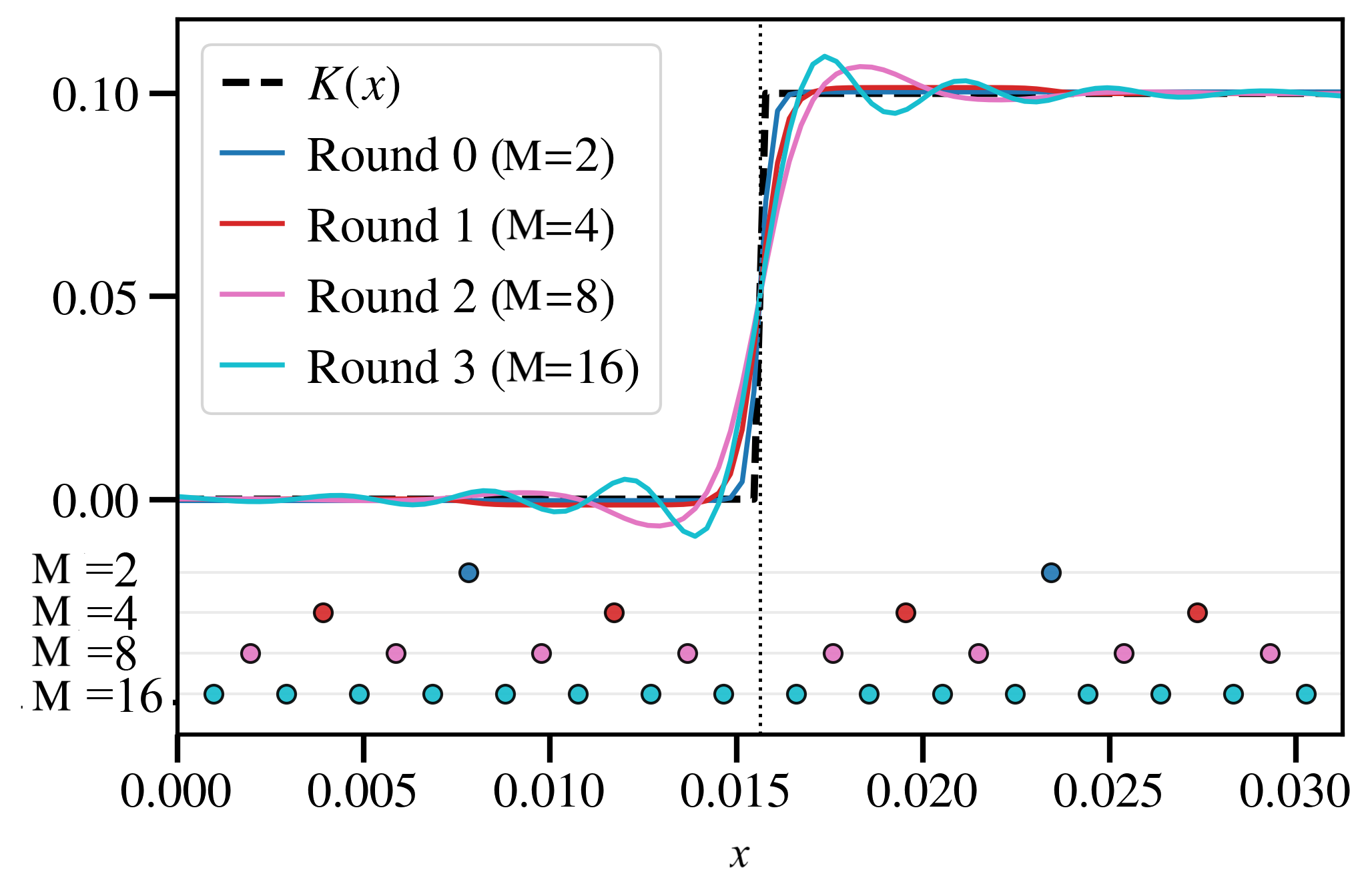}
        \caption{}
        \label{fig:elasticnet_rbf}
    \end{subfigure}
    \hfill
    \begin{subfigure}[!h]{0.48\textwidth}
        \centering
        \includegraphics[width=\linewidth]{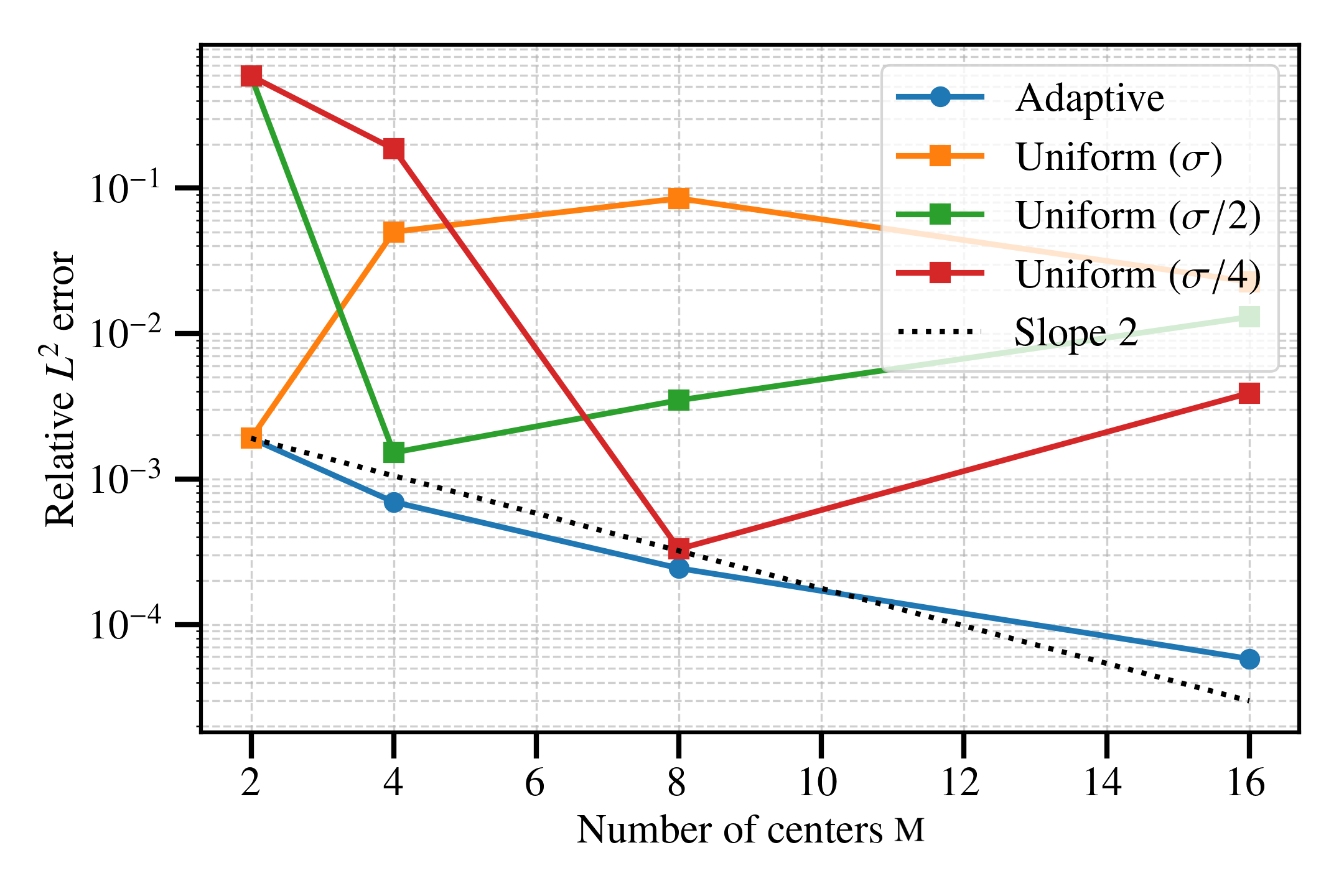}
        \caption{}
        \label{fig:rbf_convergence_uniform_vs_adaptive}
    \end{subfigure}

    \caption{Example 2. (a) Elastic Net approximation of the step function using M=2,4,8, and 16 RBFs. Circles indicate RBF centers. (b) Convergence of the relative $L^2(\Omega)$ error under uniform and adaptive RBF refinement. Uniform results are shown for multiple kernel widths $\sigma$.}
    \label{fig:step_uniform}
\end{figure}

\subsubsection{Example 3. Adaptive RBF refinement}

To address the oscillatory behavior observed under uniform refinement in the previous example, we now apply the proposed adaptive Elastic Net RBF strategy described in Algorithm~\ref{algo:adaptive}. Rather than uniformly enriching the basis across the domain, the adaptive procedure introduces new RBFs only in regions where the residual is large, with correspondingly reduced kernel widths $\sigma$. This targeted enrichment progressively sharpens the local resolution near the jump while keeping the total number of basis functions small.

Figure~\ref{fig:one_d_adaptive} illustrates the adaptive refinement process for the step function. As the refinement proceeds, additional RBF centers are concentrated near the jump at $x_0=0.015625$, leading to a sharp and stable reconstruction of the discontinuity.

\begin{figure}[!h]
    \centering

    \hfill
    \begin{subfigure}[!h]{0.48\textwidth}
        \centering
        \includegraphics[width=\linewidth]{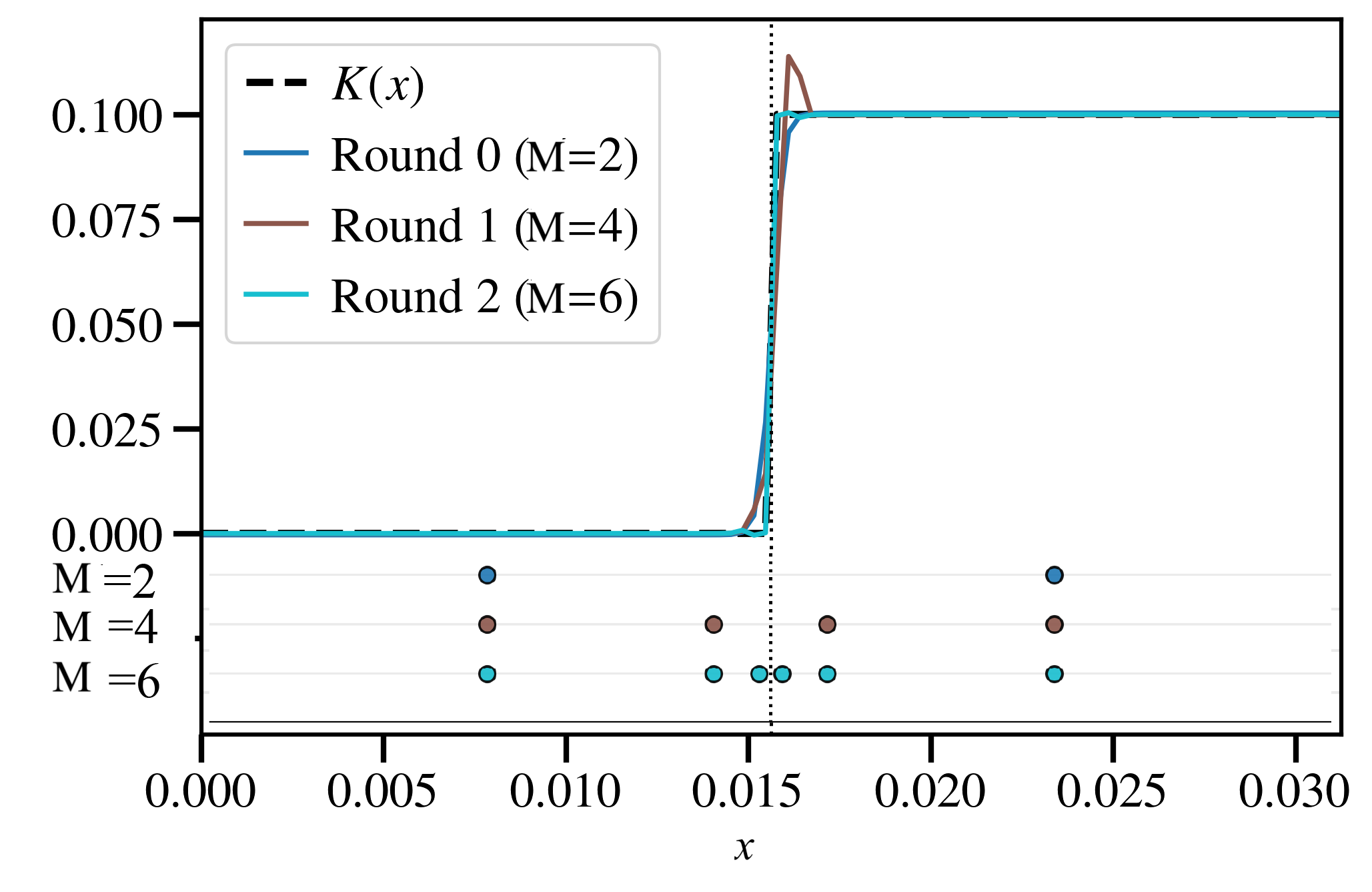}
        \caption{}
        \label{fig:one_d_adaptive}
    \end{subfigure}
    \hfill
    \begin{subfigure}[!h]{0.51\textwidth}
        \centering
        \includegraphics[width=\linewidth]{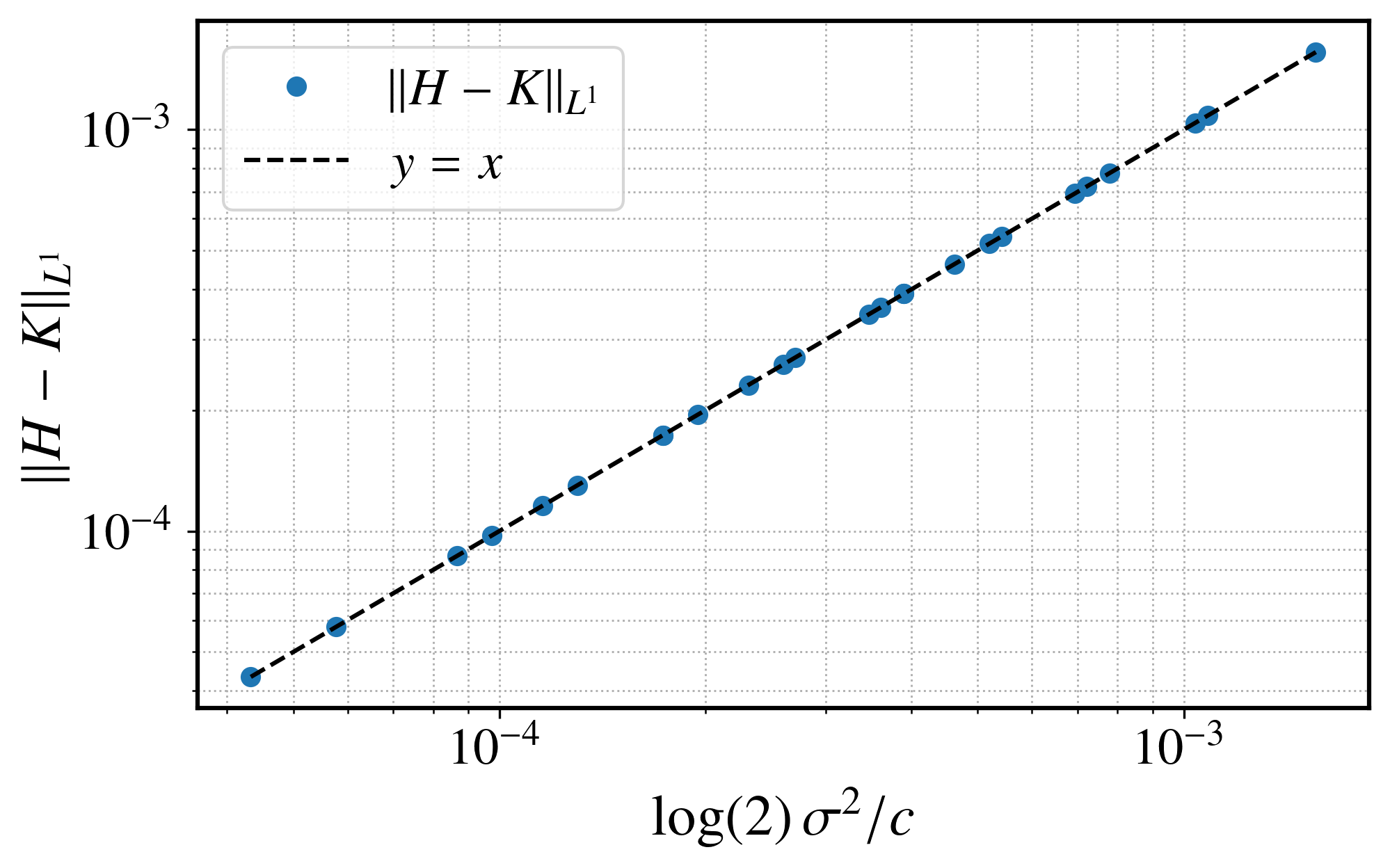}
        \caption{}
        \label{fig:L1_verification}
    \end{subfigure}

    \caption{Example 3.   (a) Adaptive Elastic Net RBF approximation of $K(x)$. (b) Numerical verification of the result
$\|H-K\|_{L^1(\mathbb{R})} = (\log 2)\sigma^2/c$.
Each marker corresponds to a different pair $(c,\sigma)$.
The dashed line $y=x$ represents perfect agreement.}
    \label{fig:step_uniform_vs_adaptive}
\end{figure}

The effectiveness of this adaptive strategy is also reflected in the convergence behavior. As shown earlier in the convergence plot (\ref{fig:rbf_convergence_uniform_vs_adaptive}), the adaptive refinement exhibits systematic error reduction as the number of centers increases, achieving accurate resolution of the jump with only a small number of RBFs. In this example, the sharp jump is well captured using just six centers, demonstrating that adaptive localization of basis functions is essential for efficient and stable approximation of $K(x)$.

\section{Analysis of RBF Approximation}

In this section we analyze the ability of radial basis functions (RBFs) with Shepard normalization to approximate discontinuous functions. In particular, we study the approximation of a Heaviside step function,
which represents a sharp interface across a hyperplane. We show that a two-center Gaussian RBF with Shepard normalization produces a logistic-type approximation whose $L^1$ error across the interface can be made arbitrarily small.

The analytical result is first derived in a general $d$-dimensional setting. Since the approximation depends only on the normal coordinate to the interface, the error reduces to a one-dimensional integral. We then verify this analytical formula numerically in the 1D setting.

\subsection{Step Function Approximation by RBFs with Shepard Normalization}

\begin{theorem}
    Let $H$ be the Heaviside step function on $\mathbb{R}$ using the half-maximum convention, i.e., $H(0) = 0.5$.
    For $\bv \in \mathbb{R}^d$ with $\|\bv\|=1$ and $b \in \mathbb{R}$, 
    let $H_d(\bx):= H(\langle \bv,\bx \rangle + b)$ be the $d$-dimensional Heaviside step function along the hyperplane $\{\bx \ | \ \langle \bv,\bx\rangle + b=0\}$.
    For any $\epsilon>0$, there exists a RBF with the Shepard normalization satisfying
    $$
    \int_{\mathbb R}|H(y)-K^*(y)|dy<\epsilon,
    $$
    where $y=\langle\bv,\bx\rangle+b$ and
$K^*(y)=K^*(\bx)$. This quantity represents the approximation error in the normal direction to the hyperplane $\{\bx:\langle\bv,\bx\rangle+b=0\}$, i.e., the error per unit measure along the interface.

    %$$
    %\int_{\mathbb{R}} |H_d(y\bv) - K^*(y\bv)| dy < \epsilon,
    %$$
    %which represents the error per unit length on the hyperplane being less than $\epsilon$.
    % Furthermore, while $K(0)=H(0)$ holds, the absolute error near the origin follows 
    % $|H(x)-K(x)| = 0.5 + O(|x|)$ as $|x| \to 0$.
\end{theorem}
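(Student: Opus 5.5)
Following the outline given at the start of this section, we use a two-center Gaussian RBF placed symmetrically across the hyperplane. Take common width $\sigma>0$, offset $c>0$ and centers
\[
\bc_\pm=\bx_0\pm c\,\bv,
\]
where $\bx_0$ lies on the hyperplane, so $\langle\bv,\bx_0\rangle+b=0$. Set $\beta_+=1$ and $\beta_-=0$. The candidate approximant is then
\[
K^*(\bx)=w_+(\bx)=\frac{\varphi_+(\bx)}{\varphi_+(\bx)+\varphi_-(\bx)}.
\]
The first step is to check that $K^*$ depends only on $y=\langle\bv,\bx\rangle+b$. Decompose $\bx-\bx_0=y\bv+\bx^\perp$ with $\bx^\perp\perp\bv$. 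Then
\[
\|\bx-\bc_\pm\|^2=(y\mp c)^2+\|\bx^\perp\|^2 .
\]
The common factor $\exp(-\|\bx^\perp\|^2/(2\sigma^2))$ cancels in the Shepard quotient, which is exactly where the normalization is used. What remains is
\[
K^*(y)=\frac{e^{-(y-c)^2/2\sigma^2}}{e^{-(y-c)^2/2\sigma^2}+e^{-(y+c)^2/2\sigma^2}}=\frac{1}{1+e^{-2cy/\sigma^2}},
\]
a logistic function with slope parameter $k=2c/\sigma^2$. This also justifies writing $K^*(y)$ in the statement. Note that $K^*(0)=1/2=H(0)$.

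The second step computes the one-dimensional $L^1$ error. The function $H-K^*$ is odd about $0$ in the sense that
\[
1-K^*(y)=K^*(-y),
\]
so
\[
\int_{\mathbb R}|H-K^*|\,dy=2\int_0^\infty\frac{e^{-ky}}{1+e^{-ky}}\,dy=\frac{2}{k}\bigl[-\log(1+e^{-ky})\bigr]_0^\infty=\frac{2\log 2}{k}=\frac{(\log 2)\,\sigma^2}{c}.
\]
This agrees with the formula verified numerically in Figure~\ref{fig:L1_verification}. The integral converges because the integrand decays exponentially, so no truncation argument is needed.

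The final step chooses parameters. Given $\epsilon>0$, pick any $c,\sigma$ with $(\log 2)\sigma^2/c<\epsilon$, for example fix $c$ and take $\sigma<\sqrt{c\epsilon/\log 2}$.

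The main subtlety is interpretive rather than computational. The full $d$-dimensional $L^1$ error is infinite over an unbounded hyperplane, which is why the statement measures error per unit interface measure along the normal direction. It must also be verified that the Shepard denominator never vanishes. This holds because Gaussians are strictly positive, so the normalization is well defined everywhere.
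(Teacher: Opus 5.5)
Your proof is correct and is essentially the paper's argument: two equal-width Gaussians with Shepard normalization and coefficients $(1,0)$, which reduce to a logistic profile in $y$ whose $L^1$ error is $(\log 2)\sigma^2$ divided by the half-distance between the centers. The only difference is parametrization. The paper takes $\bc_1=c\bv$ and $\bc_2=\gamma c\bv$ with $\gamma=-1-2b/c$, which is your choice $\bc_\pm=\bx_0\pm c\bv$ with $\bx_0=-b\bv$ and your $c$ equal to the paper's $c+b$; your version avoids the side condition $c+b>0$ and makes the cancellation of the tangential component more transparent.
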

\begin{proof}
    Consider the RBF with the Shepard normalization of the form:
    \begin{align*}
        K^*(\bx) = \beta_1 \frac{\exp(-\frac{\|\bx-\bc_1\|^2}{2\sigma_1^2})}{\exp(-\frac{\|\bx-\bc_1\|^2}{2\sigma_1^2})+\exp(-\frac{\|\bx-\bc_2\|^2}{2\sigma_2^2})}
        + \beta_2 \frac{\exp(-\frac{\|\bx-\bc_2\|^2}{2\sigma_2^2})}{\exp(-\frac{\|\bx-\bc_1\|^2}{2\sigma_1^2})+\exp(-\frac{\|\bx-\bc_2\|^2}{2\sigma_2^2})}.
    \end{align*}
    Let $\sigma=\sigma_1=\sigma_2>0$, $\bc_1=c\bv$, $\bc_2 = \gamma c\bv$
    where $\gamma$ is to be determined and $c > 0$.
    % Without loss of generality, let $b \ge 0$.
    Note that $\bc_1-\bc_2 = (1-\gamma)c\bv$ and
    $
    \|\bc_1\|^2-\|\bc_2\|^2 = (1-\gamma^2)\|c\bv\|^2.
    $
    % If $1+b/\|\bv\|^2 > 0$, 
    By letting $\gamma = -1-\frac{2b}{c\|\bv\|^2}$, we have
    \begin{align*}
        \frac{\exp(-\frac{\|\bx-\bc_1\|^2}{2\sigma_1^2})}{\exp(-\frac{\|\bx-\bc_1\|^2}{2\sigma_1^2})+\exp(-\frac{\|\bx-\bc_2\|^2}{2\sigma_2^2})}
        % = \frac{1}{1 + \exp(-\frac{2}{\sigma^2}\langle \bx, \bc \rangle)}.
        &= \frac{1}{1 + \exp(\frac{-2\langle \bx, \bc_1-\bc_2 \rangle + \|\bc_1\|^2-\|\bc_2\|^2}{2\sigma^2})}
        = \frac{1}{1 + \exp(\frac{-2(1-\gamma)\langle \bx, c\bv \rangle + (1-\gamma^2)\|c\bv\|^2}{2\sigma^2})}
        \\
        &=\frac{1}{1 + \exp(\frac{-2(b+c)}{\sigma^2}\left[\langle \bx, \bv \rangle +b\right])}
    \end{align*}
    Choose $c$ such that $c + b > 0$.
    By letting $\beta_1 = 1$ and $\beta_2 = 0$, 
    we have $K^*(\bx) = \frac{1}{1 + \exp(-\frac{2(b+c)}{\sigma^2}\left[\langle \bx, \bv \rangle +b\right])}$.
    % {\color{red}Sign inconsistency, we either change the sign of c or flip the coefficient $\beta$} 
    It can be checked that for any $\bx$ such that $\langle \bv, \bx \rangle + b\ne 0$, 
    $|H_d(\bx)-K^*(\bx)|= \frac{1}{1 + \exp(\frac{2(b+c)}{\sigma^2}|\langle \bx, \bv \rangle +b|)}$, which gives
    the error per unit length on the hyperplane of
    \begin{equation}\label{eq:analytic_formula}
    \int_{\mathbb{R}} |H(y) - K^*(y)| dy := \log 2 \cdot \frac{\sigma^2}{c+b},
    \end{equation}
    where $K^*(y) = \frac{1}{1 + \exp(-\frac{2(b+c)}{\sigma^2}y)}$.
    By letting $\frac{\sigma^2}{c+b}$ sufficiently small, the desired result is obtained.
    % Note that the antiderivative of $|H(x)-K(x)|$ is given by 
    % $E(x) = -\frac{1}{k}\log(1+\exp(-kx))$ where $k= \frac{2c}{\sigma^2}$.
    % Hence,
    % \begin{align*}
    %     \int_{-r}^r |H(x)-K(x)|dx 
    %     % &= 2\int_0^r |H(x)-K(x)|dx = 2(E(r)-E(0)) \\
    %     % &= \frac{2}{k}\left(\log 2 - \log(1+\exp(-kr)) \right) \\
    %     &= \log \frac{2}{1+\exp(-\frac{2c}{\sigma^2}r)} \cdot \frac{\sigma^2}{c}.
    % \end{align*}
    % The remaining part is readily followed by the Taylor series expansion of the absolut error at $x=0$.
\end{proof}

\subsection{Numerical Verification}
Since both $H_d(\bx)$ and $K^*(\bx)$ depend only on the scalar normal coordinate $y := \langle \bv,\bx\rangle + b,$ the multidimensional approximation reduces to a one-dimensional profile. Defining
$$
H(y) := H_d(\bx), \qquad K(y) := K^*(\bx),
$$
the error reduces to the integral
\begin{equation}\label{error}
\|H-K\|_{L^1(\mathbb R)}
=
\int_{\mathbb R} |H(y)-K(y)|\,dy,
\end{equation}
where $K(y) = \frac{1}{1+\exp\!\left(-\frac{2(b+c)}{\sigma^2}y\right)}.$ From the analytical derivation we obtain 
\begin{equation}\label{eq:1d_error}
    \|H(y)-K(y)\|_{L^1(\mathbb R)}
=
(\log 2)\frac{\sigma^2}{c+b}.
\end{equation}
To verify this formula numerically, we compute the integral $\int_{\mathbb R}|H(y)-K(y)|dy$ for several choices of
$(c,\sigma)$ and compare the numerical values with the theoretical
prediction.

Figure~\ref{fig:L1_verification} plots the numerical error against the
analytical value $(\log 2)\sigma^2/(c+b)$. Each point corresponds to a
different parameter pair $(c,\sigma)$ and we set $b=0$. The dashed line $y=x$ indicates
perfect agreement between theory and computation.

%To validate the analytical result in equation (\ref{eq:analytic_formula}), we compute $L^1$ error $\|H-K\|_{L^1(\mathbb{R})}$ numerically for multiple choices of
%$(c,\sigma)$ and compare it with the theoretical value
%$(\log 2)\sigma^2/c$.

%Figure~\ref{fig:L1_verification} plots the numerically computed error
%against the analytical prediction. Each point corresponds to a distinct
%pair $(c,\sigma)$. The dashed line represents $y=x$, indicating perfect
%agreement. The numerical and theoretical values coincide to approximately
%$10^{-9}$ relative accuracy.
% verification plot

\section{Numerical Experiments}

This section presents numerical experiments to evaluate the accuracy,
stability, and computational efficiency of the proposed PAM framework
for reconstructing heterogeneous permeability fields. All experiments are implemented in Python. The steady-state Darcy problem is solved using the FEniCS finite element library~\cite{logg2012automated, alnaes2015fenics}, while the permeability reconstruction and post-processing are performed using standard scientific computing libraries.

Although the proposed PAM framework is general and applicable to other
PDEs with heterogeneous coefficients, we use the Darcy model as a
representative example in the numerical experiments. Let
$\Omega \subset \mathbb{R}^2$ be a bounded domain whose boundary is decomposed into disjoint Dirichlet and Neumann parts
$\partial\Omega = \Gamma_D \cup \Gamma_N, ~ \Gamma_D \cap \Gamma_N = \emptyset.
$ The Darcy problem is given by
\begin{subequations}\label{eq:darcy_model}
\begin{align}
-\nabla \cdot (K(\bx)\nabla p) &= f && \text{in } \Omega, \\
p &= p_D && \text{on } \Gamma_D, \\
K(\bx)\nabla p \cdot n &= q && \text{on } \Gamma_N .
\end{align}
\end{subequations}
Here, $p := p(\bx): \Omega \to \mathbb{R}$ denotes the pressure,
$f$ is a source/sink term, and $K(\bx)$ is the discontinuous
piecewise-constant permeability field.

The weak formulation of \eqref{eq:darcy_model} reads as follows:
find $p \in H^1(\Omega)$ with $p = p_D$ on $\Gamma_D$ such that
\begin{equation}
\int_{\Omega} K(\bx)\nabla p \cdot \nabla v \, d\bx
=
\int_{\Omega} f v \, d\bx
+
\int_{\Gamma_N} q v \, ds
\qquad \forall v \in V_0,
\end{equation}
where
$$
V_0 = \{ v \in H^1(\Omega) : v = 0 \text{ on } \Gamma_D \}.
$$

In the numerical simulations, the pressure equation is discretized
using the continuous Galerkin finite element method with first-order
basis functions. Although it is possible to extend the formulation to
discontinuous Galerkin~\cite{riviere2008discontinuous} or enriched Galerkin finite element methods~\cite{lee2016locally} to
ensure local mass conservation~\cite{scovazzi2017analytical,arbogast2002implementation,dawson2004compatible}, here we restrict our discussion to the
continuous Galerkin method for simplicity of presentation.
The reconstructed permeability field
$K^*(\bx)$ obtained from the PAM method is then used in place of the
original piecewise-constant permeability field.

We consider two representative permeability fields on
$\Omega=[0,1]^2$, discretized using a $32\times32$ uniform mesh of
elements, as shown in Figure~\ref{fig:case_of_K}. These fields are
treated as fixed test cases throughout the section and are referred to
as \emph{Case~1} and \emph{Case~2}, respectively.

The experiments are organized as follows. We first study the effect of
different initial RBF dictionaries under uniform refinement. Next, we
apply the adaptive enrichment strategy to localize basis functions near
regions of rapid variation and quantify the resulting improvements in
accuracy. We then examine the scalability of the approach using
parallel execution. Finally, we evaluate the impact of the recovered
continuous permeability field on the solution of the Darcy equation.

\begin{figure}[!h]
    \centering
    \includegraphics[width=0.42\linewidth]{figures/K_plot_truth_Perlin.png}
    \includegraphics[width=0.43\linewidth]{figures/K_plot_box_truth.png}
    \caption{
    Representative two-dimensional permeability fields used in the numerical
    experiments:
    (left) Case~1 permeability field;
    (right) Case~2 permeability field.
    }
    \label{fig:case_of_K}
\end{figure}

\subsection{Experiment 1. Effect of the initial RBF dictionary under uniform refinement}
\label{sec:case1_dictionary}
First, we study how the initial RBF dictionary affects permeability reconstruction for
the two test fields introduced in Figure~\ref{fig:case_of_K} (Case~1 and Case~2).
In this experiment, we use a single subdomain ($N=1$, $\omega_1=\Omega$) on
$\Omega=[0,1]^2$. We construct an initial dictionary of Shepard-normalized Gaussian RBFs whose
centers are placed on a uniform $g\times g$ lattice over $\Omega$ with corresponding width $\sigma$. We vary the resolution parameter $g$ (and hence $\sigma$) to quantify the
accuracy-cost trade-off under uniform refinement. For each $g$, the Elastic Net parameters $(\lambda_1,\lambda_2)$ are held fixed across both permeability test cases to ensure a fair comparison.

Table~\ref{tab:recovery_results_combined} reports the $L^2(\Omega)$ error (see {\eqref{eq:L2error}}) and the CPU time. As $g$ increases, accuracy improves systematically for both test fields, while
runtime increases with the dictionary size ($g^2$ basis functions). Case~1 exhibits faster error decay under uniform refinement than Case~2, reflecting the additional difficulty posed by sharp interfaces and {high-contrast} regions.
Overall, the Shepard-normalized Gaussian dictionary provides a stable baseline for both permeability patterns and serves as a reference for the adaptive
enrichment strategy studied next.

\begin{table}[!h]
\centering
\begin{tabular}{|c|c|c|c|c|}
\hline
\multirow{2}{*}{{Dictionary $(g,\sigma)$}} &
\multicolumn{2}{c|}{{\ $\|K-K^*\|_{L^2(\Omega)}$}} &
\multicolumn{2}{c|}{{CPU time (s)}} \\ \cline{2-5}
 & {Case 1} & {Case 2} & {Case 1} & {Case 2}\\ \hline
$(64,\,0.0078)$  & $8.57 \times10^{-5}$ & $7.30 \times10^{-4}$  & 56 & 56  \\ \hline
$(32,\,0.031)$   & $1.55 \times10^{-3}$ & $1.92 \times10^{-3}$  & 14 & 20  \\ \hline
$(16,\,0.0625)$  & $2.28 \times10^{-3}$ & $2.18 \times10^{-3}$  & 5  & 5   \\ \hline
\end{tabular}
\caption{Experiment 1. Reconstruction accuracy and runtime for the two test permeability
fields (Case~1 and Case~2) under uniform RBF refinement. Increasing $g$ (and
reducing $\sigma$) improves accuracy at increased computational cost.}
\label{tab:recovery_results_combined}
\end{table}

\subsubsection{Experiment 1.1: Adaptive RBF refinement}
\label{sec:case2_adaptive}

We now evaluate the proposed \emph{adaptive RBF refinement strategy} described in
Algorithm~\ref{algo:adaptive}. The goal is to assess whether residual-driven
enrichment can achieve the same or better reconstruction accuracy than uniform
refinement while using substantially fewer basis functions.

As in the previous experiment, we consider a single subdomain ($\omega_1=\Omega$) on
$\Omega=[0,1]^2$, discretized by a $32\times32$ mesh of elements. The adaptive
procedure is initialized with a $32\times32$ uniform RBF dictionary
($1024$ centers) and proceeds by iteratively enriching the basis only in regions
where the local residual indicator is largest.

At each adaptive round, elementwise residual indicators are computed, and a fixed number of high-error elements are marked for
refinement. New Gaussian RBFs with reduced width ($\eta_q=1/2$) are then introduced locally
according to the refinement rule specified in
Section~\ref{sec:adaptive_rbf}. In all adaptive experiments, a fixed number of $K_{\mathrm{top}}=204$ elements
(corresponding to approximately $20\%$ of the $32\times32$ mesh) are marked for refinement at each adaptive round. The RBF coefficients are updated by solving the
Elastic Net problem after each enrichment step.

Table~\ref{tab:adaptive_summary} summarizes the refinement history for both test
permeability fields. For comparison, the uniform refinement strategy in Case~1 required a dense
$64\times64$ dictionary ($4096$ centers) to achieve $L^2$ errors of
approximately $8.6\times10^{-5}$ (Case~1 permeability) and
$7.3\times10^{-4}$ (Case~2 permeability). In contrast, the adaptive method reaches
comparable or better accuracy using significantly fewer basis functions.

In particular, for the Case~1 permeability, the adaptive scheme attains a $L^2$ error of $5.6\times10^{-5}$ after three refinement rounds using $2860$ centers{, approximately $30\%$ fewer than} the uniform configuration.
For the Case~2 permeability, five adaptive rounds yield an error of
$1.9\times10^{-5}$ with $4084$ centers, outperforming the uniform approach in accuracy.

These results demonstrate that residual-guided adaptive enrichment effectively
allocates basis functions to spatially important regions such as sharp
interfaces and localized high-contrast zones, allowing the proposed method to
achieve high accuracy.

\begin{table}[!h]
\centering
\begin{tabular}{|c|c|c|c|}
\hline
{Field Type} & 
{Adaptive Round} & 
{\# Centers} & 
{$\|K-K^*\|_{L^2(\Omega)}$}  \\ \hline
Case 1 & Round 0 & 1024 & $1.55 \times 10^{-3}$  \\ 
       & Round 1 & 1636 & $6.66 \times10^{-4}$  \\ 
       & Round 2 & 2248 & $1.11 \times10^{-4}$  \\
       & Round 3 & 2860 & $5.56 \times10^{-5}$  \\
        \hline \hline
Case 2 & Round 0 & 1024 & $1.92 \times 10^{-3}$  \\ 
       & Round 1 & 1636 & $7.17 \times10^{-4}$  \\ 
       & Round 2 & 2248 & $2.97 \times10^{-4}$  \\
       & Round 3 & 2860 & $1.31 \times10^{-4}$  \\
       & Round 4 & 3472 & $4.82 \times10^{-5}$  \\
       & Round 5 & 4084 & $1.94 \times10^{-5}$  \\
       \hline
\end{tabular}
\caption{Experiment 1.1. Adaptive refinement history for the two test permeability fields (Case~1 and Case~2). At each adaptive round, $K_{\mathrm{top}}=204$ elements are marked for refinement and locally enriched with additional RBFs.}

\label{tab:adaptive_summary}
\end{table}

Figure~\ref{fig:Kstar_and_centers_two_row} illustrates the evolution of the
adaptive reconstruction for the Case~2 permeability field.
The top row shows the recovered continuous permeability $K^*(\bx)$ at selected
adaptive rounds, while the bottom row displays the corresponding distribution of
RBF centers.

\begin{figure}[!h]
\centering

% ---------------- Row 1: K* predictions ----------------
\begin{subfigure}[!h]{0.32\textwidth}
  \centering
  \includegraphics[width=\linewidth]{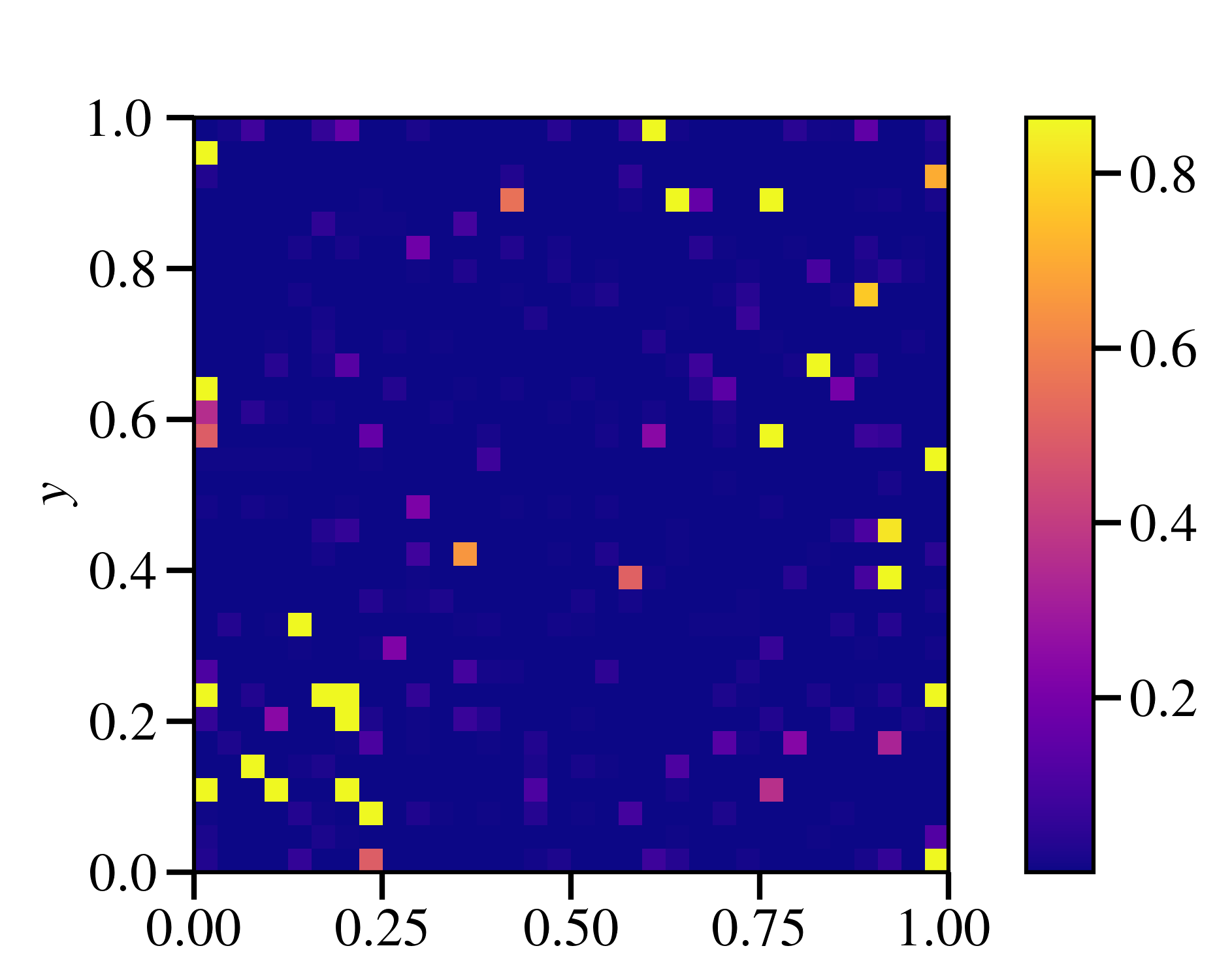}
  \caption{$K^*(\bx)$ (round 0)}
\end{subfigure}\hfill
\begin{subfigure}[!h]{0.32\textwidth}
  \centering
  \includegraphics[width=\linewidth]{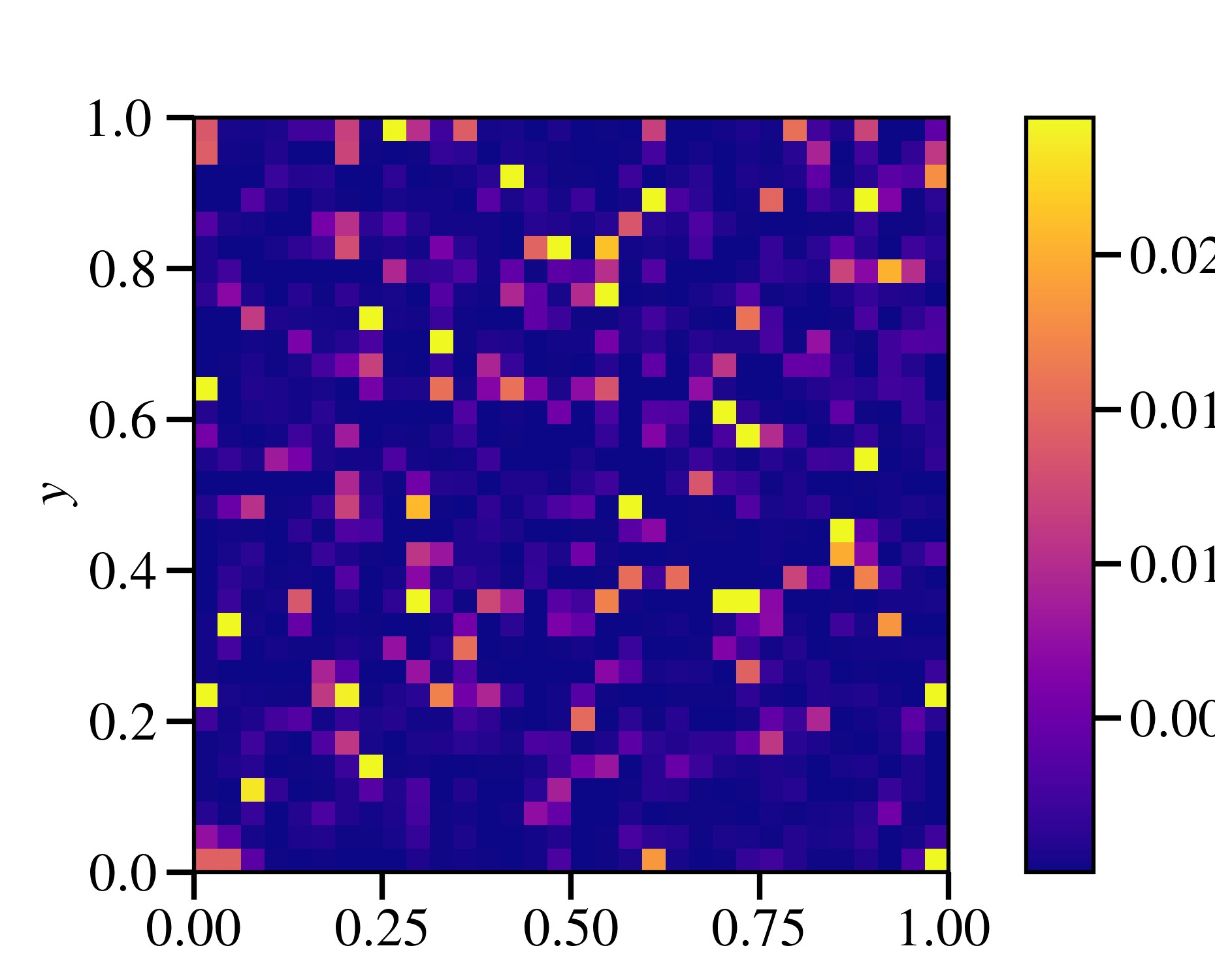}
  \caption{$K^*(\bx)$ (round 3)}
\end{subfigure}\hfill
\begin{subfigure}[!h]{0.32\textwidth}
  \centering
  \includegraphics[width=\linewidth]{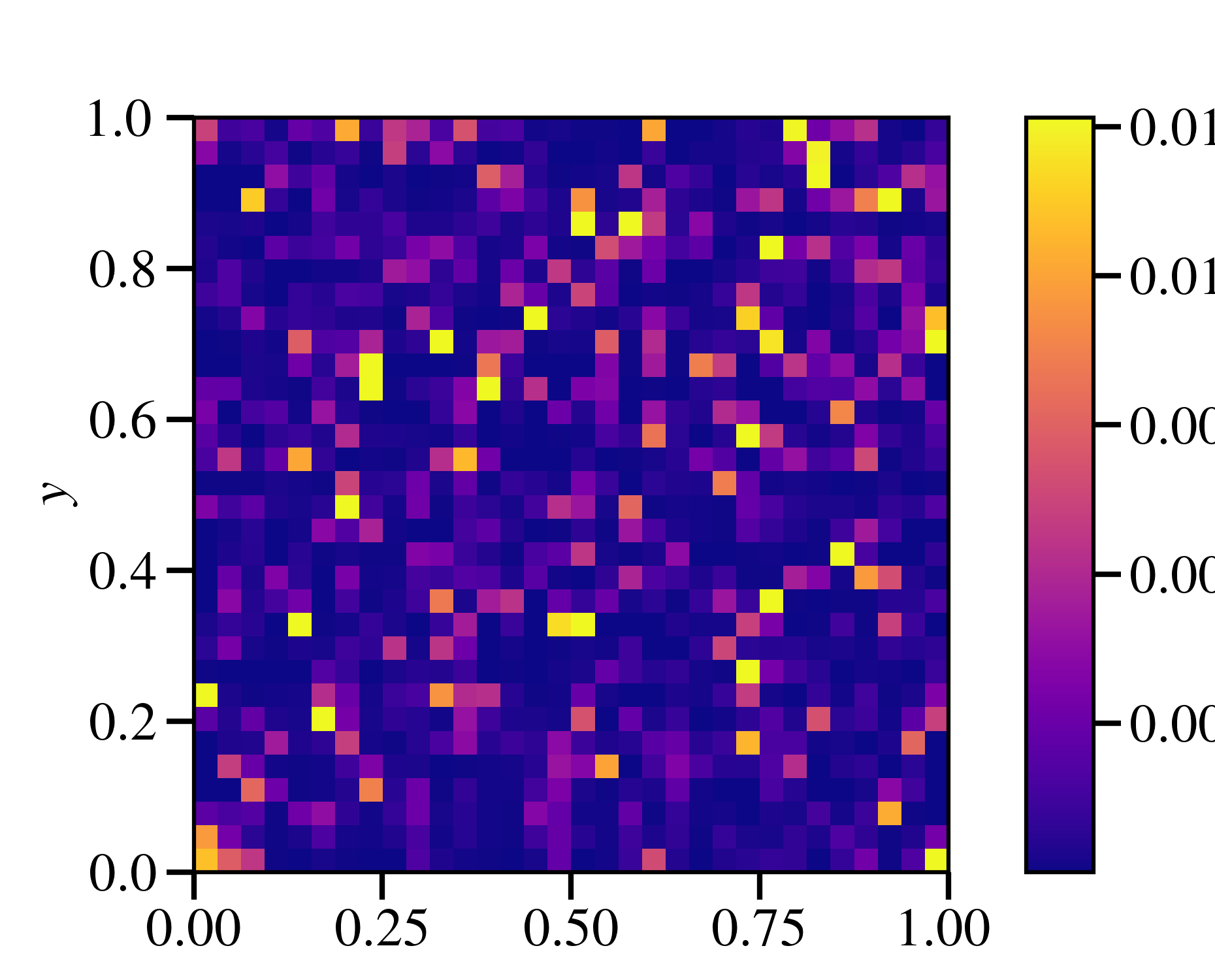}
  \caption{$K^*(\bx)$ (round 5)}
\end{subfigure}

%\vspace{0.35em}

% ---------------- Row 2: center locations ----------------
\begin{subfigure}[!h]{0.32\textwidth}
  \centering
  \includegraphics[width=\linewidth]{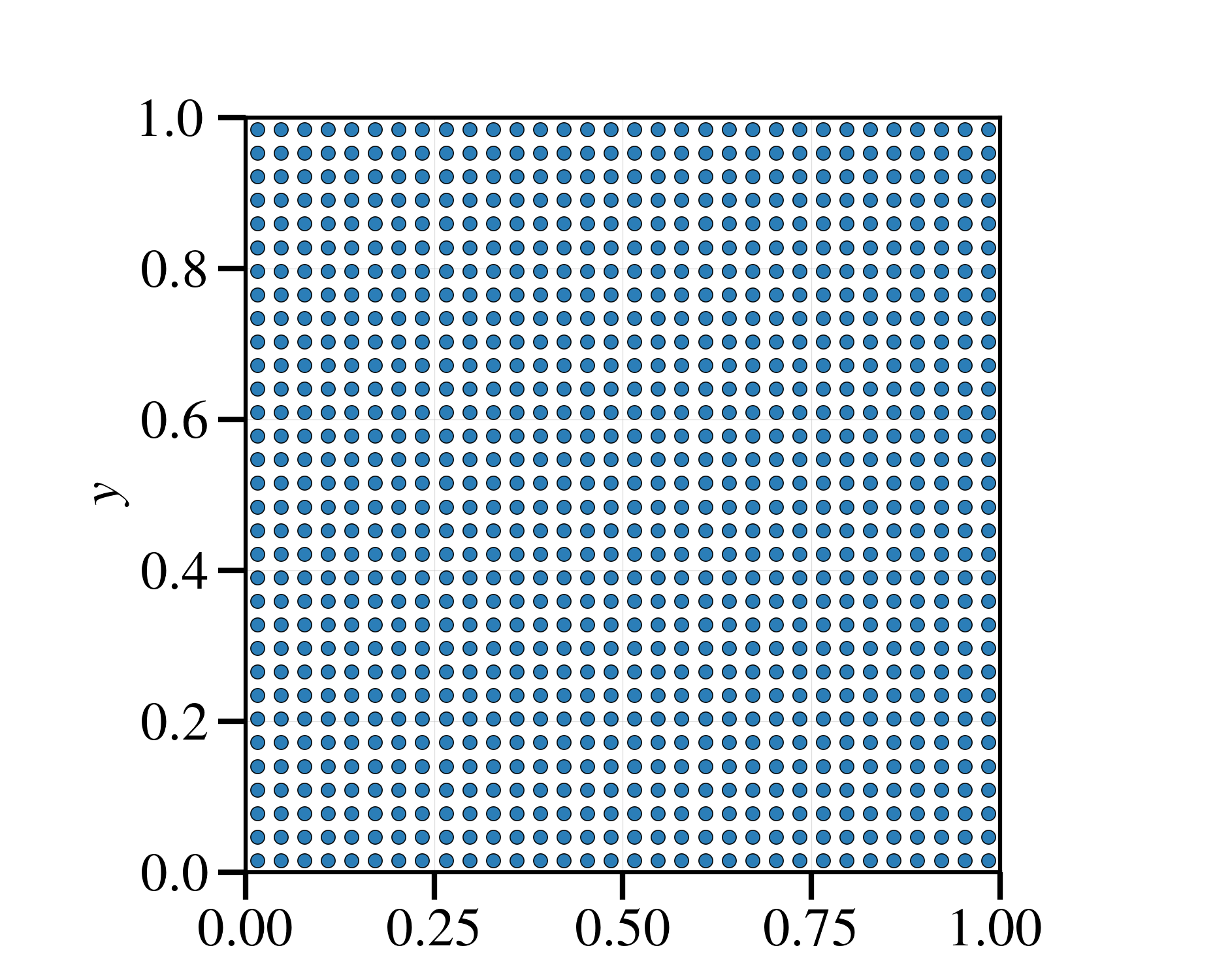}
  \caption{Centers (round 0)}
\end{subfigure}\hfill
\begin{subfigure}[!h]{0.32\textwidth}
  \centering
  \includegraphics[width=\linewidth]{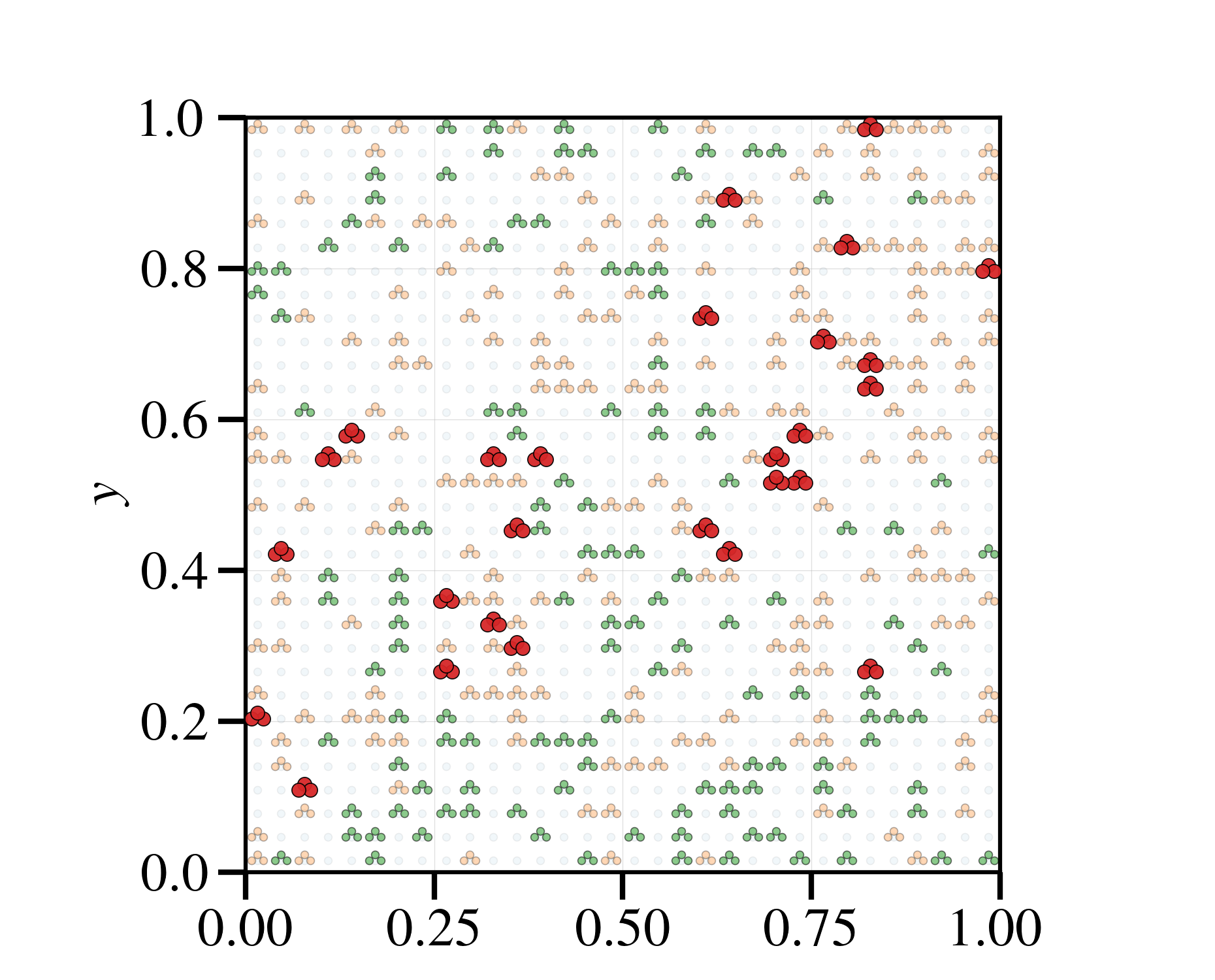}
  \caption{Centers (round 3)}
\end{subfigure}\hfill
\begin{subfigure}[!h]{0.32\textwidth}
  \centering
  \includegraphics[width=\linewidth]{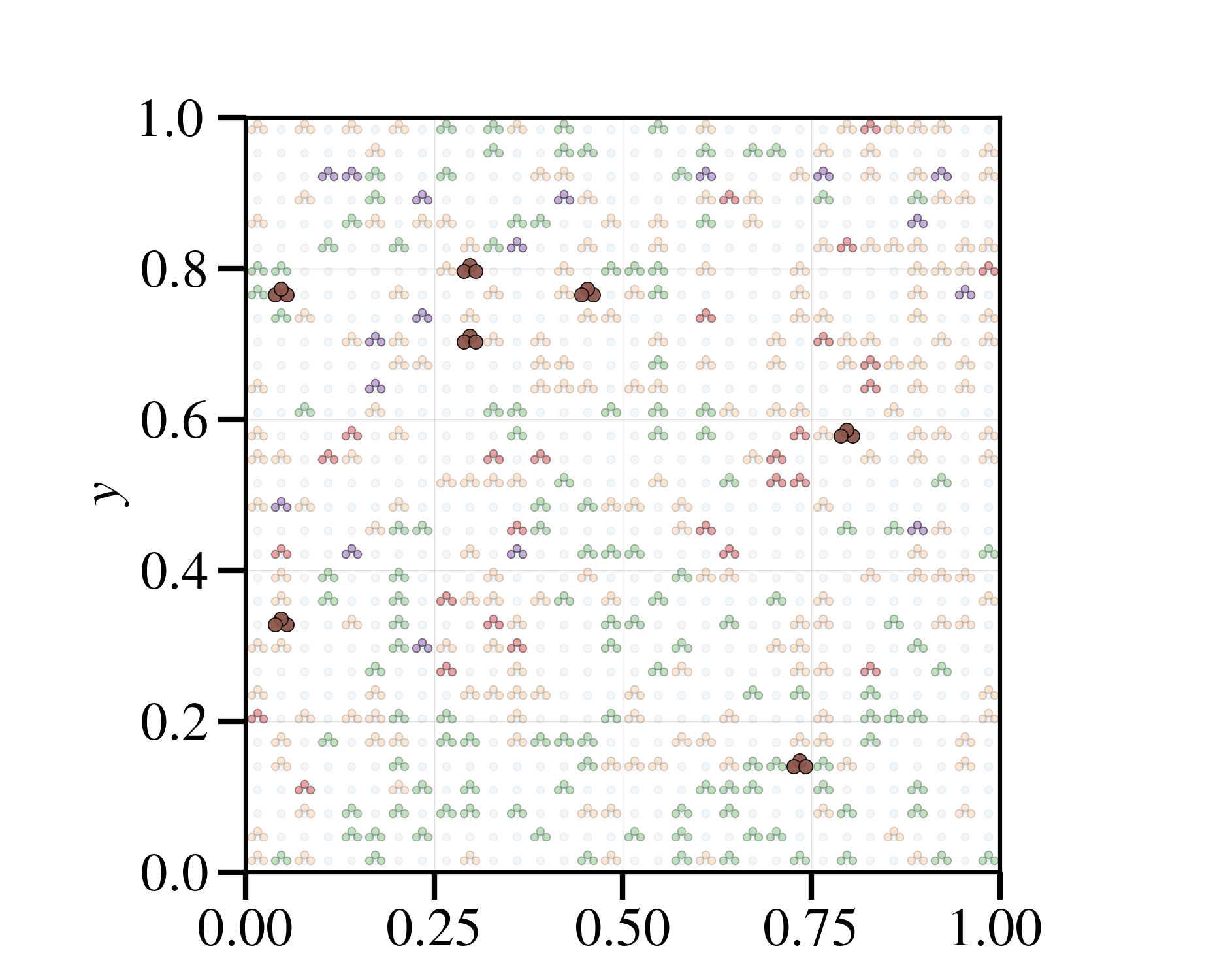}
  \caption{Centers (round 5)}
\end{subfigure}

\caption{Experiment  1.1. Recovered permeability $K^*(\bx)$ (top row) and the corresponding evolution of RBF center locations (bottom row). In the center plots, previously selected centers are shown with lighter markers, while newly added centers in each accepted round are highlighted.}
\label{fig:Kstar_and_centers_two_row}
\end{figure}

\subsubsection{Experiment 1.2: Parallel recovery via domain decomposition}

In this experiment, we examine the parallel performance of the proposed framework by decomposing a fixed computational domain into multiple nonoverlapping subdomains and performing permeability recovery independently on each subdomain.

We consider the same domain $\Omega$, and partition it into $N_s$ nonoverlapping subdomains.
On each $\omega_i$, the local permeability surrogate $K^*_{\omega_i}$ is
constructed independently using Algorithm~\ref{algo:adaptive}. The global reconstructed field $K^*$ is obtained by assembling the $K^*_{\omega_i}$ across all subdomains. Each subdomain problem involves its own local data distribution and RBF dictionary, leading to locally defined regression matrices. As a result, the Elastic Net parameters $(\lambda_1,\lambda_2)$ are selected independently for each $\omega_i$.

To avoid overlap and redundant computation, we adopt a half-open domain decomposition in which each element belongs to exactly one subdomain. This strategy ensures that no data are duplicated across subdomains and enables
embarrassingly parallel execution with minimal inter-process communication.

Table~\ref{tab:recovery_results_combined_parallel} reports the recovery accuracy and CPU time for different domain decompositions. The single domain ($1\times1$) configuration serves as a global baseline. When the domain is decomposed into $2\times2$ subdomains, each local problem
becomes smaller and better conditioned, resulting in a substantial reduction in
computational time. Despite the independent local solves, the global reconstruction accuracy remains
comparable to the baseline case, confirming that the half-open decomposition preserves accuracy. Figure~\ref{fig:kstar_parallel_tiles} visualizes the reconstructed piecewise continuous
permeability field $K^*(\bx)$ obtained using different domain decompositions.

\begin{table}[!h]
\centering
\begin{tabular}{|c|c|c|c|c|c|}
\hline
\multirow{2}{*}{{Subdomains}} & 
\multirow{2}{*}{{Centers in $\omega_i$}} & 
\multicolumn{2}{c|}{$\|K-K^*\|_{L^2(\Omega)}$} & 
\multicolumn{2}{c|}{{CPU (s)}} \\ \cline{3-6}
 & & {Case 1} & {Case 2} & {Case 1} & {Case 2} \\ \hline
$1 \times 1$ & 1024 & $1.55 \times 10^{-3}$  & $1.92 \times 10^{-3}$ & 14 & 20   \\ \hline
$2 \times 2$ & 256 & $1.42 \times 10^{-3}$ & $1.88 \times 10^{-3}$ & 4 & 5 \\ \hline
$2 \times 2$ & 1024 & $1.34 \times 10^{-5}$    & $4.48\times 10^{-4}$ & 12  & 17 \\ \hline
\end{tabular}
\caption{Experiment  1.2. Recovery accuracy for Perlin and Heterogeneous permeability fields using the half-open parallel domain decomposition. 
The first configuration ($1\times1$) provides the global baseline, while the $2\times2$ configuration demonstrates comparable accuracy with a fourfold speed-up due to parallel computation. 
Differences in local $\lambda$ values across subdomains arise from geometric and data density variations, not from numerical instability.}
\label{tab:recovery_results_combined_parallel}
\end{table}

Figure~\ref{fig:kstar_parallel_tiles} visualizes the parallel recovery of the
reconstructed piecewise continuous permeability field $K^*(\bx)$ using two and four
subdomains.

\begin{figure}[!h]
    \centering
    \begin{minipage}{0.48\textwidth}
        \centering
        \includegraphics[width=\textwidth]{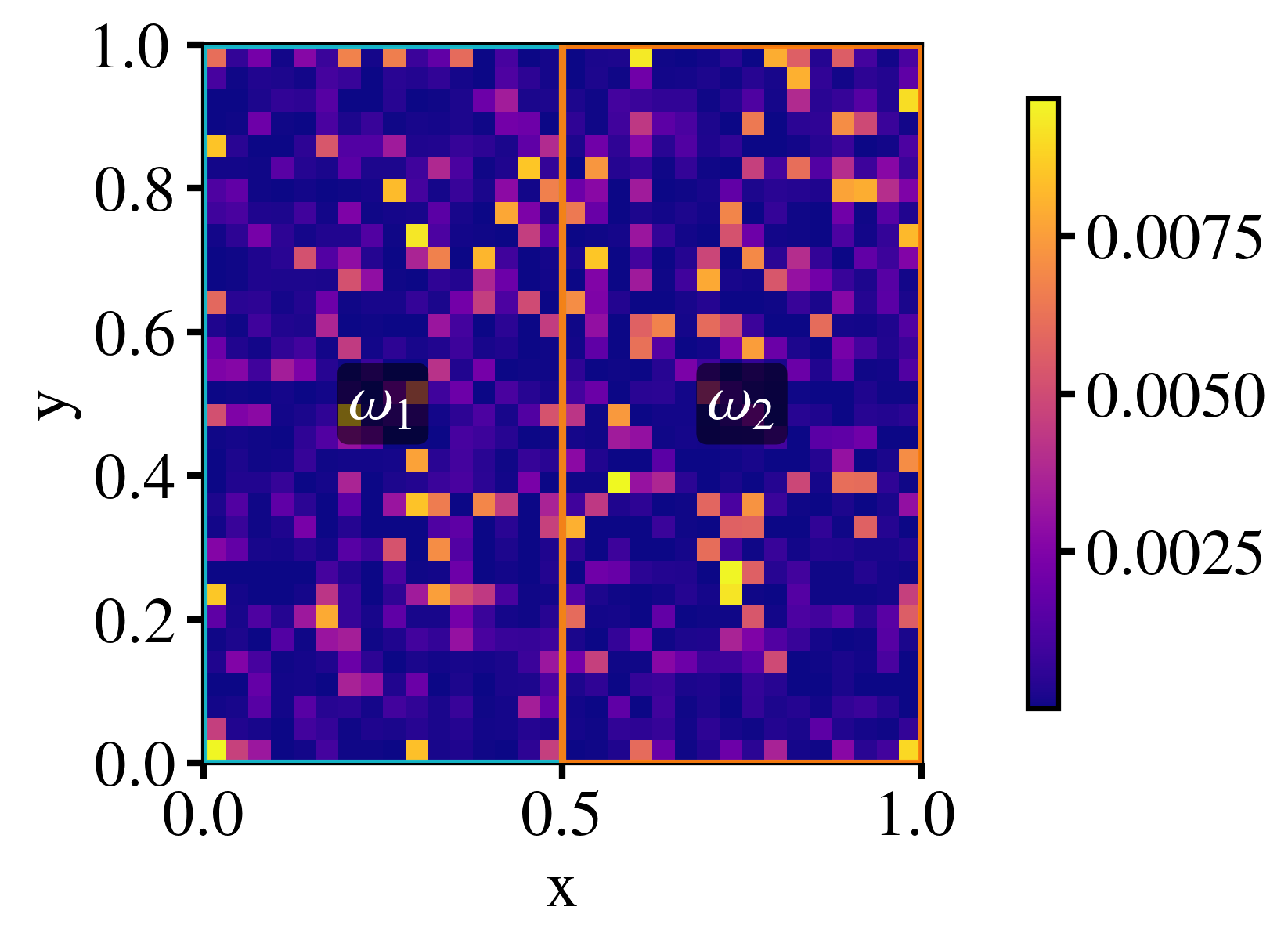}
        \subcaption{$2\times 1$ domain decomposition}
        \label{fig:kstar_parallel_1x2}
    \end{minipage}\hfill
    \begin{minipage}{0.48\textwidth}
        \centering
        \includegraphics[width=\textwidth]{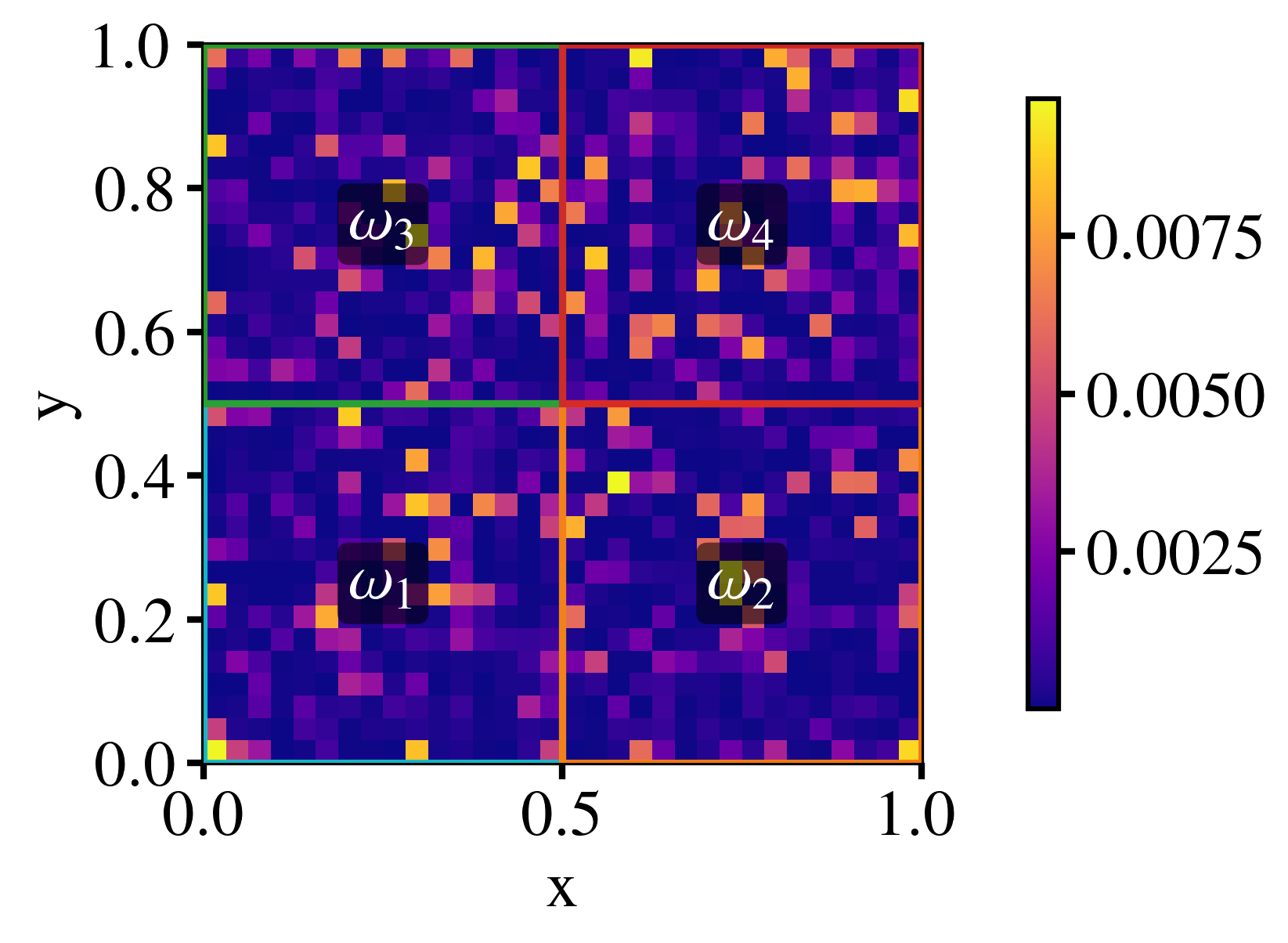}
        \subcaption{$2\times 2$ domain decomposition}
        \label{fig:kstar_parallel_2x2}
    \end{minipage}
    \caption{Experiment  1.2. Parallel recovery of the reconstructed permeability field $K^*(\bx)$using half-open domain decomposition. Left: two subdomains. Right: four subdomains.}

    \label{fig:kstar_parallel_tiles}
\end{figure}

\subsubsection{Pressure computation using the continuous recovered permeability}

We now investigate how the reconstructed continuous permeability
$K^*(\bx)$ affects the solution of the forward Darcy problem.
Using the model problem introduced in \eqref{eq:darcy_model}, we consider
the domain $\Omega=[0,1]^2$ with zero source term ($f=0$). For this
experiment, Dirichlet boundary conditions are imposed on the left and
right boundaries of the domain,
$
p = 1 ~~\text{on } x=0, ~
p = 0 ~~ \text{on } x=1,
$
while homogeneous Neumann conditions are prescribed on the remaining
boundaries.

For reference, the pressure $p$ is computed using the true permeability $K(\bx)$
on a sufficiently fine mesh. The reconstructed pressure $p^*$ is obtained by
solving the same Darcy problem with the continuous permeability $K^*(\bx)$.
Accuracy is quantified by the relative $L^2(\Omega)$ error
\begin{equation}
E_p^{\mathrm{rel}}
=
\frac{\|p - p^*\|_{L^2(\Omega)}}{\|p\|_{L^2(\Omega)}} .
\label{eq:pressure_L2_error}
\end{equation}

Figure~\ref{fig:perm_pressure_pairing} presents the permeability and pressure plots for both test cases.
For Case 1 (top row), the reconstructed permeability $K^*(\bx)$ closely matches the true permeability $K(\bx)$, and the corresponding pressure $p^*$ accurately reproduces the reference solution, yielding $E_p^{\mathrm{rel}} \approx 5.27\times10^{-2}$. For Case 2 (bottom row), which features sharper interfaces, the continuous reconstruction again leads to a stable and consistent pressure solution, with relative error
$E_p^{\mathrm{rel}} \approx 2.66\times10^{-2}$.

These results demonstrate that the recovered continuous permeability not only captures the heterogeneous coefficient structure but also preserves the essential flow behavior when used in the forward Darcy solver.

\begin{figure}[!h]
\centering

% ===================== Case 1: Perlin =====================
\begin{subfigure}[!h]{0.48\textwidth}
  \centering
  \includegraphics[width=\linewidth]{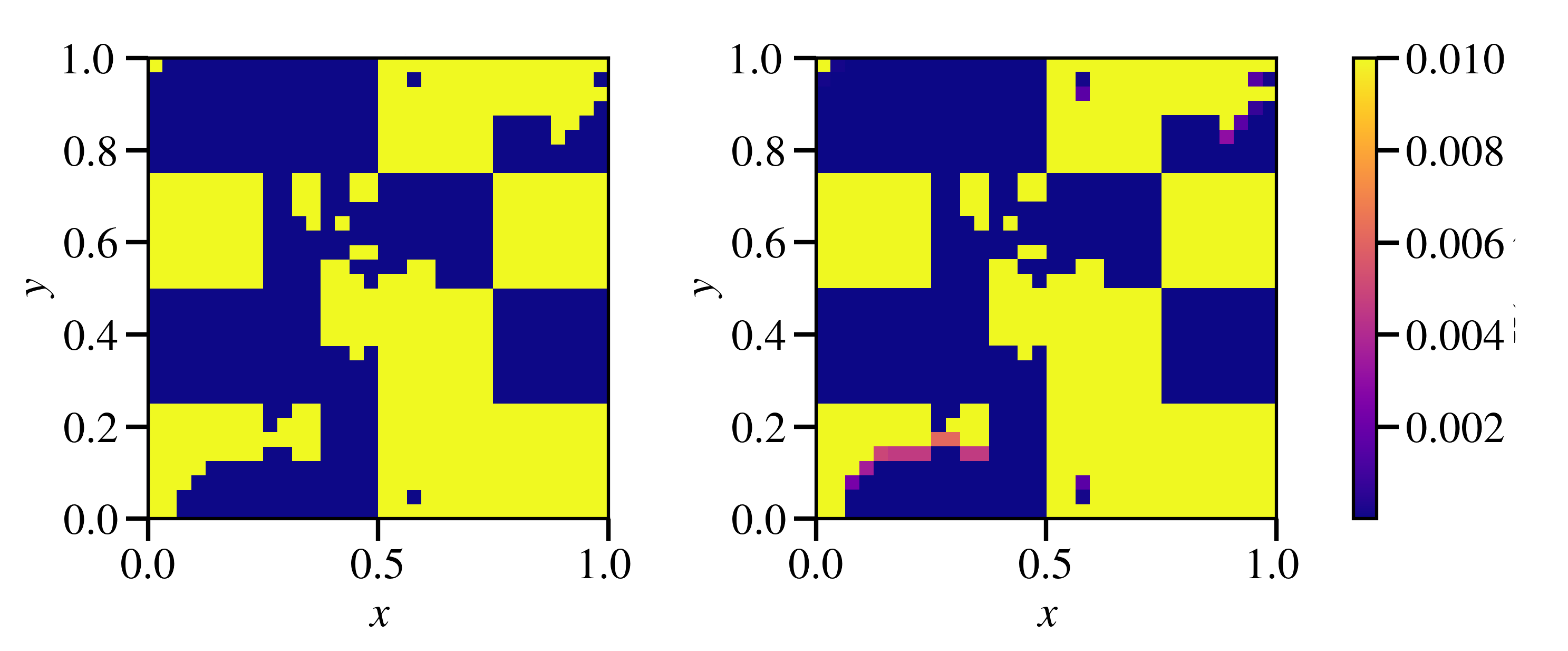}
  \caption{Case 1: $K$ (left) and $K^*$ (right)}
  \label{fig:perlin_perm}
\end{subfigure}\hfill
\begin{subfigure}[!h]{0.48\textwidth}
  \centering
  \includegraphics[width=\linewidth]{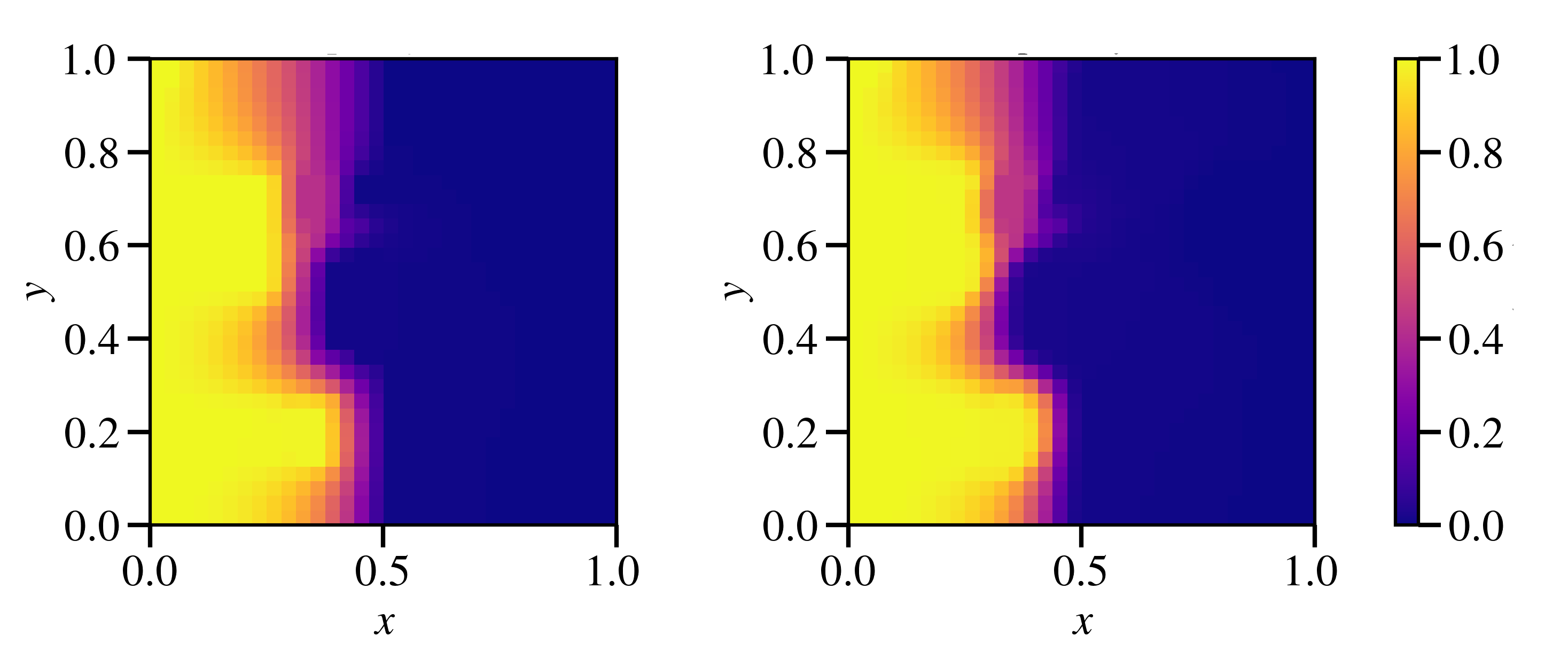}
  \caption{Case 1: $p$ (left) and $p^*$ (right)}
  \label{fig:perlin_pressure}
\end{subfigure}

\vspace{0.75em}

% ===================== Case 2: Box =====================
\begin{subfigure}[!h]{0.48\textwidth}
  \centering
  \includegraphics[width=\linewidth]{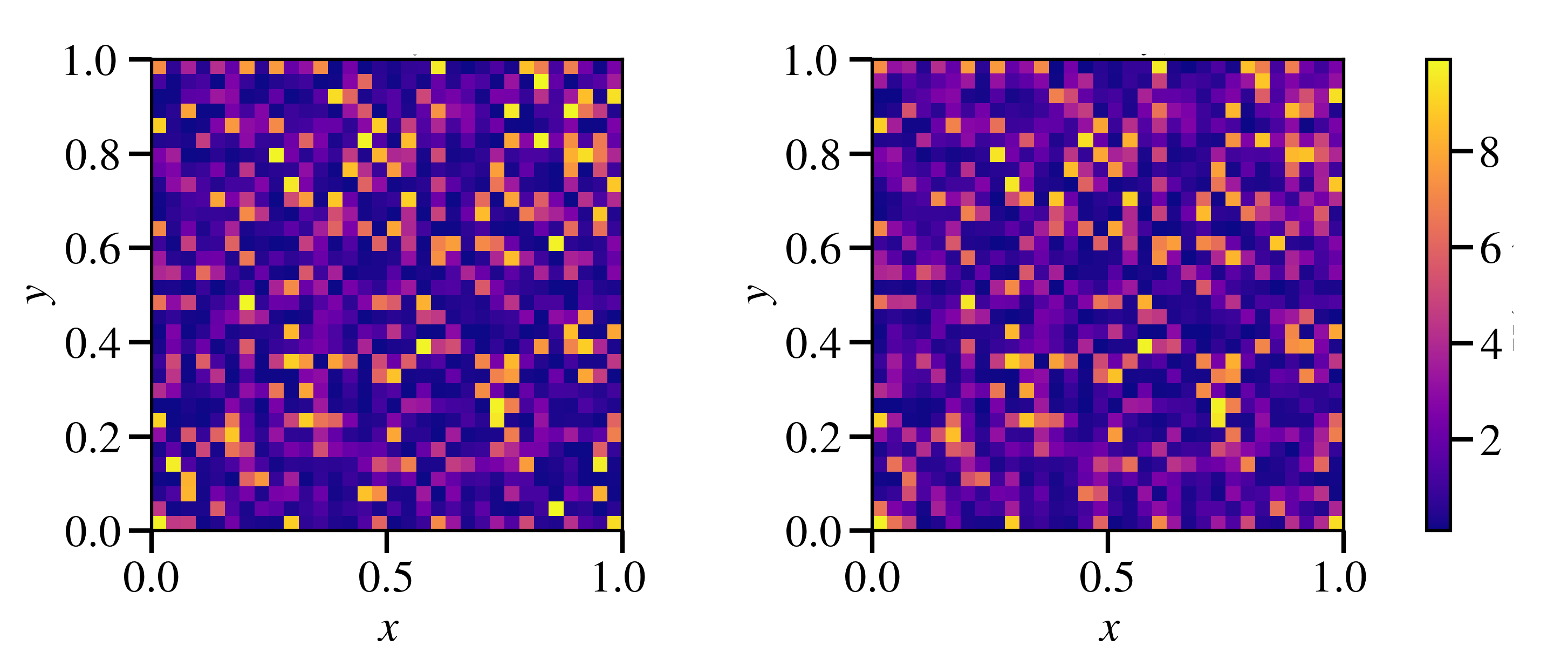}
  \caption{Case 2: $K$ (left) and $K^*$ (right)}
  \label{fig:box_perm}
\end{subfigure}\hfill
\begin{subfigure}[!h]{0.48\textwidth}
  \centering
  \includegraphics[width=\linewidth]{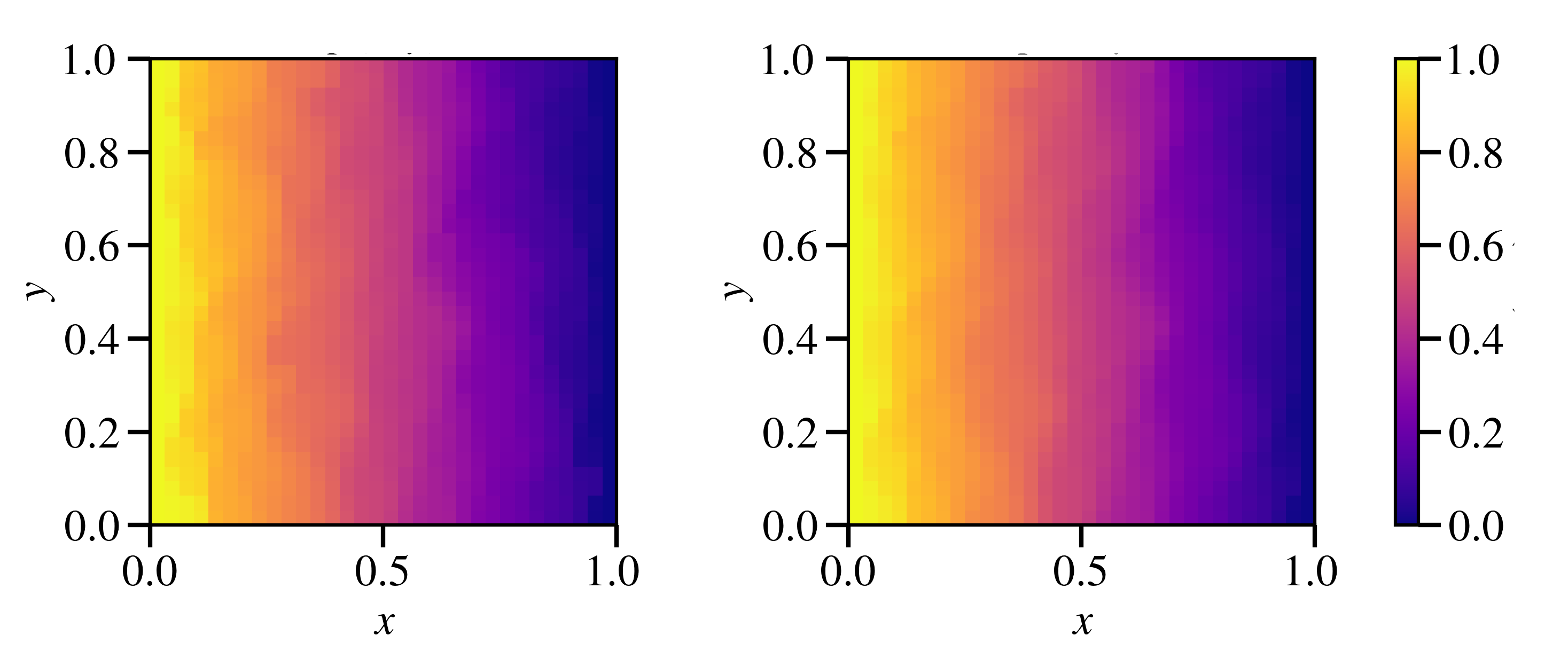}
  \caption{Case 2: $p$ (left) and $p^*$ (right)}
  \label{fig:box_pressure}
\end{subfigure}

\caption{
Permeability and pressure plots for both test cases.
Top row: Case~1 .
Bottom row: Case~2.
In each case, the pressure computed using the reconstructed continuous
permeability $K^*(\bx)$ closely matches the reference solution obtained
with the true permeability.
}
\label{fig:perm_pressure_pairing}
\end{figure}

Since the reconstructed permeability $K^*(\bx)$ is a continuous function, it can be evaluated on arbitrary finite element meshes without recomputing or modifying the permeability approximation.
This allows the same recovered coefficient to be reused across different spatial resolutions in the forward Darcy solver. Figure~\ref{fig:pressure_multires_Kstar} illustrates pressure solutions computed
with the same $K^*(\bx)$ on coarse ($16\times16$) and fine ($64\times64$) meshes for both test cases.

\begin{figure}[!h]
    \centering
    
    % --- Perlin: 16x16 and 64x64 ---
    \begin{minipage}{0.48\textwidth}
        \centering
        \includegraphics[width=\textwidth]{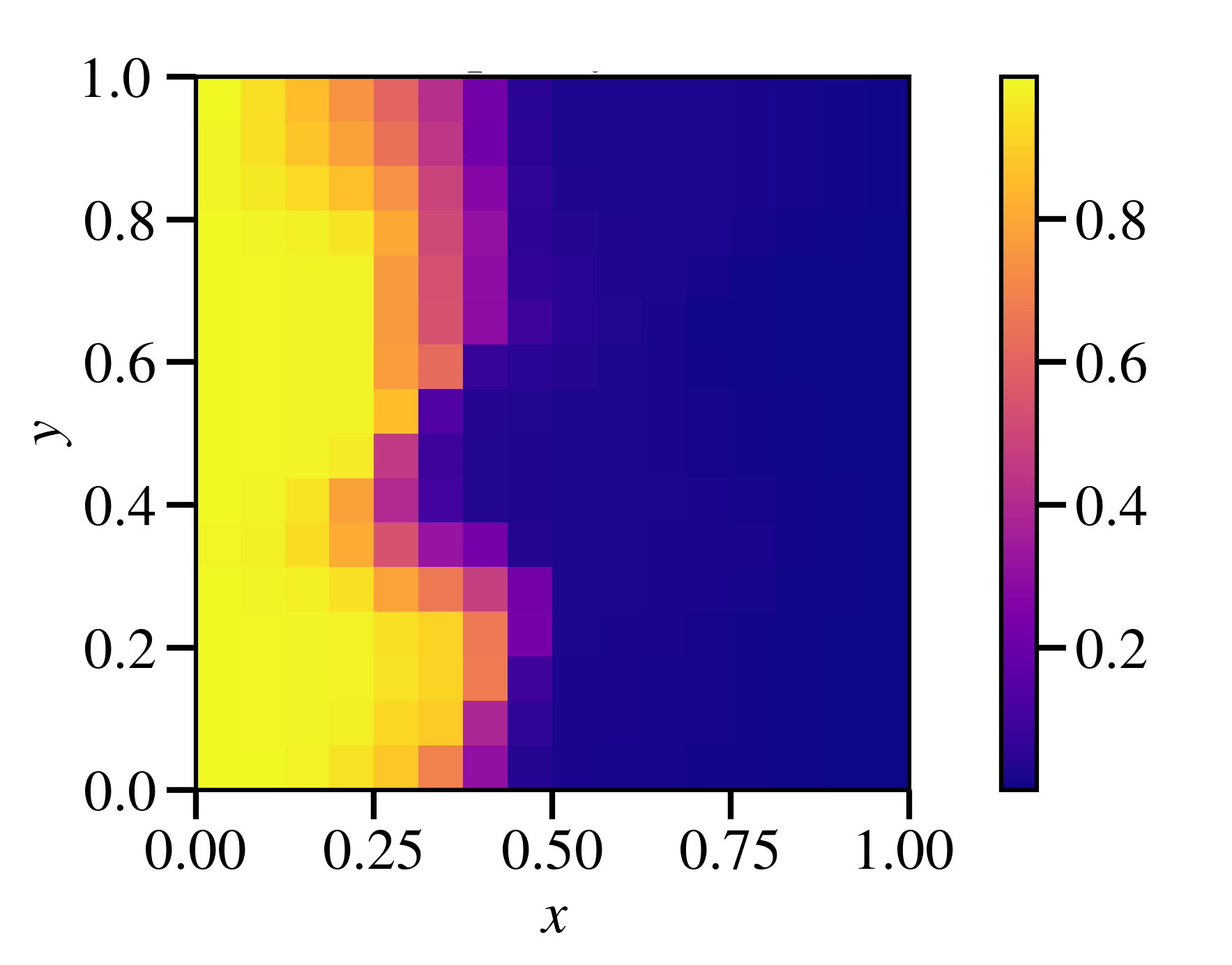}
        {\small (a) Case~1, $16\times16$ mesh}
    \end{minipage}
    \hfill
    \begin{minipage}{0.48\textwidth}
        \centering
        \includegraphics[width=\textwidth]{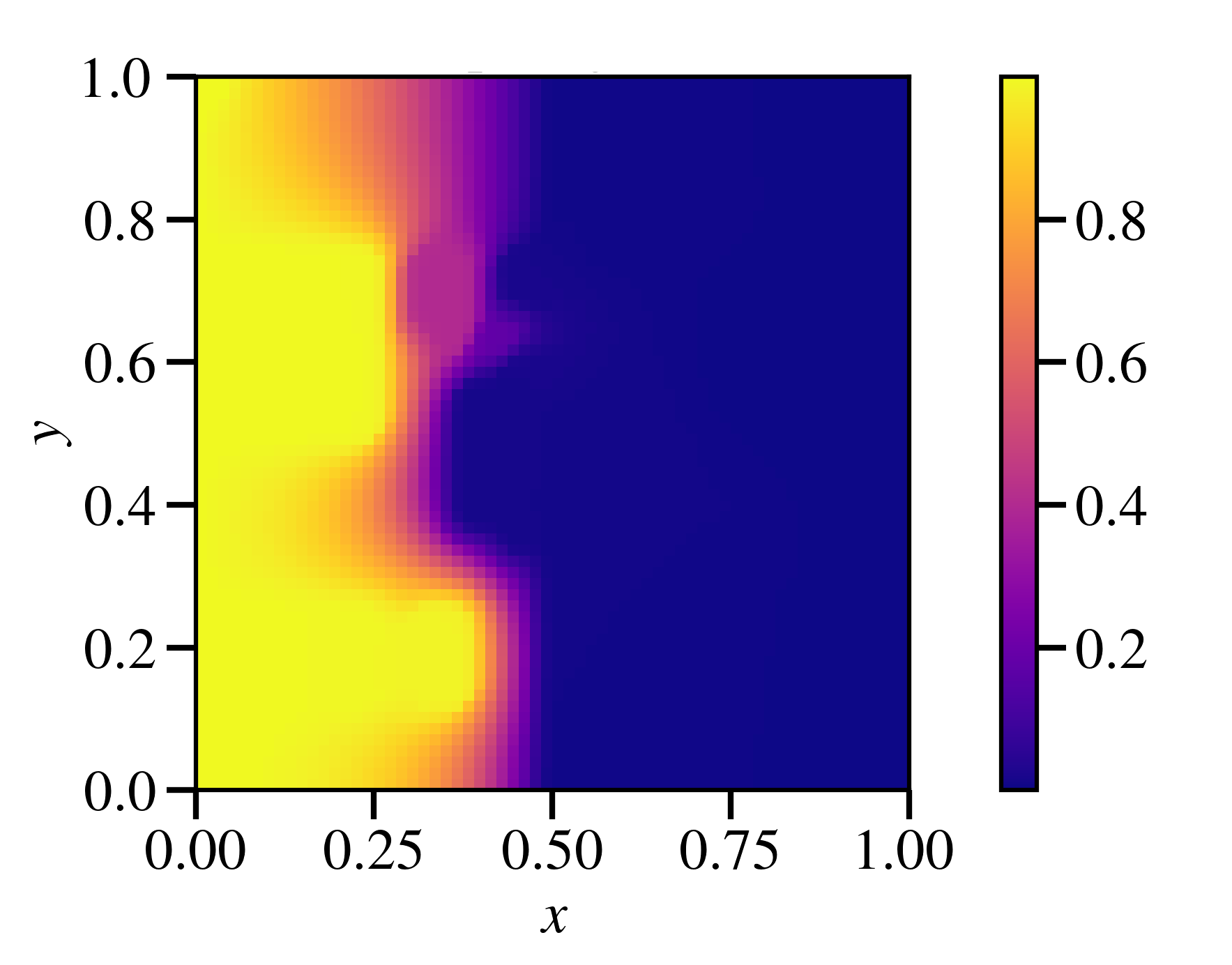}
        {\small (b) Case~1, $64\times64$ mesh}
    \end{minipage}

    \vspace{1em}

    % --- Box: 16x16 and 64x64 ---
    \begin{minipage}{0.48\textwidth}
        \centering
        \includegraphics[width=\textwidth]{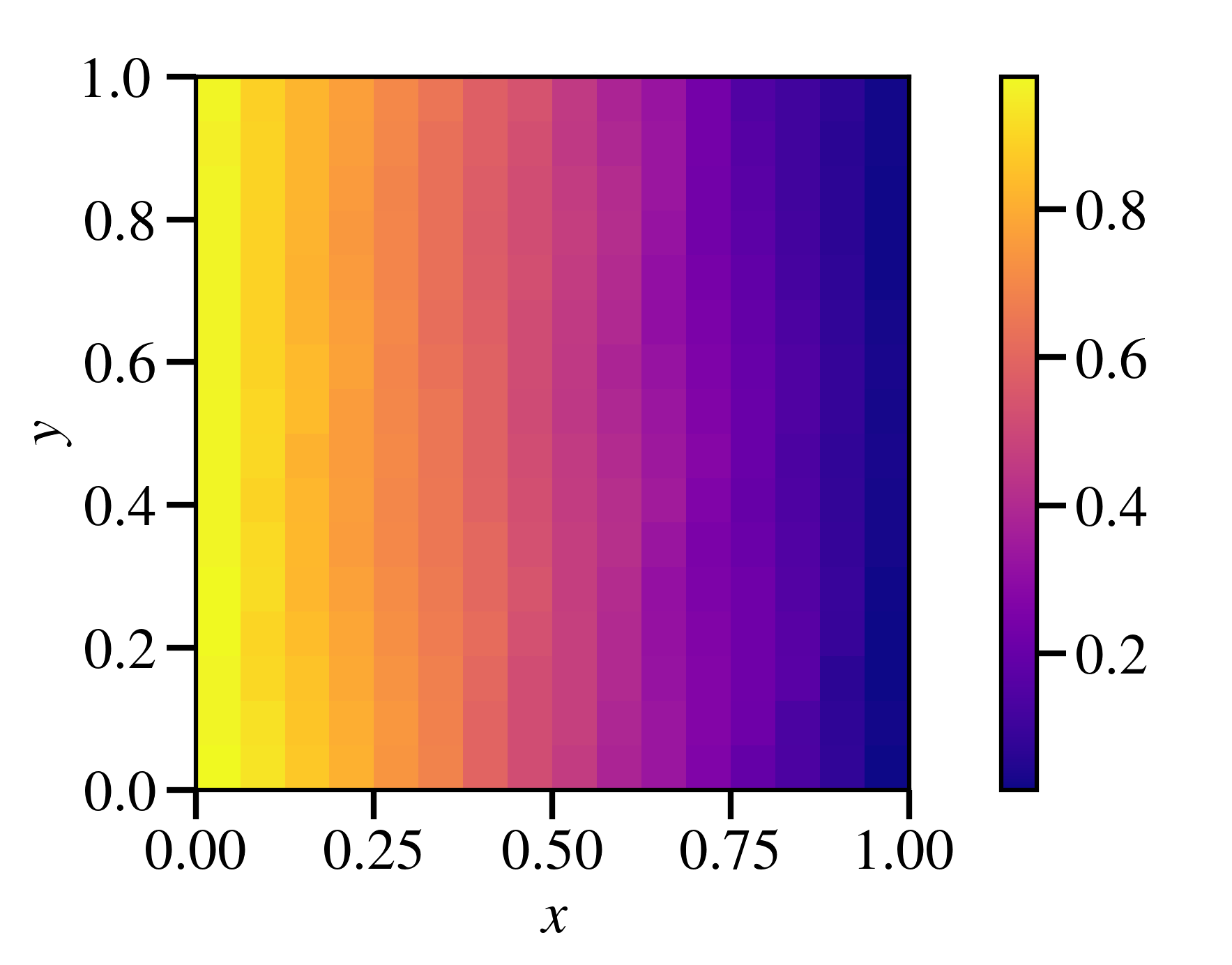}
        {\small (c) Case~2, $16\times16$ mesh}
    \end{minipage}
    \hfill
    \begin{minipage}{0.48\textwidth}
        \centering
        \includegraphics[width=\textwidth]{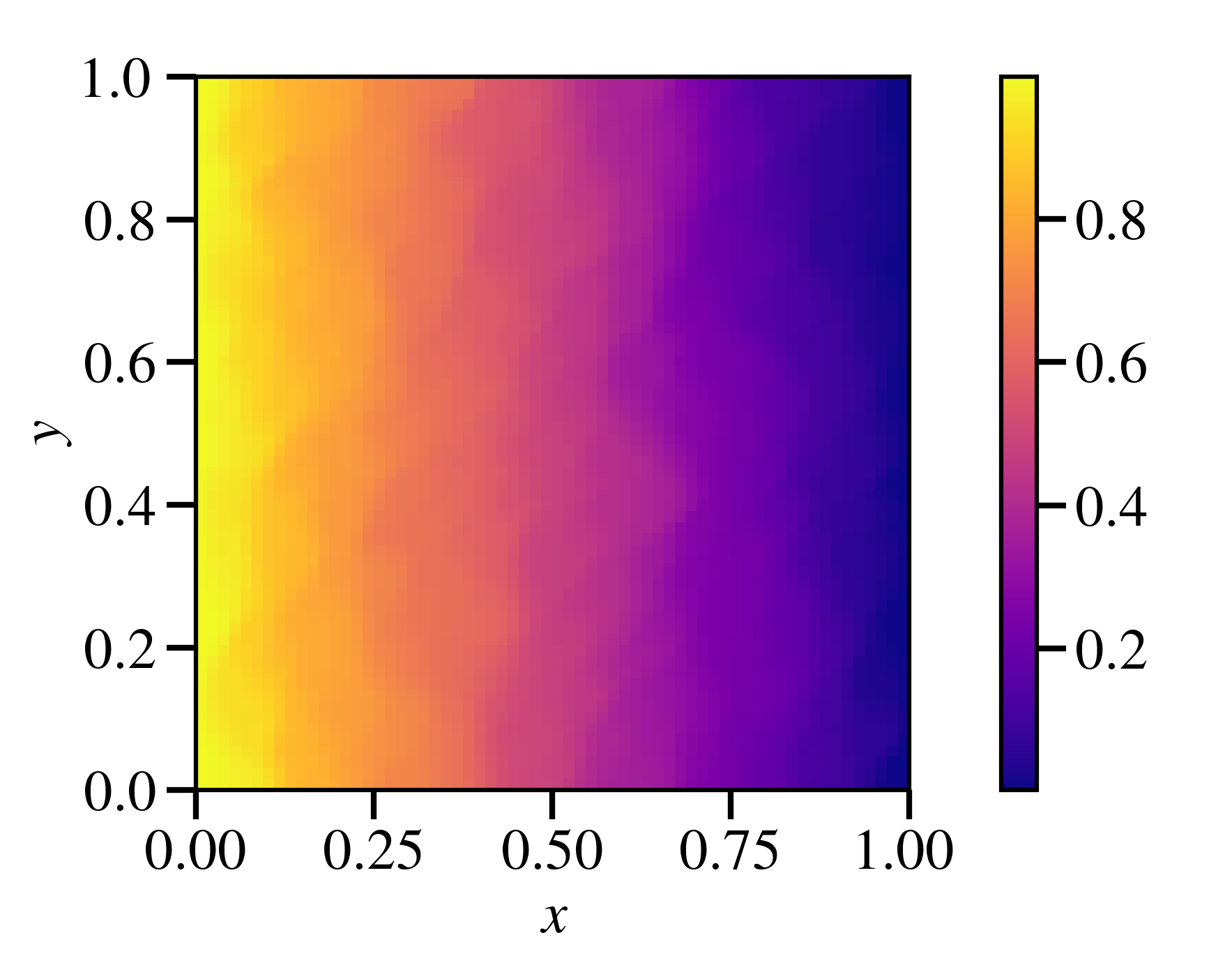}
        {\small (d) Case~2, $64\times64$ mesh}
    \end{minipage}

    \caption{
    Pressure fields computed using the same continuous reconstructed permeability
    $K^*(\bx)$ on different mesh resolutions.
    Top row: Case~1 permeability field. Bottom row: Case~2 permeability field.
    Only the mesh resolution changes; the permeability model is fixed.
    }
    \label{fig:pressure_multires_Kstar}
\end{figure}

Further, we study how the corresponding pressure solution behaves under mesh refinement. For a sequence of uniform meshes with mesh size $h=1/N_x$, we solve the Darcy problem given in \eqref{eq:darcy_model} with $K^*(\bx)$, 
and compare against the reference pressure $p_h$ computed on a fine
$64\times64$ mesh using the true permeability.

Figures~\ref{fig:conv_perlin} and~\ref{fig:conv_box} show the relative $L^2$
pressure error versus $h$ for Case~1 and Case~2. In both cases, the error
decreases approximately linearly,
$$
E_p^{\mathrm{rel}}(h)\approx C\,h,
$$
indicating approximately first-order convergence of the pressure solution. This behavior
confirms that the continuous recovered permeability $K^*(\bx)$ yields a stable
and convergent Darcy solver across mesh refinements.

\begin{figure}[!h]
    \centering
    \begin{subfigure}[!h]{0.48\textwidth}
        \centering
        \includegraphics[width=\textwidth]{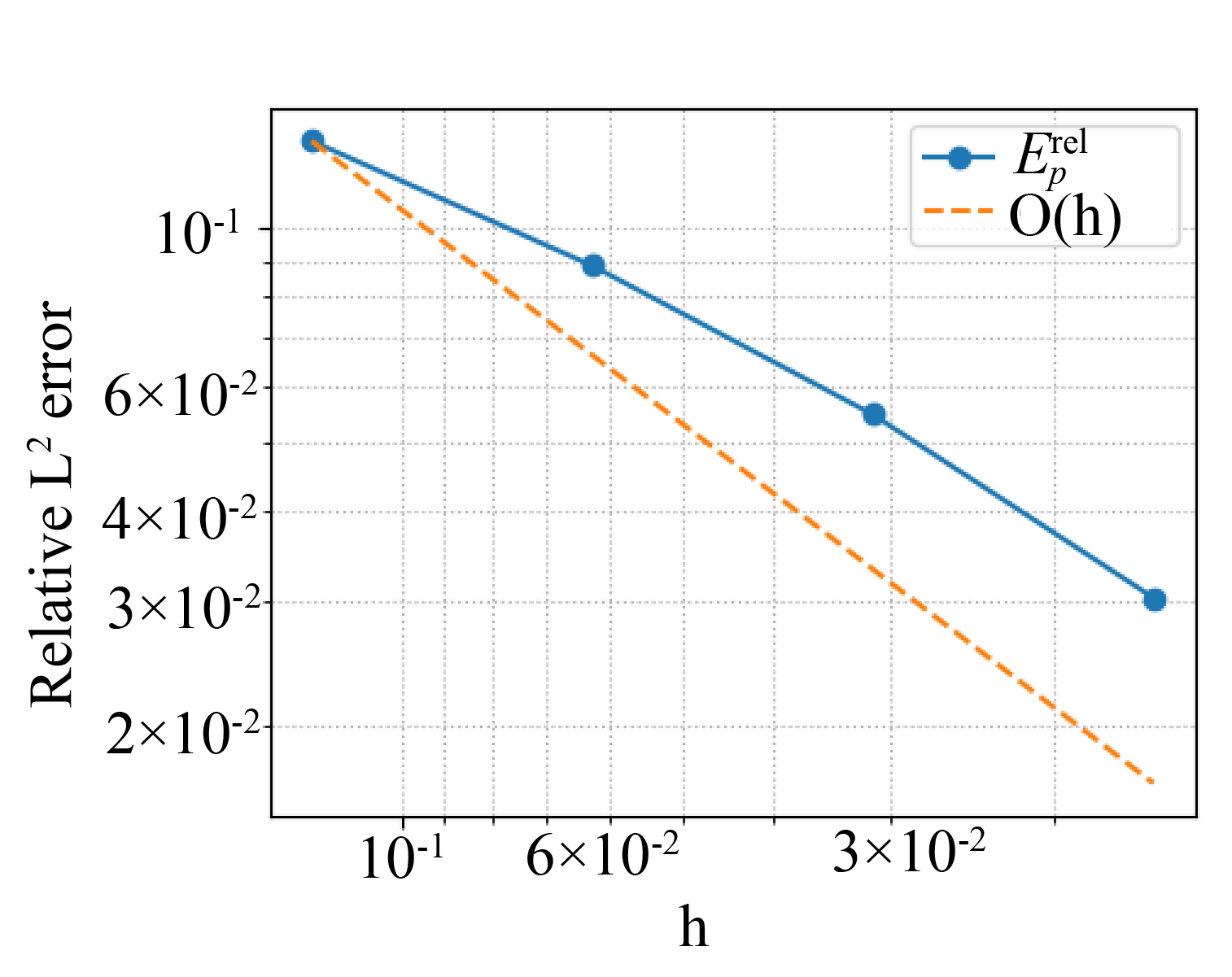}
        \caption{}
        \label{fig:conv_perlin}
    \end{subfigure}
    \hfill
    \begin{subfigure}[!h]{0.48\textwidth}
        \centering
        \includegraphics[width=\textwidth]{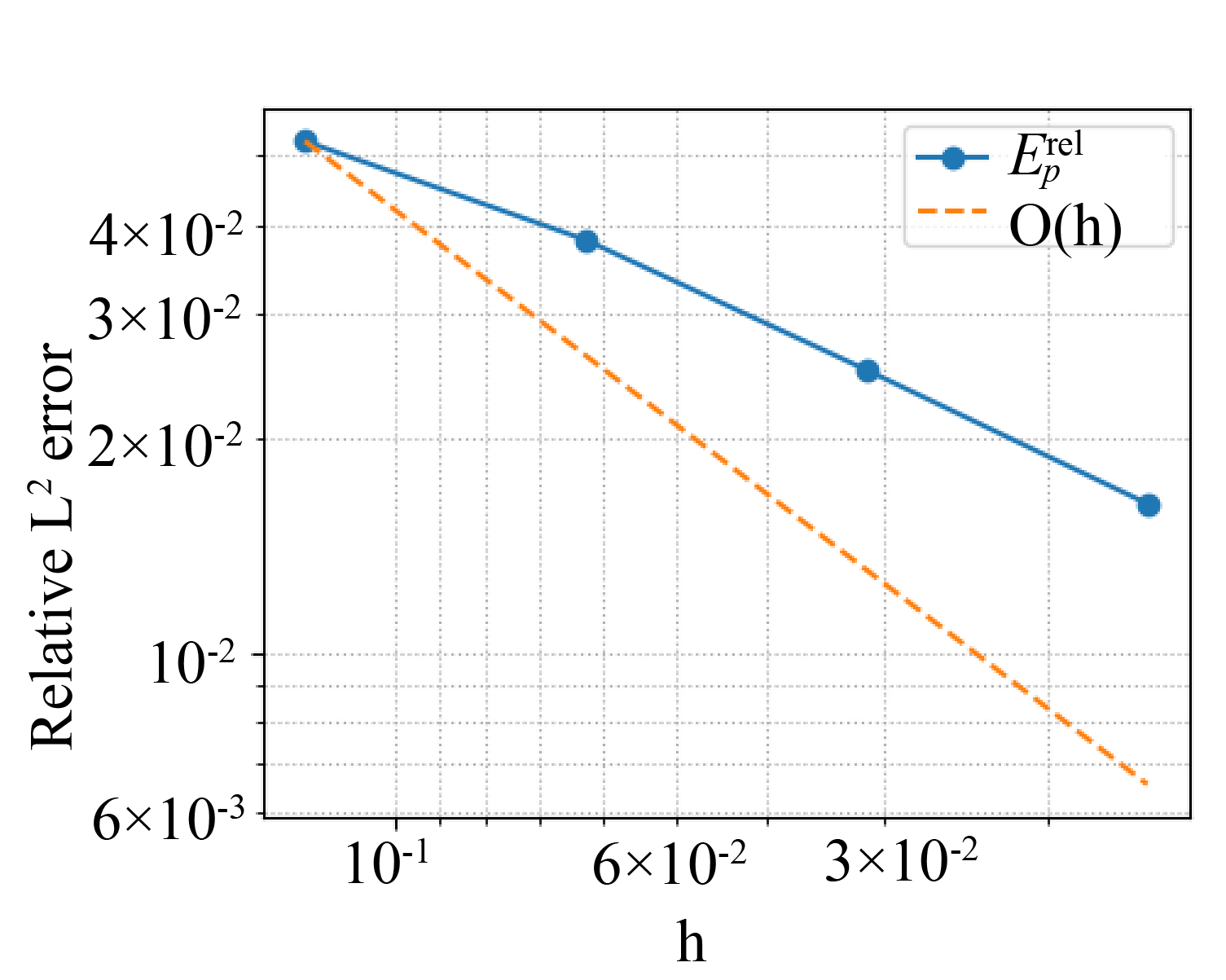}
        \caption{}
        \label{fig:conv_box}
    \end{subfigure}

    \caption{ Convergence of the pressure solution obtained using the continuous reconstructed
permeability $K^*(\bx)$ under mesh refinement:
(a) Case~1 permeability;
(b) Case~2 permeability.
}

    \label{fig:conv_both}
\end{figure}

\subsection{Experiment 2. SPE10 Benchmark Permeability Field}
\label{sec:application_spe10}

We now apply the proposed adaptive RBF framework with Elastic Net regression to a large-scale, highly heterogeneous permeability field taken from the SPE10 benchmark. Specifically, we consider a two-dimensional horizontal slice of the SPE10 model,
discretized on a $60 \times 220$ rectangular grid.  
The reference permeability $K(\bx)$ is provided as a cellwise constant field with
strong contrast and multiscale channelized structures, making this benchmark a
challenging test for coefficient recovery methods.

Due to the size and heterogeneity of the domain, the computational domain
$\Omega$ is partitioned into multiple nonoverlapping subdomains
$\{\omega_i\}$. On each subdomain, the permeability is reconstructed independently using
Algorithm~\ref{algo:adaptive}. In this experiment, we set $K_{\mathrm{top}} = 660$ per round. The initial kernel width is set to $\sigma=0.00159$.

Figure~\ref{fig:spe10_perm_truth} shows the reference SPE10 permeability field $K(\bx)$, while Figure~\ref{fig:spe10_perm_kstar} presents its continuous reconstruction $K^*(\bx)$ obtained using the proposed method. The reconstructed field accurately reproduces the dominant high- and low-permeability
regions and captures the structures present in the benchmark data. These results indicate that the proposed approach generalizes beyond synthetic
test problems to standard benchmark datasets.

\begin{figure}[ttbp]
    \centering

    \begin{subfigure}[!h]{0.49\textwidth}
        \centering
        \includegraphics[width=\linewidth]{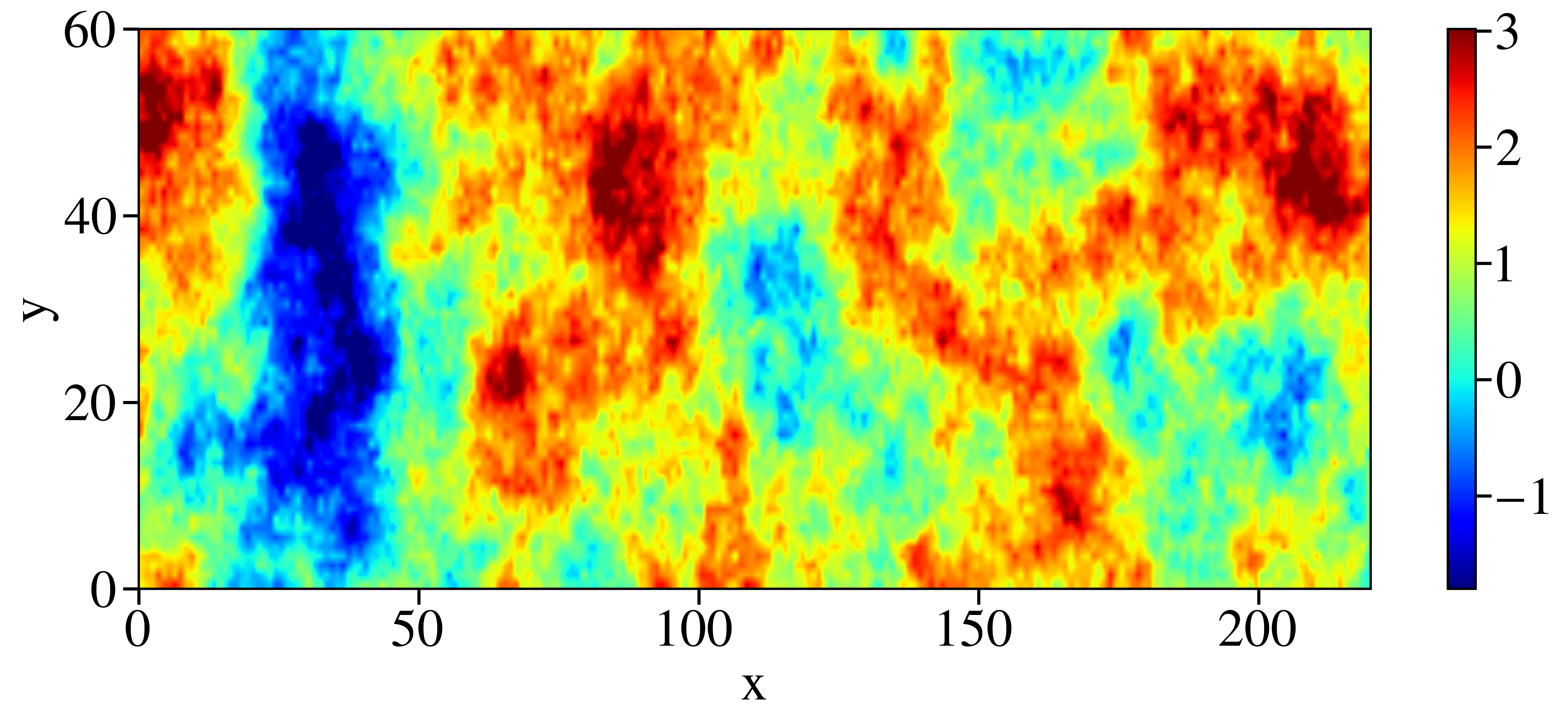}
        \caption{}
        \label{fig:spe10_perm_truth}
    \end{subfigure}
    \hfill
    \begin{subfigure}[!h]{0.49\textwidth}
        \centering
        \includegraphics[width=\linewidth]{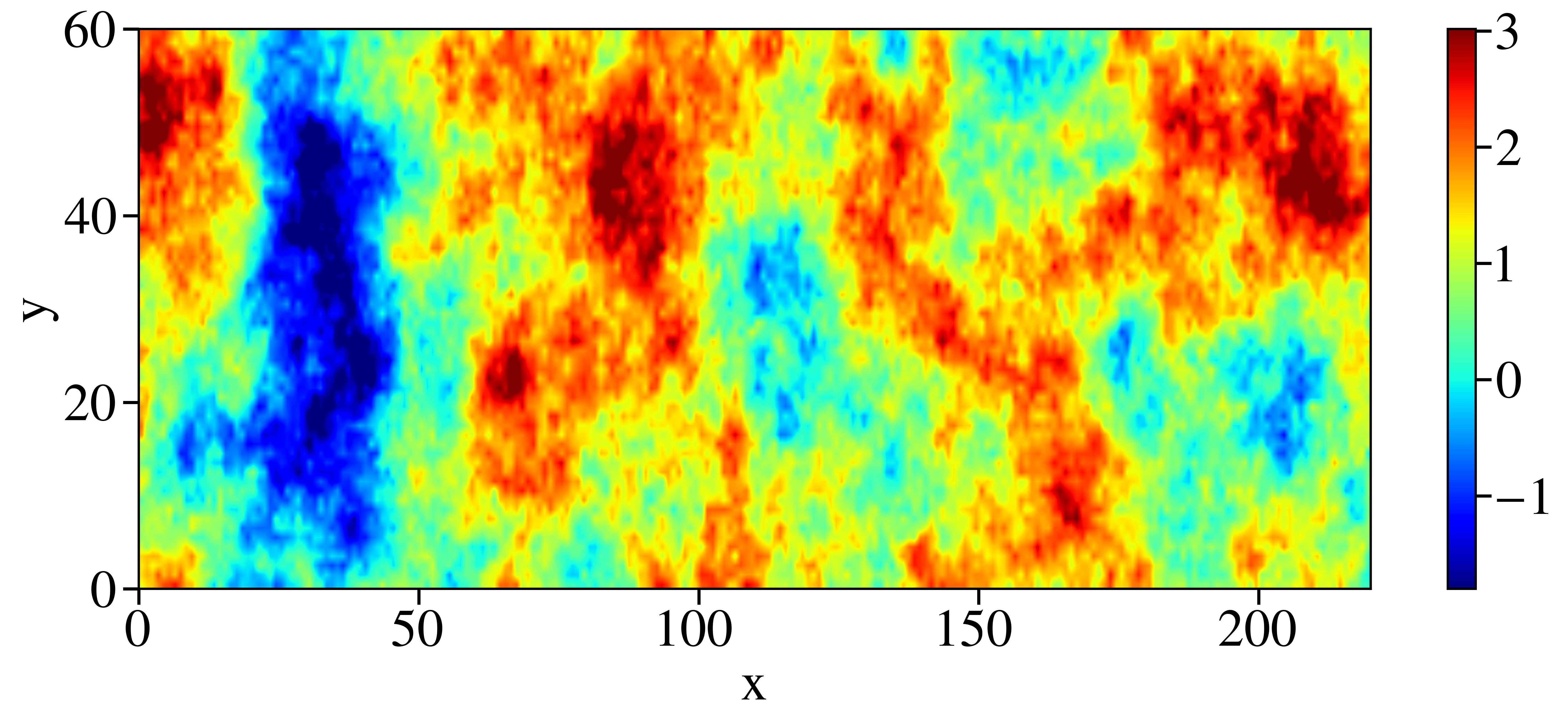}
        \caption{}
        \label{fig:spe10_perm_kstar}
    \end{subfigure}

    \caption{Experiment 2. Permeability fields from the SPE10 dataset on the $60 \times 220$ grid shown in $\log_{10}$ scale. 
    (a) Original permeability data $K(\bx)$. 
    (b) Continuous reconstruction $K^*(\bx)$ obtained using the PAM framework.}
    
    \label{fig:spe10_permeability}
\end{figure}

\begin{figure}[ttbp]
    \centering

    \begin{subfigure}[!h]{0.49\textwidth}
        \centering
        \includegraphics[width=\linewidth]{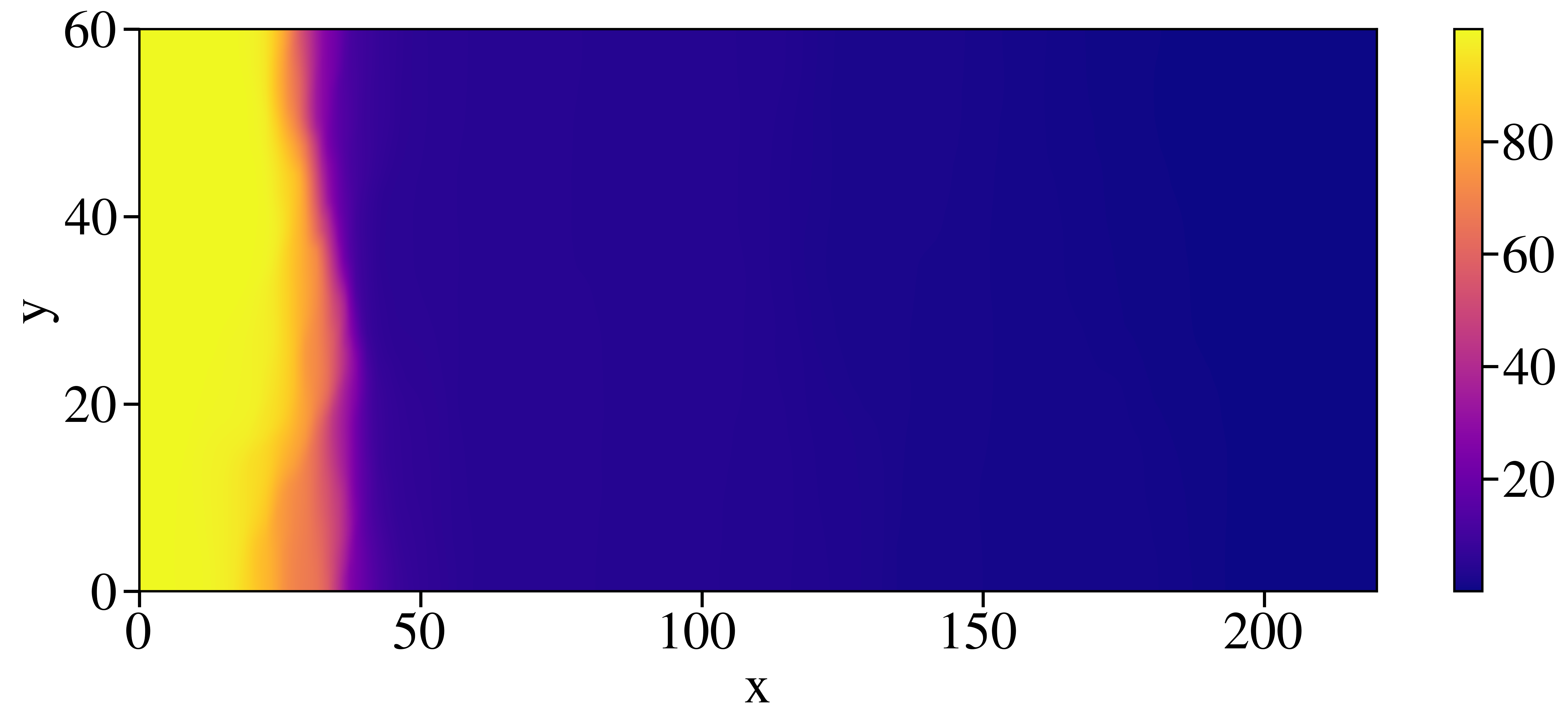}
        \caption{}
        \label{fig:spe10_pressure_true}
    \end{subfigure}
    \hfill
    \begin{subfigure}[!h]{0.49\textwidth}
        \centering
        \includegraphics[width=\linewidth]{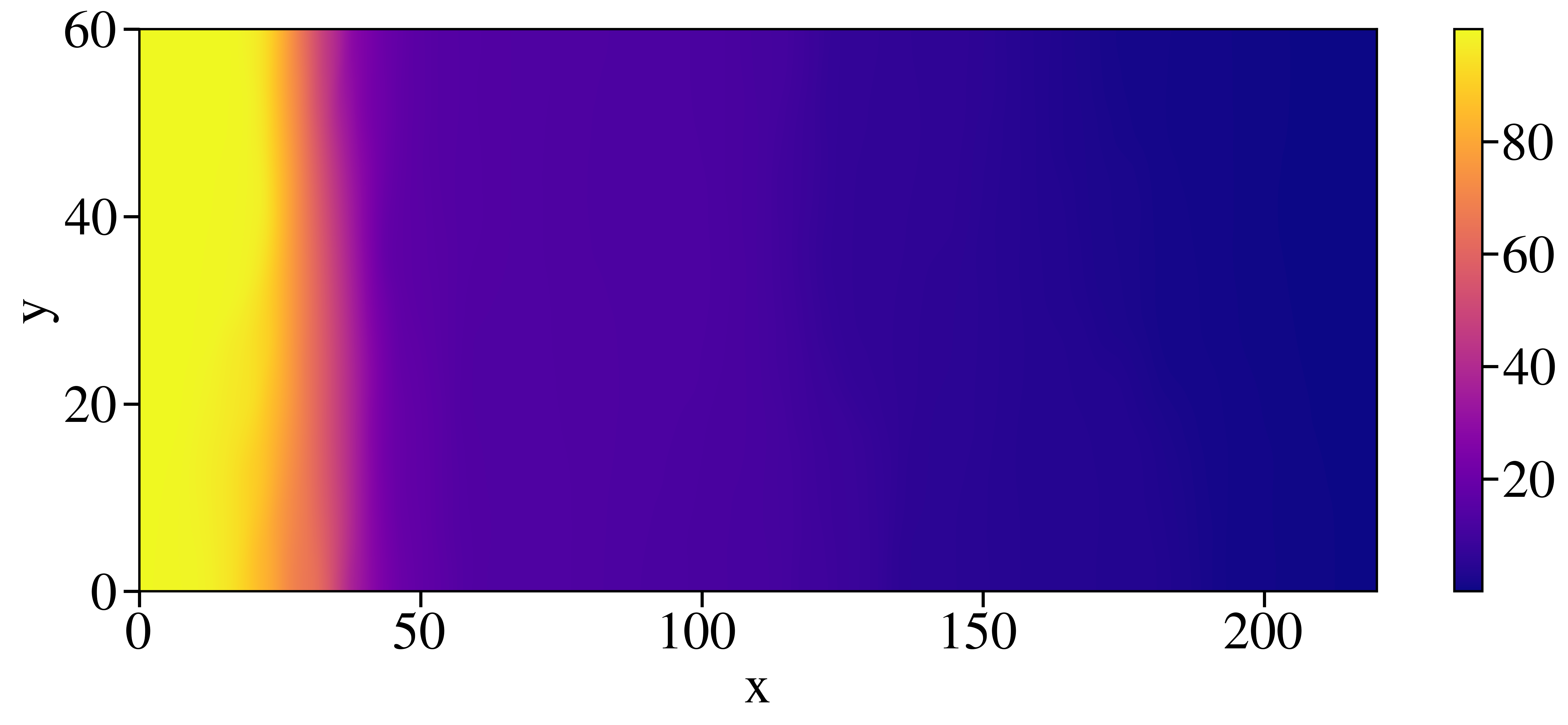}
        \caption{}
        \label{fig:spe10_pressure_kstar}
    \end{subfigure}

    \caption{Experiment 2. Pressure solutions obtained on the $60 \times 220$ grid using the SPE10 permeability data. 
    (a) Solution computed with the original permeability data $K(\bx)$. 
    (b) Solution computed with the reconstructed continuous permeability $K^*(\bx)$. 
    }

    \label{fig:spe10_pressure}
\end{figure}

To study the impact of the reconstructed permeability on flow predictions, we solve the steady-state Darcy problem (\ref{eq:darcy_model}) on the rectangular domain $\Omega$ associated with the SPE10 slice. Dirichlet boundary conditions are imposed on the inflow and outflow boundaries,
with $p=100$ at $x=0$ and $p=0$ at $x=220$, while homogeneous Neumann (no-flow) conditions are applied on the remaining boundaries. This configuration induces a predominantly left-to-right flow and is commonly
used in SPE10 benchmarking studies. Figures~\ref{fig:spe10_pressure_true} and~\ref{fig:spe10_pressure_kstar} compare the pressure fields obtained using the reference permeability $K(\bx)$ and the reconstructed permeability $K^*(\bx)$. The relative $L^2(\Omega)$ error between the corresponding pressure solutions
is approximately $1.8\times10^{-1}$.

To further examine the impact of spatial discretization on the pressure solution obtained from the reconstructed permeability, we next consider pressure fields computed on coarser and finer meshes using the same continuous permeability $K^*(\bx)$.
Figure~\ref{fig:pstar_multires} illustrates how mesh resolution affects the sharpness of the pressure transition and the presence of numerical diffusion, while preserving the overall flow structure.

\begin{figure}[ttbp]
    \centering

    \begin{subfigure}[!h]{0.49\textwidth}
        \centering
        \includegraphics[width=\linewidth]{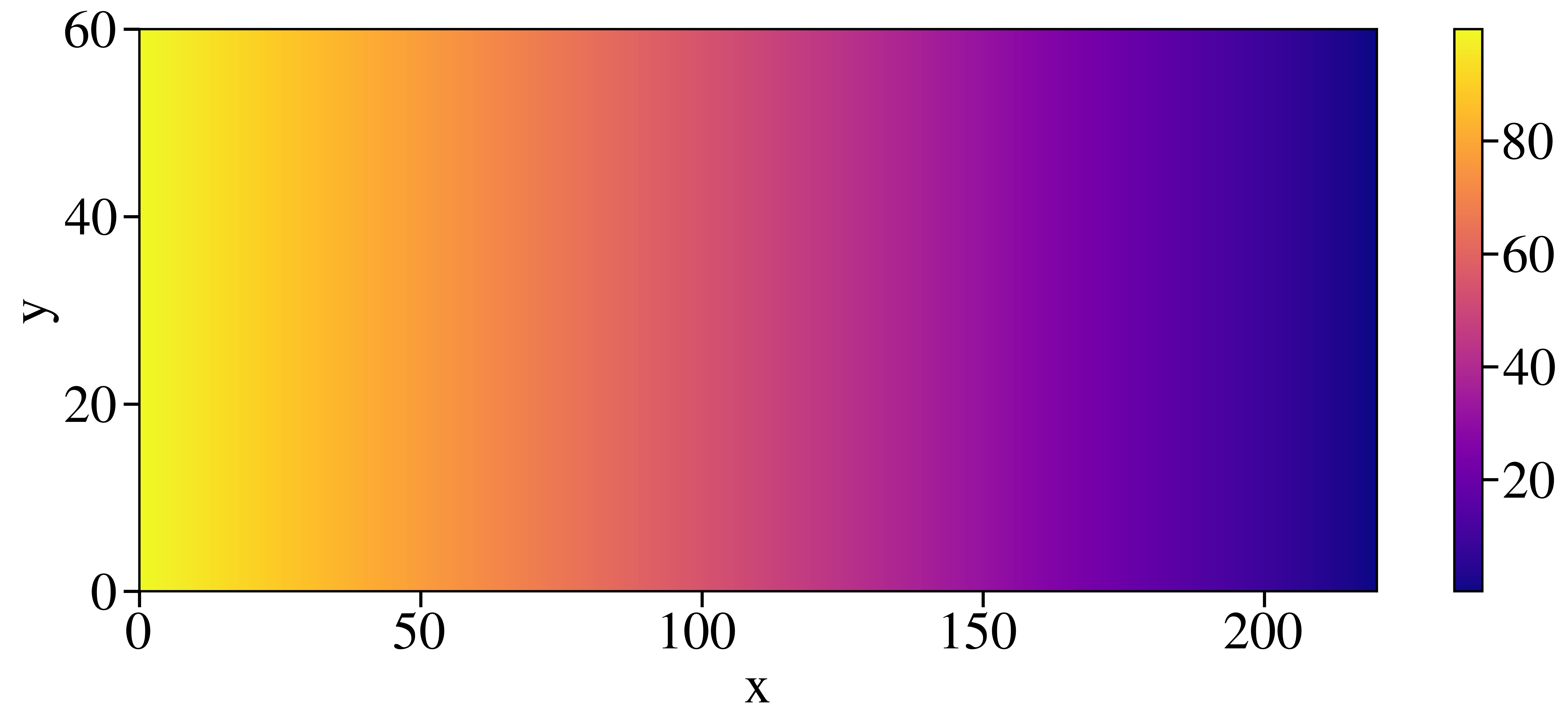}
        \caption{}
        \label{fig:spe10_pressure_coarse}
    \end{subfigure}
    \hfill
    \begin{subfigure}[!h]{0.49\textwidth}
        \centering
        \includegraphics[width=\linewidth]{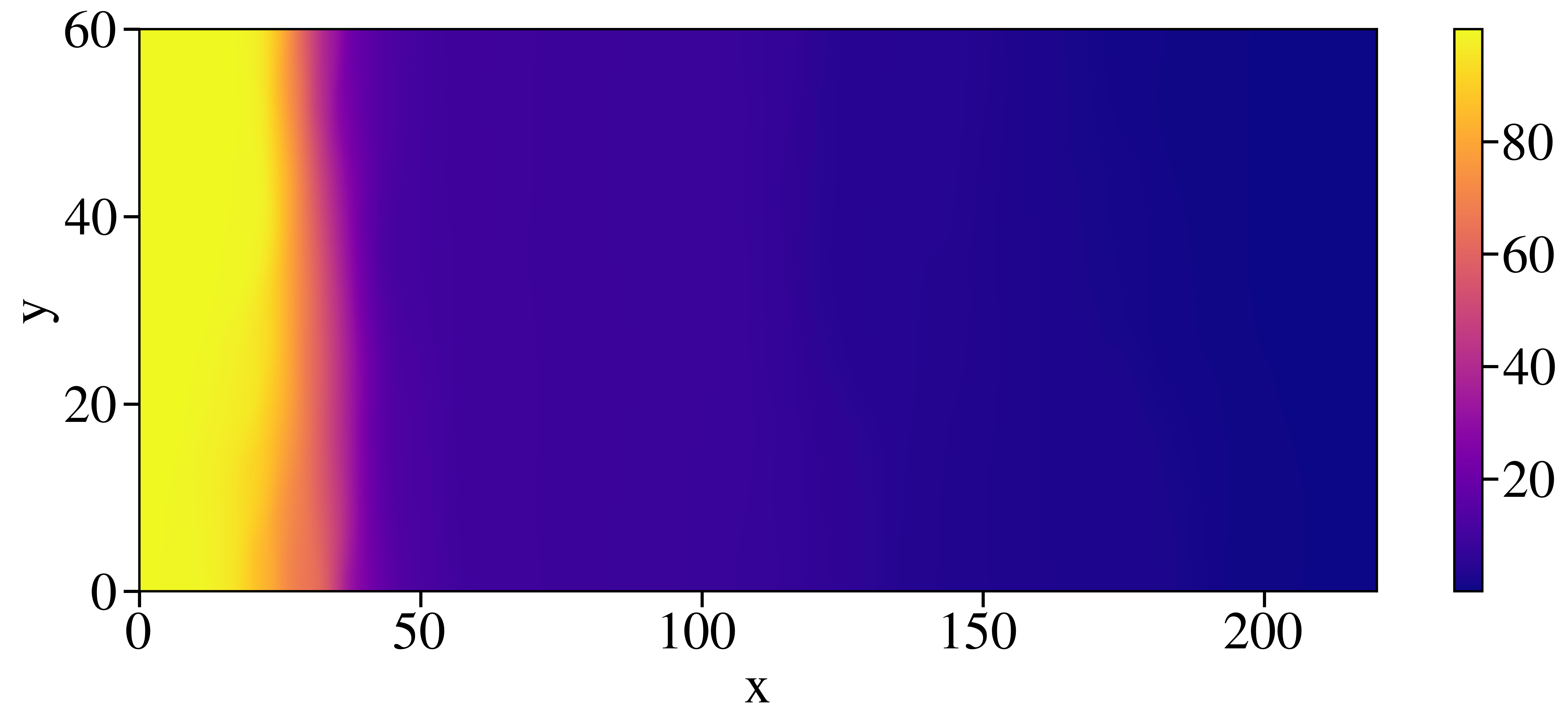}
        \caption{}
        \label{fig:spe10_pressure_fine}
    \end{subfigure}

    \caption{Experiment 2. Visualization of pressure fields on two different mesh resolution.
    (a): pressure $p^*(\bx)$ obtained on mesh of size $110\times30$.
    (b): pressure $p^*(\bx)$ obtained on mesh of size $440\times120$. 
    }

    \label{fig:pstar_multires}
\end{figure}

\subsubsection{Pressure computation on different meshes and geometries using the continuous recovered permeability}

The reconstructed permeability $K^*(\bx)$ provides a continuous representation of the originally piecewise constant SPE10 data. As a result, $K^*(\bx)$ can be evaluated on any computational mesh, regardless of resolution or geometry. To demonstrate this flexibility, we solve the Darcy pressure equation \ref{eq:darcy_model} using $K^*(\bx)$ on three representative configurations involving different meshes and domain geometries.

We first consider the original rectangular domain and solve the pressure on a non-uniform mesh obtained by introducing local refinement in selected regions. Figure~\ref{fig:geom1} shows the resulting mesh and the corresponding pressure field. The solution naturally adapts to the locally refined regions while maintaining a consistent global flow pattern. 

\begin{figure}[!h]
\centering
\begin{subfigure}[!h]{0.5\textwidth}
    \centering
    \includegraphics[width=\linewidth]{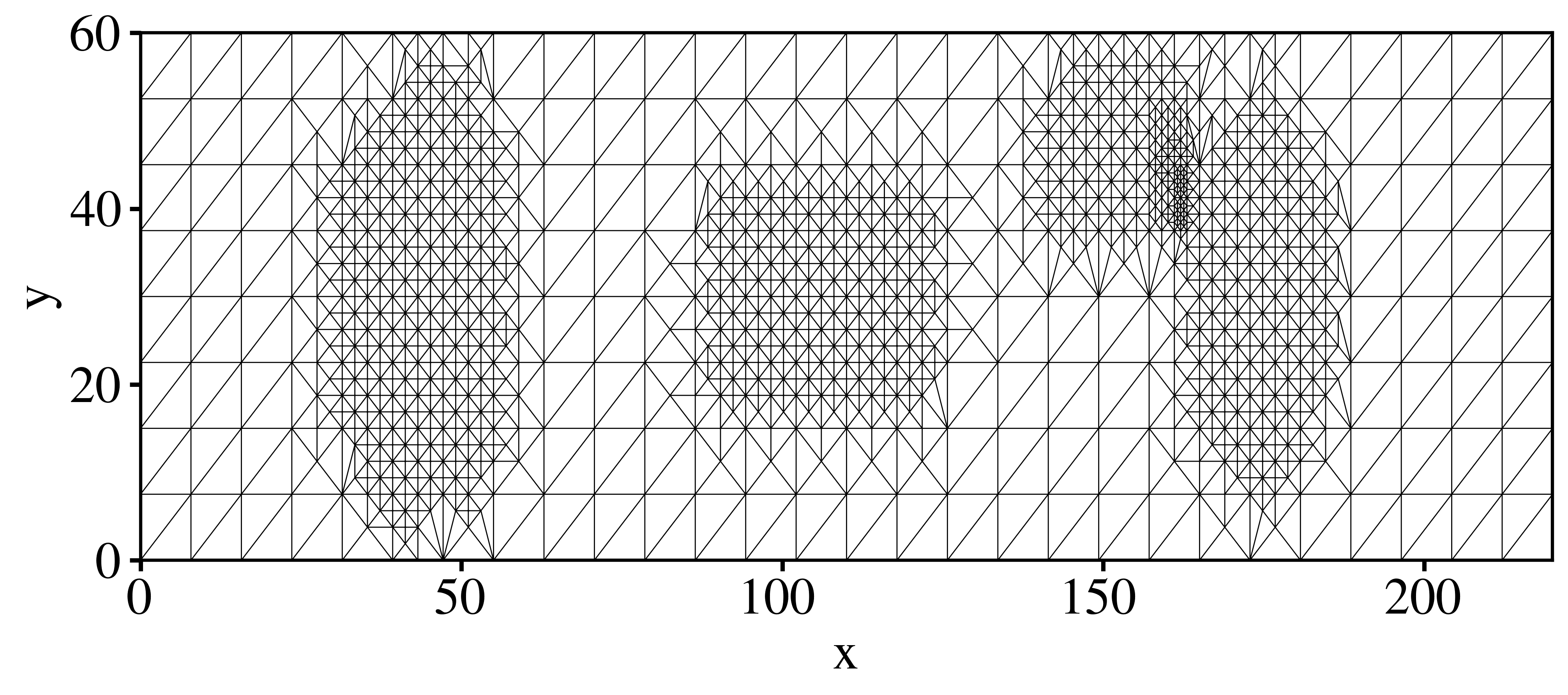}
    \caption{}
\end{subfigure}
\hfill
\begin{subfigure}[!h]{0.48\textwidth}
    \centering
    \includegraphics[width=\linewidth]{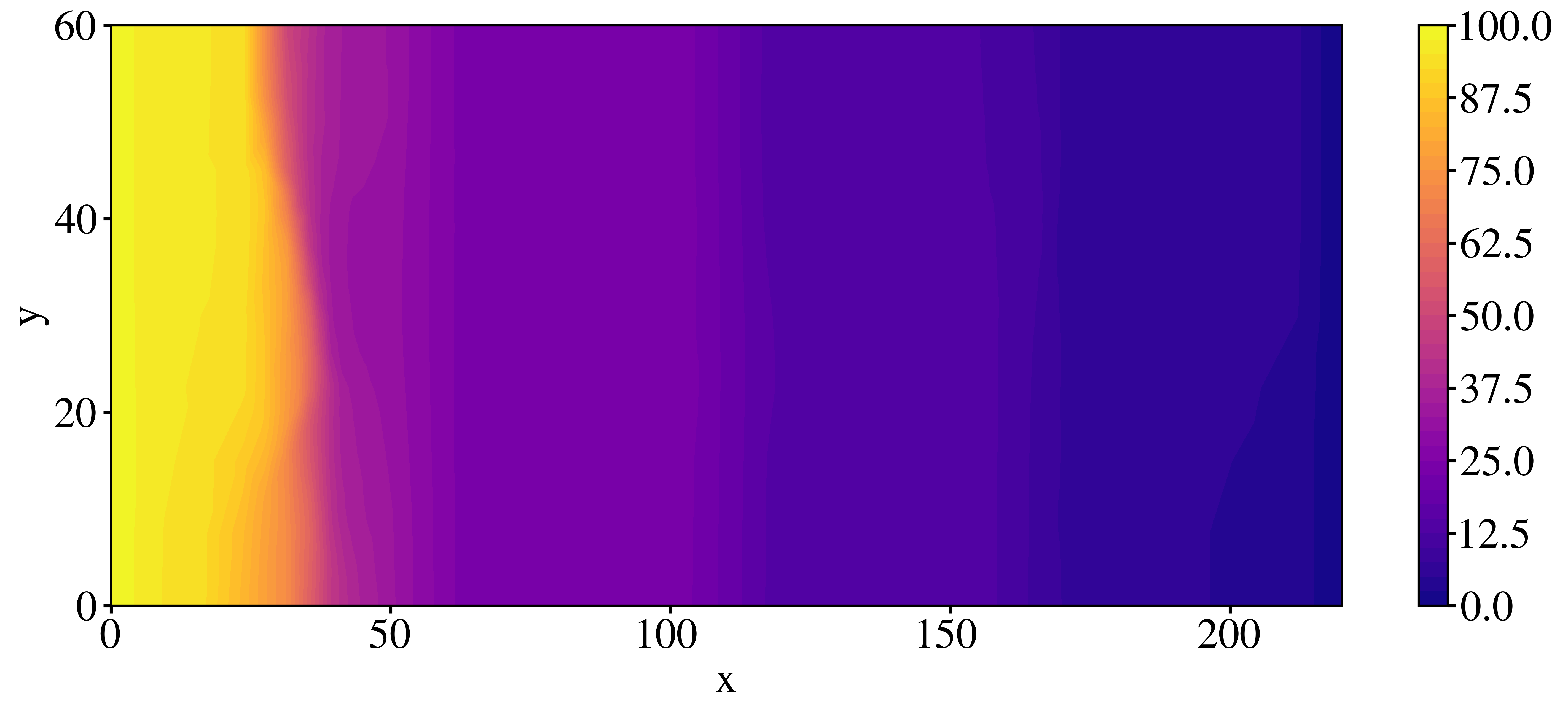}
    \caption{}
\end{subfigure}
\caption{
Geometry 1: Rectangular domain with locally refined mesh.
(a) Computational mesh.
(b) Pressure field $p^*(\bx)$ computed using $K^*(\bx)$.
}
\label{fig:geom1}
\end{figure}

Next, we consider a different domain geometry. The same permeability field $K^*(\bx)$ is evaluated directly on this domain. Figure~\ref{fig:geom2} presents the mesh and the corresponding pressure solution. The flow field adapts to the domain configuration while remaining physically consistent, illustrating that the continuous representation of $K^*(\bx)$ enables direct simulation on different geometries without modification of the permeability data.

\begin{figure}[ttbp]
\centering
\begin{subfigure}[!h]{0.5\textwidth}
    \centering
    \includegraphics[width=\linewidth]{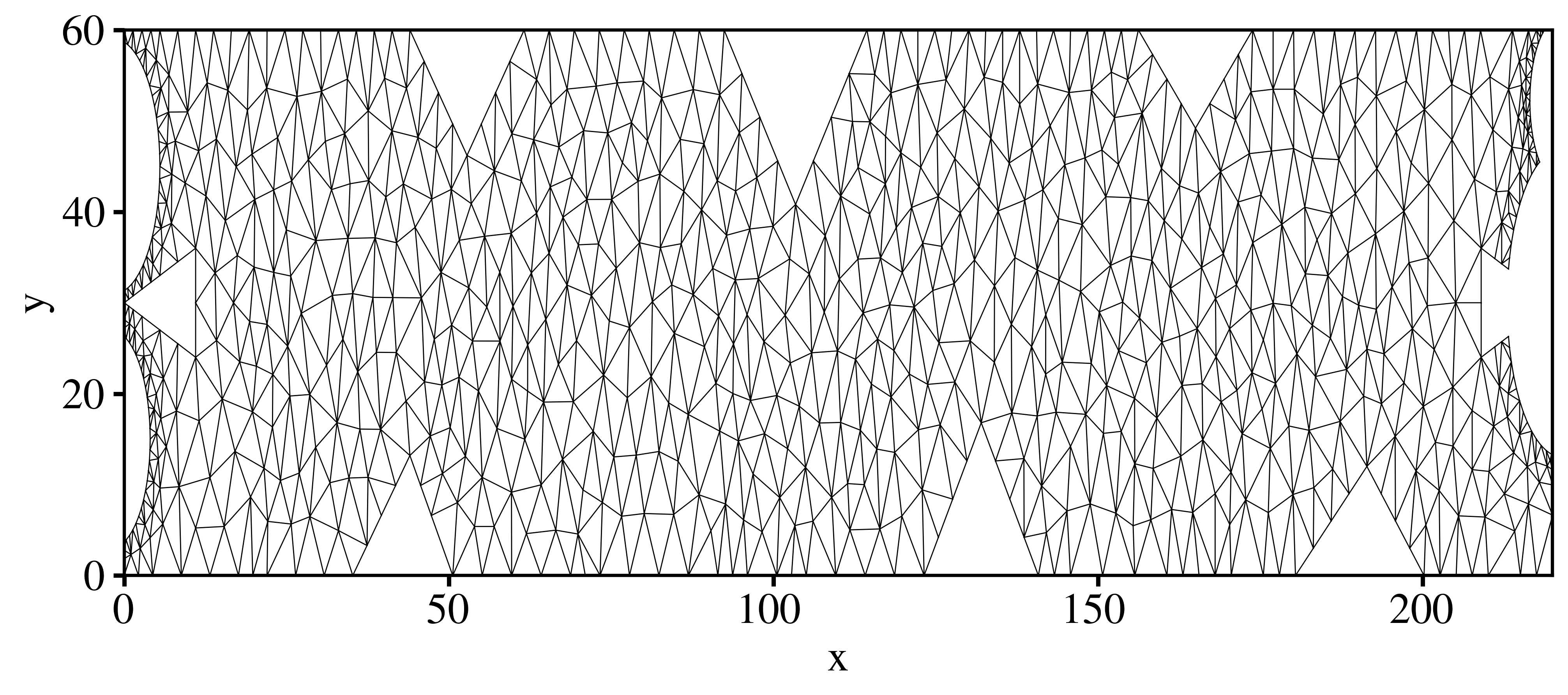}
    \caption{}
\end{subfigure}
\hfill
\begin{subfigure}[!h]{0.48\textwidth}
    \centering
    \includegraphics[width=\linewidth]{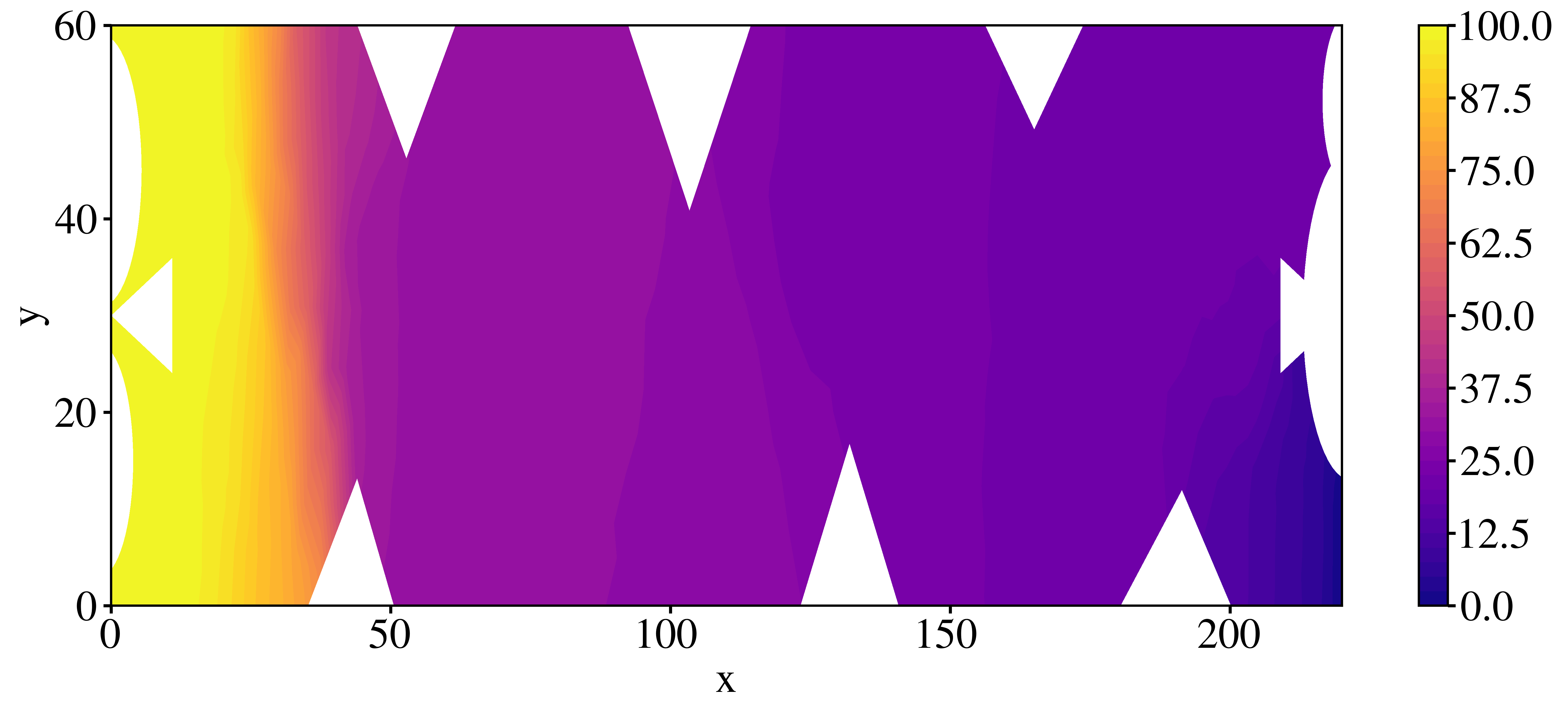}
    \caption{}
\end{subfigure}
\caption{
Geometry 2: Irregular domain with boundary cuts.
(a) Computational mesh.
(b) Pressure field $p^*(\bx)$ computed using $K^*(\bx)$.
}
\label{fig:geom2}
\end{figure}

Finally, we consider a rectangular domain containing multiple circular holes. Dirichlet boundary conditions are imposed on the left and right boundaries, while homogeneous Neumann conditions are enforced on the boundaries of the holes. Figure~\ref{fig:geom3} shows the mesh and the resulting pressure field. The solution captures the flow behavior around the circular obstacles, with streamlines bending around the inclusions. 

\begin{figure}[!h]
\centering
\begin{subfigure}[!h]{0.5\textwidth}
    \centering
    \includegraphics[width=\linewidth]{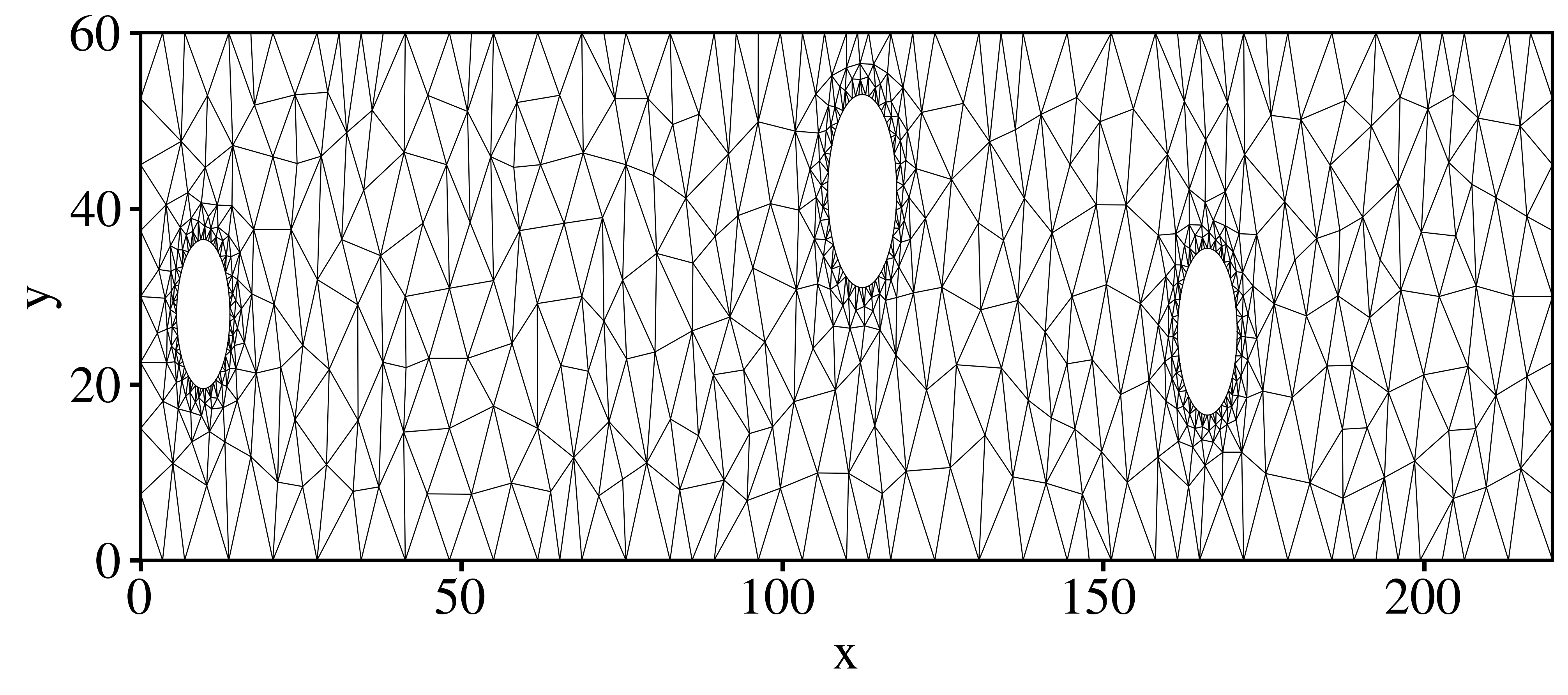}
    \caption{}
\end{subfigure}
\hfill
\begin{subfigure}[!h]{0.48\textwidth}
    \centering
    \includegraphics[width=\linewidth]{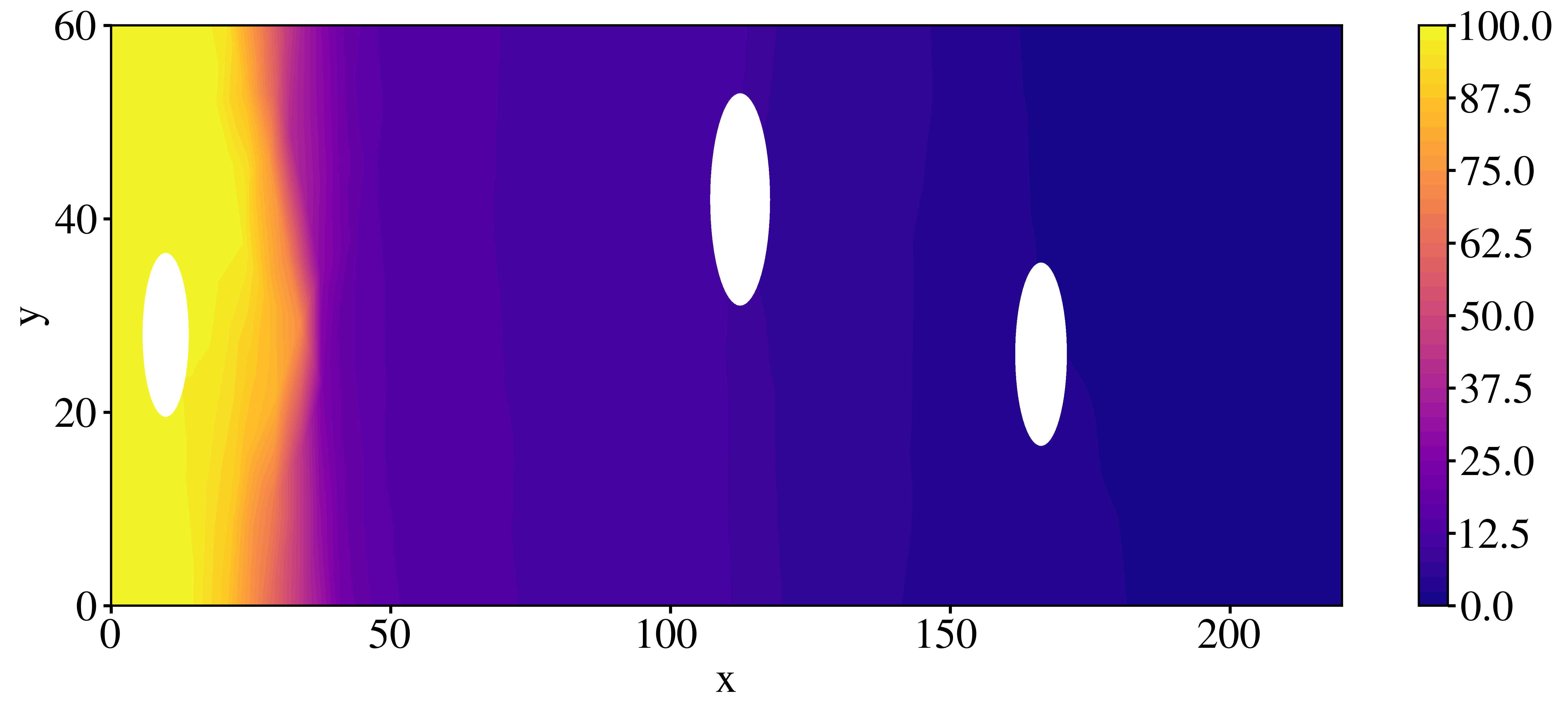}
    \caption{}
\end{subfigure}
\caption{
Geometry 3: Rectangular domain with circular holes.
(a) Computational mesh.
(b) Pressure field $p^*(\bx)$ computed using $K^*(\bx)$.
}
\label{fig:geom3}
\end{figure}

Overall, these examples show that the continuous permeability $K^*(\bx)$ can be used directly on different meshes and geometries, enabling flexible simulations with standard finite element methods.

\section{Conclusion}

In this work, we introduced PAM, a parallel and adaptive mesh-free framework for constructing continuous, closed-form approximations of discontinuous permeability data. The method combines localized radial basis function (RBF) representations, Shepard normalization, and sparse regression to accurately approximate piecewise constant fields while preserving sharp transitions.

The proposed approach is inherently mesh-independent: once constructed, the resulting surrogate can be evaluated consistently across different geometries, discretizations, and numerical schemes without the need for repeated interpolation or projection. The use of localized basis functions together with residual-driven adaptive refinement enables efficient resolution of discontinuities without requiring uniform enrichment across the domain.

Numerical experiments, including challenging heterogeneous permeability fields such as the SPE10 benchmark, demonstrate that the method captures sharp interfaces, achieves accurate approximations across multiple resolutions, and scales effectively through parallel subdomain computations.

Overall, PAM provides a flexible and scalable framework for approximating discontinuous fields by continuous representations, offering a robust alternative to traditional interpolation and mesh-dependent approaches in heterogeneous porous media.

\section*{Acknowledgments}
The work of K. Chawla and S. Lee was supported by the U.S. Department of Energy, Office of Science, Energy Earthshots Initiatives under Award Number DE-SC 0024703.

\bibliographystyle{elsarticle-num} 
\bibliography{references}
\end{document}